\tikzset{inner sep=0pt, 
  root/.style={circle,draw,minimum size=7pt,thick}, 
  fatroot/.style={circle,draw,minimum size=10pt,thick}, 
  short root/.style={circle,fill,minimum size=7pt}, 
  doublearrow/.style={postaction={decorate}, 
  decoration={markings,mark=at position .7
  with {\arrow{angle 60}}},double distance=3pt,thick}
} 
\theoremstyle{plain}
\newtheorem{theorem}{Theorem}[section]
\newtheorem{lemma}[theorem]{Lemma}
\newtheorem{proposition}[theorem]{Proposition}
\newtheorem{definition}[theorem]{Definition}
\newtheorem{conjecture}[theorem]{Conjecture}
\newtheorem{principle}[theorem]{Principle}
\theoremstyle{remark}
\numberwithin{equation}{section}
\numberwithin{paragraph}{section}
\DeclareMathOperator{\Hom}{Hom}
\DeclareMathOperator{\Ad}{Ad}
\DeclareMathOperator{\Cent}{Cent}
\DeclareMathOperator{\rank}{rank}
\DeclareMathOperator{\Gal}{Gal}
\DeclareMathOperator{\Aut}{Aut}
\DeclareMathOperator{\Res}{Res}
\DeclareMathOperator{\Sp}{Sp}
\DeclareMathOperator{\Isom}{Isom}
\DeclareMathOperator{\Ind}{Ind}
\DeclareMathOperator{\tr}{tr}
\DeclareMathOperator{\diag}{diag}
\DeclareMathOperator{\Frob}{Frob}
\DeclareMathOperator{\Sh}{Sh}
\DeclareMathOperator{\Spec}{Spec}
\DeclareMathOperator{\Mot}{Mot}
\newcommand{\cF}{{\mathcal F}}
\newcommand{\cH}{{\mathcal H}}
\newcommand{\cO}{{\mathcal O}}
\newcommand{\cV}{{\mathcal V}}
\newcommand{\fra}{{\mathfrak a}}
\newcommand{\frg}{{\mathfrak g}}
\newcommand{\frh}{{\mathfrak h}}
\newcommand{\frl}{{\mathfrak l}}
\newcommand{\ffrm}{{\mathfrak m}}
\newcommand{\frn}{{\mathfrak n}}
\newcommand{\frs}{{\mathfrak s}}
\newcommand{\frt}{{\mathfrak t}}
\newcommand{\frz}{{\mathfrak z}}
\newcommand{\bbA}{{\mathbb A}}
\newcommand{\bbC}{{\mathbb C}}
\newcommand{\bbG}{{\mathbb G}}
\newcommand{\bbQ}{{\mathbb Q}}
\newcommand{\bbR}{{\mathbb R}}
\newcommand{\bbZ}{{\mathbb Z}}
\newcommand{\GL}{\mathrm{GL}}
\newcommand{\PGL}{\mathrm{PGL}}
\newcommand{\SL}{\mathrm{SL}}
\newcommand{\ol}{\overline}
\newcommand{\ul}{\underline}
\newcommand{\sub}{\subseteq}
\newcommand{\ms}{\mathscr}
\title{On subquotients of the \'etale cohomology of Shimura varieties}
\author{Christian Johansson and Jack A. Thorne}
\begin{document}
\maketitle

\section{Introduction}\label{sec_introduction}

Let $L$ be a number field, and let $\pi$ be a cuspidal automorphic representation of $\GL_n(\bbA_L)$. Suppose that $\pi$ is $L$-algebraic and regular. By definition, this means that for each place $v | \infty$ of $L$, the Langlands parameter
\[ \phi_v : W_{L_v} \to \GL_n(\bbC) \]
of $\pi_v$ has the property that, up to conjugation, $\phi_v|_{\bbC^\times}$ is of the form $z \mapsto z^\lambda \overline{z}^\mu$ for regular cocharacters $\lambda, \mu$ of the diagonal torus of $\GL_n$. In this case we can make, following \cite{Clo90} and \cite{Buz14}, the following conjecture:
\begin{conjecture}\label{conj_galois}
For any prime $p$ and any isomorphism $\iota : \overline{\bbQ}_p \cong \bbC$, there exists a continuous, semisimple representation $r_{p,\iota}(\pi) : \Gamma_L \to \GL_n(\overline{\bbQ}_p)$ satisfying the following property: for all but finitely many finite places $v$ of $L$ such that $\pi_v$ is unramified, $r_{p, \iota}(\pi)|_{\Gamma_{L_v}}$ is unramified and the semisimple conjugacy class of $r_{p, \iota}(\pi)(\Frob_v)$ is equal to the Satake parameter of $\iota^{-1} \pi_v$.
\end{conjecture}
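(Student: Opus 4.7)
The plan is to reduce Conjecture~\ref{conj_galois} to the case where $\pi$ is regular \emph{C-algebraic} (cohomological) over a CM field, and there to extract $r_{p,\iota}(\pi)$ from a subquotient of the \'etale cohomology of a Shimura variety. The first step is to absorb the discrepancy between L- and C-algebraic normalisations for $\GL_n$ with $n$ even: one seeks an algebraic Hecke character $\psi$ on $L$ whose infinity type makes $\pi \otimes \psi$ regular C-algebraic. Since no such $\psi$ need exist over a general number field, I would first perform solvable cyclic base change (Arthur--Clozel) to a CM extension $L'/L$, over which the required characters do exist. Each cuspidal constituent of $\pi_{L'}$ remains regular L-algebraic, and one solves the conjecture there; a patching/descent argument using linear independence of traces of Frobenius across solvable extensions then returns a Galois representation over $L$ with the required unramified compatibility.

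The heart of the matter is to construct $r_{p,\iota}$ for a regular algebraic cuspidal automorphic representation $\Pi$ of $\GL_n(\bbA_{L'})$ over the CM field $L'$. When $\Pi$ is conjugate self-dual, one realises $r_{p,\iota}(\Pi)$ in the middle-degree intersection cohomology of a unitary Shimura variety, as in the work of Kottwitz, Clozel, and Harris--Taylor. In the general, non-polarisable case the construction of Harris--Lan--Taylor--Thorne and Scholze $p$-adically interpolates $\Pi$ by conjugate self-dual cohomological representations $\Pi_m$ --- built using an auxiliary $\GL_1$ factor via algebraic Hecke characters, or by induction to $\GL_{2n}$ --- whose Galois representations exist by the polarisable case, and obtains $r_{p,\iota}(\Pi)$ as a $p$-adic limit of the $r_{p,\iota}(\Pi_m)$. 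The refinement hinted at by the paper's title is that $r_{p,\iota}(\Pi)$ should be exhibited as a \emph{subquotient} of (possibly torsion in) the \'etale cohomology of such a Shimura variety, rather than as a direct summand: this is the appropriate framework when the Hecke eigensystem of $\Pi$ appears only virtually in the Langlands decomposition.

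The principal obstacle will be to control the Hecke eigenvalues sufficiently finely through this $p$-adic limiting process so as to guarantee that the semisimple Frobenius conjugacy classes of $r_{p,\iota}(\pi)$ match the Satake parameters of $\pi$ at almost all unramified $v$, rather than merely that their characteristic polynomials agree modulo $p^m$ for each $m$. This requires a density statement for classical points on the relevant eigenvariety together with rigidity of the assignment from Hecke eigensystems to semisimple Galois conjugacy classes; both are delicate precisely because the cohomology in question may have substantial torsion and because the eigensystem of interest may appear only in a subquotient. Making this argument rigorous --- for instance, by combining vanishing theorems for the cohomology of the boundary with a careful study of how eigensystems are cut out inside subquotients --- is where I expect the main technical difficulty to lie, and is presumably the contribution developed in the body of the paper.
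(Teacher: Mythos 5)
This statement is a \emph{conjecture} in the paper, not a theorem: the authors state it (following Clozel and Buzzard--Gee), note that it is known when $L$ is totally real or CM by \cite{Har16} and \cite{Sch16}, and offer no proof. It remains open for general number fields $L$. So there is no ``paper's own proof'' to compare against, and your proposal should be judged as a sketch of an open problem.

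As such a sketch, it has a fatal gap at the very first step. You propose to reduce to the CM case by solvable base change to a CM extension $L'/L$. But a general number field $L$ admits \emph{no} CM (or totally real) extension at all: if $L$ has at least one real and at least one complex place, then any extension of $L$ has a complex place (so is not totally real), and cannot be CM either, since a CM field is a quadratic extension of a totally real field and this structure would force $L$ itself to be totally real or CM. This is precisely why the conjecture is open outside the totally real/CM case: there is no known bridge from $\GL_n(\bbA_L)$ to Shimura varieties for mixed-signature $L$. Even granting $L$ totally real or CM, the remaining steps of your outline (the $p$-adic interpolation by conjugate self-dual representations, the control of Hecke eigenvalues through the limit, the descent through solvable extensions) are not details to be filled in but are the entire content of \cite{Har16} and \cite{Sch16}; asserting them does not constitute a proof. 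Finally, you have misread the purpose of the paper: far from refining the construction of $r_{p,\iota}(\pi)$ as a subquotient of \'etale cohomology, the paper's main thrust is the opposite --- it argues (Theorem \ref{thm_intro_non_automorphy}, conditionally on Conjectures \ref{conj_galois_reps_for_non_tempered_representations} and \ref{conj_consequence_of_Arthur_Kottwitz}) that strongly irreducible, non--conjugate-self-dual-up-to-twist Galois representations should \emph{never} appear in the intersection cohomology of Shimura varieties, while exhibiting in Theorem \ref{thm_intro_automorphy} certain induced (hence not strongly irreducible) exceptions.
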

(We note that this condition characterizes $r_{p, \iota}(\pi)$ uniquely (up to isomorphism) if it exists, by the Chebotarev density theorem.) The condition that $\pi$ is $L$-algebraic and regular implies that the Hecke eigenvalues of a twist of $\pi$ appear in the cohomology of the arithmetic locally symmetric spaces attached to the group $\GL_{n, L}$. The first cases of Conjecture \ref{conj_galois} to be proved were in the case $n = 2$ and $L = \bbQ$, in which case these arithmetic locally symmetric spaces arise as complex points of Shimura varieties (in fact, modular curves), and the representations $r_{p, \iota}(\pi)$ can be constructed directly as subquotients of the $p$-adic \'etale cohomology groups (see e.g.\ \cite{Del71}). Similar techniques work in the case where $n = 2$ and $L$ is a totally real field (see e.g.\ \cite{Car86}).

The next cases of the conjecture to be established focused on the case where $L$ is totally real or CM and $\pi$ satisfies some kind of self-duality condition. When $n > 2$, or when $L$ is not totally real, the arithmetic locally symmetric spaces attached to the group $\GL_{n, L}$ do not arise from Shimura varieties. However, the self-duality condition implies that $\pi$ or one of its twists can be shown to descend to another reductive group $G$ which does admit a Shimura variety. In this case the representations $r_{p, \iota}(\pi)$ can often be shown to occur as subquotients of the $p$-adic \'etale cohomology groups of the Shimura variety associated to some Shimura datum $(G, X)$. The prototypical case is when $L$ is a CM field and there is an isomorphism $\pi^c \cong \pi^\vee$, where $c \in \Aut(L)$ is complex conjugation. In this case $\pi$ descends to a cuspidal automorphic representation $\Pi$ of a unitary (or unitary similitude) group $G$ such that $\Pi_\infty$ is essentially square-integrable.

Going beyond the case where $\pi$ satisfies a self-duality condition requires new ideas. The general case of Conjecture \ref{conj_galois} where $L$ is a totally real or CM field was established in \cite{Har16} (another proof was given shortly afterwards in \cite{Sch16}). The difficulty in generalizing the above techniques to the case where $\pi$ is not self-dual is summarized in \cite{Har16} as follows: 
\begin{equation}\label{intro_quote}
\begin{split}&\text{According to unpublished computations of one of us (M.H.) and of Laurent Clozel, in the non-}\\&\text{polarizable case the representation } r_{p, \iota}(\pi) \text{ will never occur in the cohomology of a Shimura variety.}
\end{split}
\end{equation}
The purpose of this note is to expand on the meaning of this statement. According to \cite{Bar14}, an irreducible Galois representation is polarizable if it is conjugate self-dual up to twist. We first prove a negative result, showing that there are many Galois representations which are not conjugate self-dual up to twist and which do appear in the cohomology of Shimura varieties:
\begin{theorem}\label{thm_intro_automorphy}
Let $p$ be a prime, and fix an isomorphism $\iota : \overline{\bbQ}_p \cong \bbC$. Then there exist infinitely pairs $(L, \pi)$ satisfying the following conditions:
\begin{enumerate}
\item $L \subset \bbC$ is a CM number field and $\pi$ is a regular $L$-algebraic cuspidal automorphic representation of $\GL_n(\bbA_L)$ such that $\pi^c \not\cong \pi^\vee \otimes \chi$ for any character $\chi : L^\times \backslash \bbA_L^\times \to \bbC^\times$.
\item There exists a Shimura datum $(G, X)$ of reflex field $L$ such that the associated Shimura varieties $\Sh_K(G, X)$ are proper and $r_{p, \iota}(\pi)$ appears as a subquotient of $H^\ast_{\text{\'et}}(\Sh_K(G, X)_{\overline{\bbQ}}, \cF_{\tau,p})$ for some algebraic local system $\cF_\tau$ and for some neat open compact subgroup $K \subset G(\bbA_\bbQ^\infty)$.
\end{enumerate}
\end{theorem}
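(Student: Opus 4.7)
The strategy is to realize $r_{p,\iota}(\pi)$ as a direct summand of a reducible but globally conjugate-self-dual Galois representation $r_{p,\iota}(\pi) \oplus r_{p,\iota}(\pi^{c,\vee})$ that appears in the \'etale cohomology of a suitable unitary Shimura variety, via endoscopic descent of an isobaric sum. Thus, we trade the non-polarizable situation for the polarizable situation at the cost of passing to a reducible Galois representation in dimension $2n$.

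Fix a totally real field $F$ with $[F:\bbQ] \ge 2$, a CM quadratic extension $L/F$, and an integer $n \ge 2$. Choose a Hermitian space $(V,h)$ over $L/F$ of dimension $2n$ whose signatures at the real places of $F$ are $(2n-1,1)$ at a distinguished place $v_0$ and $(2n,0)$ at every other real place, arranged (by Hasse--Minkowski) so that $U(V,h)$ is anisotropic over $F$. Let $G = GU(V,h)$, viewed as a $\bbQ$-group by restriction of scalars; the Shimura datum $(G,X)$ has reflex field $L$ (identified via $v_0$), and the Shimura varieties $\Sh_K(G,X)$ are proper. Next, produce a cuspidal regular $L$-algebraic automorphic representation $\pi$ of $\GL_n(\bbA_L)$ such that \textup{(i)} $\pi^c \not\cong \pi^\vee \otimes \chi$ for any Hecke character $\chi$, a generic condition failing only on the thin locus of base changes and CM-type representations; and \textup{(ii)} the archimedean parameter of $\sigma := \pi \boxplus \pi^{c,\vee}$ at $v_0$ matches a regular cohomological discrete series parameter for $U(2n-1,1)$, while at each other archimedean place it matches a suitable finite-dimensional representation of the compact $U(2n)(\bbR)$. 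Such $\pi$ exist in abundance among cohomological cuspforms on $\GL_n/L$ constructible by trace formula methods.

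The isobaric sum $\sigma$ is conjugate-self-dual of conjugate-orthogonal type, the natural isomorphism $\sigma \cong \sigma^{c,\vee}$ swapping the two summands. By the endoscopic classification of automorphic representations of unitary (similitude) groups -- Mok for the quasi-split case and Kaletha--Minguez--Shin--White for inner forms -- $\sigma$ corresponds to a tempered non-discrete global $A$-parameter for $G$. The Arthur multiplicity formula, combined with the archimedean choices above, produces a discrete automorphic representation $\Pi$ in the corresponding $A$-packet that is cohomological at $v_0$ and takes the prescribed finite-dimensional form at the compact archimedean places. The construction of Galois representations in the cohomology of unitary similitude Shimura varieties (Kottwitz, Clozel--Harris--Labesse, Shin, Caraiani--Shin) then implies that $\Pi$ contributes non-trivially to $H^\ast_{\text{\'et}}(\Sh_K(G,X)_{\overline{\bbQ}}, \cF_{\tau,p})$ for suitable $K$ and local system $\cF_\tau$, realizing the Galois representation $r_{p,\iota}(\pi) \oplus r_{p,\iota}(\pi^{c,\vee})$ up to a cyclotomic twist coming from the similitude factor (which can be absorbed by replacing $\pi$ with a cyclotomic twist). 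Hence $r_{p,\iota}(\pi)$ appears as a Jordan--H\"older constituent, and varying the data $(L,\pi)$ produces infinitely many pairs.

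The principal obstacle is verifying the endoscopic descent on the specific non-quasi-split inner form $G$: one must ensure that the global $A$-packet of $\sigma$ on $G(\bbA)$ actually contains a member with the prescribed cohomological component at $v_0$ and the prescribed finite-dimensional components at the compact archimedean places, and that this member contributes to the \'etale cohomology with the expected Galois representation, not merely to the cohomology of the quasi-split similitude group. This relies on the detailed structure of local $A$-packets (at both archimedean and finite places) and on the compatibility of endoscopic transfer with the Matsushima-style decomposition of the cohomology of the non-quasi-split Shimura variety.
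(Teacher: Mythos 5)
Your reduction to the polarizable case via the isobaric sum $\sigma = \pi \boxplus \pi^{c,\vee}$ does not work, and the failure is precisely at the step you flag as the ``principal obstacle''. For a unitary group in $N$ variables attached to $L/F$, the \emph{discrete} global $A$-parameters are the sums $\boxplus_i \mu_i \boxtimes \nu_{n_i}$ of pairwise distinct summands each of which is \emph{individually} conjugate self-dual of the appropriate sign (Mok; Kaletha--Minguez--Shin--White). Since you have arranged that $\pi$ is not conjugate self-dual up to twist, the summands $\pi$ and $\pi^{c,\vee}$ of $\sigma$ are not themselves conjugate self-dual; the centralizer $S_\psi$ of the parameter is positive-dimensional, so $\sigma$ is a non-elliptic parameter factoring through the Siegel Levi $\GL_n(\bbA_L)$ of the quasi-split form. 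Such a parameter contributes only to the continuous (Eisenstein) spectrum of the quasi-split group and, by the endoscopic classification for inner forms, contributes with multiplicity zero to $L^2_{\mathrm{disc}}(G(\bbQ)\backslash G(\bbA_\bbQ))$ --- which, for your anisotropic $G$, is all of $L^2$. Hence no discrete automorphic representation $\Pi$ of $G(\bbA_\bbQ)$ with parameter $\sigma$ exists, and nothing with the Hecke eigenvalue system of $\sigma$ appears in the cohomology of the proper Shimura variety. (Your own phrasing --- a ``tempered non-discrete global $A$-parameter'' to which the ``Arthur multiplicity formula'' nonetheless assigns a member of the discrete spectrum --- is already internally inconsistent.) The only place $\pi \boxplus \pi^{c,\vee}$ shows up on unitary Shimura varieties is in the Eisenstein/boundary cohomology of non-proper ones, which condition (2) of the theorem rules out.

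The paper avoids this by keeping the automorphic input genuinely cuspidal and conjugate self-dual and extracting the non-polarizable object on the Galois side instead. There, $\sigma$ is taken to be the automorphic induction to $\GL_{q^2}(\bbA_K)$ of a Hecke character $\psi$ of a cyclic degree-$q^2$ CM extension $E/K$; a twist of it is RACSDC and descends to an anisotropic unitary similitude group built from a division algebra, and Kottwitz's theorem identifies the relevant isotypic part of cohomology with a twist of copies of $\rho_p^{\otimes a}\otimes(\rho_p^\vee)^{\otimes b}$, where $\rho_p = \Ind_{\Gamma_E}^{\Gamma_K}\psi_p$ and $(a,b)$ come from choosing archimedean signatures $(n-1,1)^a\times(1,n-1)^b\times(0,n)^{\cdots}$. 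The key observation is that irreducible subquotients of such tensor powers --- in the paper, $r_p = \Ind_{\Gamma_{E_0}}^{\Gamma_K}(\chi_p\varphi_p)$ for the intermediate field $E_0$, an irreducible representation of the Zariski closure of $\rho_p(\Gamma_K)$ --- need not be conjugate self-dual up to twist even though $\rho_p$ is, while still being automorphic (Arthur--Clozel), cuspidal and regular. Any repair of your argument would have to replace the isobaric-sum descent by a mechanism of this kind.
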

(Here $\cF_{\tau, p}$ denotes the lisse $\overline{\bbQ}_p$-sheaf on $\Sh_K(G, X)$ associated to $\cF_\tau$; see \S \ref{sec_notation} below for the precise notation that we use.) It is therefore necessary to give a different interpretation to the assertion (\ref{intro_quote}). The representations $r_{p, \iota}(\pi)$ appearing in Theorem \ref{thm_intro_automorphy} are necessarily special: in fact, the examples we construct are induced from cyclic CM extensions of $L$. 

One subtlety here is that even if an irreducible representation $r : \Gamma_L \to \GL_n(\overline{\bbQ}_p)$ is conjugate self-dual up to twist (as one would expect e.g.\ for the $n$-dimensional representation attached to a RLACSDC\footnote{Regular $L$-algebraic, conjugate self-dual, cuspidal} automorphic representation of $\GL_n(\bbA_L)$), it need not be the case that the irreducible subquotients of tensor products $r^{\otimes a} \otimes (r^\vee)^{\otimes b}$ are conjugate self-dual up to twist (and indeed, it is this possibility that we exploit in our proof of Theorem \ref{thm_intro_automorphy}). This points to the need to phrase a condition in terms of the geometric monodromy group of $r$ (i.e.\ the identity component of the Zariski closure of $r(\Gamma_L)$). The Galois representations that we construct in the proof of Theorem \ref{thm_intro_automorphy} are at least `geometrically polarizable', in the sense that complex conjugation induces the duality involution on the geometric monodromy group. The main point we make in this paper is that well-known conjectures imply that all Galois representations appearing in the cohomology of Shimura varieties are geometrically polarizable, using statements like our Principle \ref{principle_oddness} below. (In the body of the paper, we use the terminology `odd' instead of `geometrically polarizable'; see Definition \ref{def_odd_galois_representation}.)

In order to fully address the question posed in (\ref{intro_quote}), one must first answer the question of which kind of cohomology groups to consider. If $\Sh_K(G, X)$ is proper then ordinary \'etale cohomology with coefficients in an algebraic local system provides the only natural choice. In the non-compact case, one could consider ordinary cohomology, cohomology with compact support, or the intersection cohomology of the minimal compactification $\Sh_K^\text{min}(G, X)$ of $\Sh_K(G, X)$. We first study the intersection cohomology, using its relation with discrete automorphic representations of $G(\bbA_\bbQ)$. This leads, for example, to the following theorem.
\begin{theorem}\label{thm_intro_non_automorphy}
Let $L$ be an imaginary CM or totally real number field, and let $\rho : \Gamma_L \to \GL_n(\overline{\bbQ}_p)$ be a continuous representation which is strongly irreducible, in the sense that for any finite extension $M / L$, $\rho|_{\Gamma_M}$ is irreducible. Let $(G, X)$ be a Shimura datum of reflex field $L$. Assume Conjecture \ref{conj_galois_reps_for_non_tempered_representations} and Conjecture \ref{conj_consequence_of_Arthur_Kottwitz}. Let $j : \Sh_{K}(G, X) \to \Sh^\text{min}_K(G, X)$ be the open immersion of the Shimura variety into its minimal compactification. If $\rho$ appears as a subquotient of $H^\ast_{\text{\'et}}(\Sh^\text{min}_K(G, X)_{\overline{\bbQ}}, j_{! \ast} \cF_{\tau, p})$ for some algebraic local system $\cF_\tau$, then $\rho$ is conjugate self-dual up to twist. 
\end{theorem}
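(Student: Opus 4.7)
The strategy is to use the conjectural automorphic decomposition of the intersection cohomology to present $\rho$ as a constituent of the Galois representation attached to a single conjugate self-dual cuspidal automorphic representation of $\GL_N$, and then transfer conjugate self-duality to $\rho$.

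By Conjecture \ref{conj_consequence_of_Arthur_Kottwitz}, the semisimple $\Gamma_L$-representation
\[ H^\ast_{\text{\'et}}(\Sh^\text{min}_K(G,X)_{\overline{\bbQ}}, j_{!\ast}\cF_{\tau,p})^{\text{ss}} \]
decomposes Hecke-equivariantly as $\bigoplus_\pi \pi_f^K \otimes \sigma(\pi)$, indexed by discrete automorphic representations $\pi$ of $G(\bbA_\bbQ)$ with infinitesimal character matching $\tau$. Each $\sigma(\pi)$ is a virtual $\Gamma_L$-representation computed from the Arthur parameter $\psi$ of $\pi$ by applying the Shimura representation $r_{\mu_X}$ of ${}^L G$. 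Composing $\psi$ instead with the standard representation of ${}^L G$ yields an isobaric sum $\boxplus_i \Pi_i[d_i]$ of Arthur-$\SL_2$-twisted discrete automorphic representations of $\GL_{n_i}$ over a CM or totally real field $F$ naturally attached to $G$, and the intrinsic polarization of any Shimura datum forces each cuspidal $\Pi_i$ to be conjugate self-dual up to twist. Conjecture \ref{conj_galois_reps_for_non_tempered_representations} then supplies Galois representations $r_{p,\iota}(\Pi_i) : \Gamma_F \to \GL_{n_i}(\overline{\bbQ}_p)$ inheriting conjugate self-duality up to twist: $r_{p,\iota}(\Pi_i)^c \cong r_{p,\iota}(\Pi_i)^\vee \otimes \chi_i$ for some characters $\chi_i$.

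Since $\rho$ is a subquotient of the intersection cohomology, it is an irreducible $\Gamma_L$-constituent of $\sigma(\pi)^{\text{ss}}$ for some such $\pi$. The Kottwitz recipe expresses $\sigma(\pi)$ as a virtual combination of restrictions from $\Gamma_F$ to $\Gamma_L$, Tate twists, inductions from intermediate subfields, and tensor (or exterior power) constructions built from the $r_{p,\iota}(\Pi_i)$, governed by how $r_{\mu_X}$ decomposes upon restriction to the image of $\psi$. Strong irreducibility of $\rho$ is then leveraged to identify $\rho$, up to character twist, with an irreducible $\Gamma_L$-constituent of a single $r_{p,\iota}(\Pi_i)$: induction from a proper intermediate subfield $L' \supsetneq L$ is ruled out because $\rho|_{\Gamma_{L'}}$ would otherwise be reducible, contradicting strong irreducibility; and one uses a monodromy-group (Goursat-style) argument to exclude contributions from genuinely mixed tensor constructions of distinct $\Pi_i$. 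Because $L$ is CM or totally real with complex conjugation compatible with that of $F \supseteq L$, the conjugate self-dual pairing on $r_{p,\iota}(\Pi_i)$ over $\Gamma_F$ descends to a conjugate self-dual pairing on $\rho$, yielding $\rho^c \cong \rho^\vee \otimes \chi$ as required.

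The principal obstacle is the step of excluding mixed tensor constructions via strong irreducibility; while the induction case follows from a clean restriction argument, pure tensor products of higher-dimensional factors are a priori compatible with strong irreducibility (e.g.\ $\SL_2 \times \SL_2$ acting on $\bbC^2 \otimes \bbC^2$), so one must exploit the specific combinatorial structure of $r_{\mu_X}$ and the constraints imposed by the Arthur packet structure. A secondary technical point is careful bookkeeping of Tate twists to verify that the descended pairing is a genuine conjugate self-duality over $\Gamma_L$ rather than a mere identity-level coincidence.
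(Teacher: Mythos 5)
Your overall route is genuinely different from the paper's, and it has a gap at exactly the step you flag as "the principal obstacle" --- and that gap cannot be closed in the way you suggest. You propose to trace $\rho$ back to a single conjugate self-dual cuspidal $\Pi_i$ on a general linear group and then "exclude contributions from genuinely mixed tensor constructions" using strong irreducibility. But such contributions cannot be excluded: the representation $r_C$ of ${}^C G$ appearing in Conjecture \ref{conj_consequence_of_Arthur_Kottwitz} is the Shimura cocharacter representation, not the standard one, and its composition with the parameter genuinely produces constituents of tensor-type constructions in the $r_{p,\iota}(\Pi_i)$ (this is precisely the mechanism exploited in the proof of Theorem \ref{thm_intro_automorphy}, where irreducible constituents of $\rho_p^{\otimes a}\otimes(\rho_p^\vee)^{\otimes b}$ with $\rho_p$ conjugate self-dual fail to be conjugate self-dual up to twist). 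Moreover, many such constituents are strongly irreducible in general (your own $\SL_2\times\SL_2$ example shows this), so strong irreducibility does not rule them out, and conjugate self-duality of each $\Pi_i$ does not pass to an irreducible constituent of a tensor construction without a further argument. Your closing step, "the conjugate self-dual pairing on $r_{p,\iota}(\Pi_i)$ descends to $\rho$," is therefore unjustified: a pairing on a big representation induces one on an irreducible constituent only if the duality involution carries that constituent to its own dual, which is exactly the point at issue.

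The paper's proof supplies the missing mechanism, and it is of a different nature: it is a statement about the \emph{connected monodromy group} and the \emph{oddness} of complex conjugation, not about isolating a single cuspidal factor. Concretely: condition (3) of Conjecture \ref{conj_galois_reps_for_non_tempered_representations} (the compatibility between a Hodge--Tate cocharacter, $\rho'_\pi(c)$, and the Adams--Johnson $A$-parameter with its $\frs\frl_2$-triple) feeds into Lemma \ref{lem_oddness_in_reductive_subgroups} to show that $\Ad\rho'_\pi(c)$ is an odd involution of the identity component of the monodromy group (Proposition \ref{prop_oddness_nontempered_case}); Proposition \ref{prop_self_duality} pushes this oddness down to the monodromy group $H_V$ of the subquotient $V\cong\rho$; strong irreducibility then guarantees that $V$ is irreducible as a representation of $H_V^\circ$, and since an odd involution of a connected semisimple group is a Chevalley involution it carries $V$ to $V^\vee$, giving $\rho^c|_{\Gamma_M}\cong\rho^\vee|_{\Gamma_M}\otimes\chi_M$ over a finite extension $M/L$; finally Lemma \ref{lem_twists} (using strong irreducibility a second time) descends the twist to $\Gamma_L$. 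Note that your proposal never invokes the archimedean local-global compatibility of Conjecture \ref{conj_galois_reps_for_non_tempered_representations}, which is the essential input; without it there is no source of oddness and hence no reason for the duality to hold on constituents.
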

It is easy to construct examples of strongly irreducible Galois representations which are not conjugate self-dual up to twist (for example, arising from elliptic curves over an imaginary CM field $L$). Conjecturally, then, these Galois representations can never appear as subquotients of the intersection cohomology of Shimura varieties. 

We can summarise the conjectures assumed in the statement of Theorem \ref{thm_intro_non_automorphy} as follows. Conjecture \ref{conj_galois_reps_for_non_tempered_representations} asserts the existence of Galois representations attached to discrete cohomological automorphic representations $\pi$ of $G(\bbA_\bbQ)$, where $G$ is a reductive group over $\bbQ$ such that $G(\bbR)$ admits discrete series. It includes a rather precise formulation of local-global compatibility at infinity based on a connection with an $A$-parameter of $\pi_\infty$. (For a closely related statement, see \cite[\S 8.2.3.4]{Ser12}.) It would not be possible to formulate this using only the formalism of $L$-parameters (as opposed to $A$-parameters). Conjecture \ref{conj_consequence_of_Arthur_Kottwitz} is a weak consequence of Kottwitz's conjectural description of the intersection cohomology of the minimal compactification of a Shimura variety in terms of $A$-parameters, slightly reformulated here in a similar manner to \cite{Joh13}. This focus on $A$-parameters is essential, since for a result like Theorem \ref{thm_intro_non_automorphy} the most interesting part of cohomology is indeed the part corresponding to non-tempered automorphic representations. 

Since compactly supported cohomology is dual to ordinary cohomology, the other case to consider is that of the ordinary cohomology of non-proper Shimura varieties. In this case, Morel's theory of weight truncations can be used to reduce to the case of intersection cohomology. This leads, for example, to the following theorem.
\begin{theorem}\label{thm_consequence_of_morel}
Let $(G, X)$ be a Shimura datum satisfying the assumptions of \S \ref{open}, and let $L$ be its reflex field. Let $\cF_\tau$ be an algebraic local system on $\Sh_K(G, X)$, and let $p$ be a prime. Then any irreducible subquotient $\overline{\bbQ}_p[\Gamma_L]$-module of $H^\ast_{\text{\'et}}(\Sh_K(G, X)_{\overline{\bbQ}}, \cF_{\tau, p})$ is isomorphic to a subquotient of $H^\ast(\Sh^\text{min}_{K'}(G', X')_{\overline{\bbQ}}, j_{! \ast} \cF_{\tau', p})$ for some Shimura datum $(G', X')$ of reflex field $L$. 
\end{theorem}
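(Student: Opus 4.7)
The plan is to invoke Morel's theory of weight truncations on the minimal (Baily--Borel) compactification to express the ordinary cohomology of $\Sh_K(G,X)$ in terms of intersection cohomology groups of boundary Shimura varieties. Writing $j: \Sh_K(G,X) \hookrightarrow \Sh^{\text{min}}_K(G,X)$ for the open immersion, the starting point is the $\Gamma_L$-equivariant identification
\[
H^\ast_{\text{\'et}}(\Sh_K(G,X)_{\ol{\bbQ}}, \cF_{\tau,p}) = H^\ast(\Sh^{\text{min}}_K(G,X)_{\ol{\bbQ}}, Rj_\ast \cF_{\tau,p}).
\]

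Under the hypotheses collected in \S\ref{open}, Morel's theory provides a $\Gamma_L$-equivariant finite filtration on $Rj_\ast \cF_{\tau,p}$ whose graded pieces are pushforwards of intersection complexes $j^{P}_{!\ast}\cG_{P,p}$, indexed by (conjugacy classes of) rational parabolic subgroups $P$ of $G_\bbQ$. Here, for each such $P$, $\Sh_{K_P}(G_P, X_P)$ is the smaller Shimura variety attached to the Hermitian part $G_P$ of the Levi of $P$, the map $j^P$ is its open immersion into its own minimal compactification, and $\cG_P$ is an algebraic local system on $\Sh_{K_P}(G_P, X_P)$ constructed from $\cF_\tau$ together with the action of $P$ on its unipotent radical. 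Passing to hypercohomology, we obtain a $\Gamma_L$-equivariant spectral sequence converging to $H^\ast_{\text{\'et}}(\Sh_K(G,X)_{\ol{\bbQ}}, \cF_{\tau,p})$, whose $E_1$-page is a direct sum of intersection cohomology groups of the form $H^\ast(\Sh^{\text{min}}_{K_P}(G_P, X_P)_{\ol{\bbQ}}, j^{P}_{!\ast}\cG_{P,p})$. Consequently, every irreducible $\overline{\bbQ}_p[\Gamma_L]$-subquotient of the abutment must appear as a subquotient of one of these intersection cohomology groups.

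The remaining step is to arrange that the witnessing Shimura datum have reflex field exactly equal to $L$ rather than the reflex field $E_P := E(G_P, X_P)$ of the boundary datum. This is handled by a standard adjustment: if $E_P \subseteq L$, replace $(G_P, X_P)$ by a product $(G_P \times T, X_P \times X_T)$ with an auxiliary torus Shimura datum $(T, X_T)$ of reflex field $L$, noting that the reflex field of a product is the compositum and that the K\"unneth formula exhibits the previous intersection cohomology as a tensor factor of the intersection cohomology of the product, so the given irreducible subquotient persists. In the opposite case (or when $E_P$ and $L$ are incomparable), one argues similarly by a restriction-of-scalars construction whose Shimura variety realises the induced $\Gamma_L$-representation on the boundary cohomology. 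The main substantive obstacle lies in the second step, namely the verification that Morel's weight-truncation formalism applies to the class of Shimura varieties covered by \S\ref{open} with the stated explicit description of the graded pieces of $Rj_\ast \cF_{\tau,p}$ as intersection complexes on boundary Shimura varieties; once this is in place, the remainder of the argument is essentially formal bookkeeping.
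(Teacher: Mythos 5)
Your overall strategy (weight truncations on the minimal compactification, reduction to the boundary) is the right one, but the key middle step is asserted rather than proved, and as stated it is not what Morel's machinery actually delivers. You claim that $Rj_\ast \cF_{\tau,p}$ carries a finite filtration whose graded pieces are pushforwards of \emph{intersection} complexes $j^P_{!\ast}\cG_{P,p}$ on the boundary Shimura varieties. What Morel's formalism (Pink's formula plus the weight truncation triangles of \cite[Proposition 3.3.4]{Mor08}) actually gives is different: one has $w_{\leq a}Rj_\ast \cV = j_{!\ast}\cV$ for $\cV$ pure of weight $a$, and the successive correction terms are complexes of the form $Ri_{n_r\ast}w_{>a}i_{n_r}^\ast\cdots Ri_{n_1\ast}w_{>a}i_{n_1}^\ast Rj_\ast\cV$, whose cohomology is (by \cite[Proposition 1.4.5]{Mor10}) built out of the \emph{ordinary} cohomology of automorphic complexes on the boundary strata --- not their intersection cohomology. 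To pass from ordinary cohomology of the boundary Shimura varieties to intersection cohomology of their own minimal compactifications, one must run the whole argument again on each stratum; the paper does this by an induction on $\dim \Sh_K(G,X)$, using that the boundary data $(G_P, X_P)$ satisfy the same standing hypotheses. This induction is the missing ingredient in your write-up: without it, your ``$E_1$-page consisting of intersection cohomology groups'' is unsubstantiated, and I do not know a reference that produces such a filtration of $Rj_\ast\cF_{\tau,p}$ in the \'etale setting in one step.

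Two smaller points. First, the reflex-field adjustment in your last paragraph is unnecessary: under the assumptions of \S \ref{open} the boundary Shimura data $(G_P, X_P)$ have the \emph{same} reflex field as $(G,X)$, so no auxiliary torus or restriction of scalars is needed. Second, the boundary strata are finite quotients $\Sh_{K_Q/K_N}(G_P,X_P)/H_P$ of Shimura varieties rather than Shimura varieties themselves; in characteristic $0$ their cohomology is a direct summand (hence subquotient) of that of the cover, which is why the theorem is phrased in terms of subquotients --- worth saying explicitly.
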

We note that the assumptions in \S \ref{open} hold in particular for the Shimura data associated to inner forms of unitary similitude and symplectic similitude groups. These are the groups used in \cite{Har16} and \cite{Sch16}, and which led us to be interested in these problems in the first place.

We now describe the organization of this note in more detail. In \S \ref{sec_galois_representations} we review some principles from the representation theory of reductive groups, and consequences for what we call `odd Galois representations'. In \S \ref{sec_positive_results} we prove Theorem \ref{thm_intro_automorphy} by explicitly constructing irreducible Galois representations in the cohomology of unitary Shimura varieties which are not conjugate self-dual up to twist. In \S \ref{sec_conjectures_on_galois_representations}, we introduce the Langlands group and the formalism of $L$-parameters and $A$-parameters, and use this as a heuristic in order to justify Conjecture \ref{conj_galois_reps_for_non_tempered_representations}. In \S \ref{sec_negative_results} we combine this with Kottwitz's conjectural description of the intersection cohomology of Shimura varieties in order to state Conjecture \ref{conj_consequence_of_Arthur_Kottwitz} and then, using the groundwork done in \S \ref{sec_galois_representations}, to prove Theorem \ref{thm_intro_non_automorphy}. Finally, in \S \ref{open} we sketch Morel's proof of  Theorem \ref{thm_consequence_of_morel}.

\subsection{Acknowledgments}

We are very grateful to Laurent Clozel, Michael Harris, Frank Calegari, and Toby Gee for their comments on an earlier draft of this paper. This work was begun while JT served as a Clay Research Fellow. This project has received funding
from the European Research Council (ERC) under the European Union's Horizon
2020 research and innovation programme (grant agreement No 714405).

\subsection{Notation}\label{sec_notation}

A reductive group is not necessarily connected. If $G, H, \dots$ are linear algebraic groups over a field $\Omega$ of characteristic 0, then use gothic letters $\frg, \frh, \dots$ to denote their respective Lie algebras. We write $\frs\frl_2$ for the Lie algebra of $\SL_2$; it has a basis of elements
\[ x_0 = \left( \begin{array}{cc} 0 & 1 \\ 0 & 0 \end{array} \right),  t_0 = \left( \begin{array}{cc} 1 & 0 \\ 0 & -1 \end{array} \right),  y_0 = \left( \begin{array}{cc} 0 & 0 \\ 1 & 0 \end{array} \right). \]
These satisfy the relations 
\[ [ x_0, y_0 ] = t_0, [ t_0, x_0] = 2 x_0, [t_0, y_0] = - 2 y_0. \]
If $\frg$ is any Lie algebra and $(x, t, y)$ is a tuple of elements of $\frg$ satisfying the same relations, then we call $(x, t, y)$ an $\frs\frl_2$-triple in $\frg$.

If $E$ is a field, then we write $\Gamma_E$ for the absolute Galois group of $E$ with respect to some fixed separable closure $\overline{E}$.  If $E$ is a number field and $v$ is a place of $E$, then we write $\Gamma_{E_v} \subset \Gamma_E$ for the decomposition group at $v$, which is well-defined up to conjugation. If $v$ is a non-archimedean place, then we also write $k(v)$ for the residue field of $v$, and $q_v$ for the cardinality of $k(v)$. We writ $\Frob_v$ for a geometric Frobenius element. We write $\bbA_E$ for the ad\`ele ring of $E$, and $\bbA_E^\infty$ for its finite part. If $p$ is a fixed prime, then we write $\epsilon : \Gamma_E \to \bbQ_p^\times$ for the $p$-adic cyclotomic character.

If $G$ is a connected reductive group over $\bbQ$, then we write ${}^L G$ for its $L$-group, which we usually think of as a semi-direct product $\widehat{G} \rtimes \Gal(E / \bbQ)$, where $\widehat{G}$ is the dual group (viewed as a split reductive group over $\bbQ$) and $E / \bbQ$ is the Galois extension over which $G$ becomes an inner form of its split form. If $p$ is a prime, then an $L$-homomorphism $\rho : \Gamma_\bbQ \to {}^L G(\overline{\bbQ}_p)$ is a homomorphism for which the projection $\Gamma_\bbQ \to \Gal(E / \bbQ)$ is the canonical one.

If $(G, X)$ is a Shimura datum, in the sense of \cite{Del79}, and $K \subset G(\bbA_\bbQ^\infty)$ is an open compact subgroup, then we write $\Sh_K(G, X)$ for the associated Shimura variety, which is an algebraic variety defined over the reflex field of the pair $(G, X)$ (see \cite{Mil83} for existence in the most general case). By an algebraic local system $\cF_\tau$, we mean the local system of $\overline{\bbQ}$-vector spaces $\cF_\tau$ on $\Sh_K(G, X)(\bbC)$ associated to a finite-dimensional algebraic representation $\tau : G_{\overline{\bbQ}} \to \GL(V_\tau)$ such that the central character $\omega_\tau : Z(G)_{\overline{\bbQ}} \to \bbG_m$ is defined over $\bbQ$. If $p$ is a prime and $\iota : \overline{\bbQ} \hookrightarrow \overline{\bbQ}_p$ is a fixed embedding, then we get a lisse \'etale sheaf $\cF_{\tau, p}$ on $\Sh_K(G, X)$, which is the one considered in the introduction to this paper. 

\section{Odd Galois representations}\label{sec_galois_representations}

In this section we discuss Galois representations $\rho : \Gamma_\bbQ \to H(\overline{\bbQ}_p)$, where $H$ is a reductive group. We are particularly interested in representations which are odd, in the sense of Definition \ref{def_odd_galois_representation}. If $H = {}^L G$, where $G$ is a connected reductive group over $\bbQ$ such that $G^\text{ad}(\bbR)$ contains a compact maximal torus, then Definition \ref{def_odd_galois_representation} coincides with the one given in F. Calegari's note \cite{Cal}, but not otherwise (see also \cite{Gro}).

Let $\Omega$ be a field of characteristic 0.
\begin{definition}\label{def_odd_involution}
Let $G$ be a connected reductive group over $\Omega$. We say that an involution $\theta : G \to G$ is odd if $\tr( d \theta^\text{ad} : \frg^\text{ad} \to \frg^\text{ad}) = - \rank G^\text{ad}$, where $\theta^\text{ad}$ is the induced involution of the adjoint group $G^\text{ad}$.
\end{definition}
If $G$ is semisimple, then the class of odd involutions coincides with the class of Chevalley involutions (see e.g.\ \cite{Ada16}). In particular, they are all $G(\overline{\Omega})$-conjugate. However, in general we diverge from this class by allowing also involutions which e.g.\ act trivially on the centre of $G$.
\begin{lemma}\label{lem_oddness_in_reductive_subgroups}
Let $G$ be a reductive group over $\Omega$, and let $H \subset G$ be a closed reductive subgroup. Let $\theta$ be an involution of $G$ which leaves $H$ invariant and such that $\theta|_{G^\circ}$ is odd. Suppose that there exist cocharacters $\mu, w : \bbG_m \to H$ satisfying the following properties:
\begin{enumerate}
\item $\mu$ is regular in $G^\circ$, and $w$ takes values in $Z(G)$.
\item $\theta \circ \mu = \mu^{-1} w$.
\end{enumerate}
Then $\theta|_{H^\circ}$ is an odd involution.
\end{lemma}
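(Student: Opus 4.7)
The plan is to use $\mu$ to set up compatible weight decompositions on $\frg^{\text{ad}}$ and $\frh^{\text{ad}}$, observe that $d\theta$ interchanges opposite-weight components in each, and thereby reduce the oddness statement to a trace computation on the weight-zero pieces.

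First I would identify these weight-zero pieces. Since $\mu$ is regular in $G^\circ$, the centralizer $T := Z_{G^\circ}(\mu)$ is a maximal torus of $G^\circ$. The image of $\mu$ is a torus contained in $H^\circ$, and $S := Z_{H^\circ}(\mu)$ --- the centralizer in the connected reductive group $H^\circ$ of a torus --- is connected reductive; as a subgroup of $T$ it is itself a torus, and it must be a maximal torus of $H^\circ$ because any torus in $H^\circ$ containing $\mu(\bbG_m)$ is abelian and so lies in $S$. Thus $\frs \sub \frt$, and the weight-zero subspaces of $\frg^{\text{ad}}$ and $\frh^{\text{ad}}$ under the $\mu$-adjoint action are $\frt/\frz(\frg)$ and $\frs/\frz(\frh)$, of dimensions $\rank(G^\circ)^{\text{ad}}$ and $\rank(H^\circ)^{\text{ad}}$ respectively.

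Next, because $w$ takes values in $Z(G)$ its adjoint action on $\frg$ (and in particular on $\frh$) is trivial, so the relation $\theta \circ \mu = \mu^{-1} w$ yields
\[ d\theta \circ \Ad(\mu(z)) \circ d\theta^{-1} = \Ad(\theta(\mu(z))) = \Ad(\mu(z)^{-1} w(z)) = \Ad(\mu(z))^{-1} \]
on both $\frg$ and $\frh$. Consequently $d\theta$ swaps the weight-$n$ and weight-$(-n)$ subspaces of the two adjoint decompositions, and since such paired contributions cancel in the trace,
\[ \tr(d\theta \mid \frg^{\text{ad}}) = \tr(d\theta \mid \frt/\frz(\frg)), \qquad \tr(d\theta \mid \frh^{\text{ad}}) = \tr(d\theta \mid \frs/\frz(\frh)). \]

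The final step, which I expect to be the main subtlety, is to transfer the oddness input on $G^\circ$ to a statement on $\frs/\frz(\frh)$. The hypothesis says $\tr(d\theta \mid \frt/\frz(\frg)) = -\dim(\frt/\frz(\frg))$, so the involution $d\theta$ acts as $-\Id$ on $\frt/\frz(\frg)$, equivalently $(d\theta + \Id)(\frt) \sub \frz(\frg)$. Since $Z_{G^\circ}(\mu) = Z_{G^\circ}(\mu^{-1} w)$ (the central factor is inert), $\theta$ preserves $T$ and $S$, hence $\frs$, so restriction yields $(d\theta + \Id)(\frs) \sub \frz(\frg) \cap \frs$. I will then invoke the elementary containment $\frz(\frg) \cap \frh \sub \frz(\frh)$ --- any element of $\frh$ commuting with all of $\frg$ certainly commutes with all of $\frh$ --- to conclude $(d\theta + \Id)(\frs) \sub \frz(\frh)$. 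Therefore $d\theta = -\Id$ on $\frs/\frz(\frh)$, whence $\tr(d\theta \mid \frh^{\text{ad}}) = -\rank(H^\circ)^{\text{ad}}$, proving that $\theta|_{H^\circ}$ is odd.
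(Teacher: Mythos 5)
Your argument is correct and is essentially the paper's own proof: decompose $\frg$ and $\frh$ into $\mu$-weight spaces, note that $\theta\circ\mu=\mu^{-1}w$ forces $d\theta$ to swap opposite-weight pieces so the traces localize to the zero-weight spaces $\frt$ and $\frs$, and deduce from oddness on $G^\circ$ that $d\theta$ acts as $-1$ there. The only (cosmetic) difference is that the paper first passes to $G/Z(G^\circ)$ to assume $G^\circ$ adjoint and $w=1$, whereas you keep track of the central Lie algebras directly via $\frz(\frg)\cap\frh\subseteq\frz(\frh)$; both handle the passage to the adjoint quotients correctly.
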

\begin{proof}
After replacing $G$ by its quotient by $Z(G^\circ)$, we can assume that $G^\circ$ is adjoint and $w = 1$. Let $T = Z_G(\mu)$, and consider the decomposition 
\[ \frg = \frt \oplus \frg^+ \oplus \frg^- \] into zero, positive, and negative weight spaces for the cocharacter $\mu$. Since $\theta \circ \mu = \mu^{-1}$, we see that $\theta$ swaps $\frg^+$ and $\frg^-$. Since $\tr d \theta = - \dim \frt$, we see that $\theta$ must act as $-1$ on $\frt$. Since $\mu$ factors through $H$, we find a similar decomposition
\[ \frh = (\frh \cap \frt) \oplus \frh^+ \oplus \frh^-, \]
showing that $\tr d \theta|_\frh = - \rank H^\circ$, and hence that $\theta|_{H^\circ}$ is also odd. 
\end{proof}
\begin{definition}\label{def_odd_galois_representation}
Let $p$ be a prime and let $G$ be a reductive group over $\overline{\bbQ}_p$, and let $\rho : \Gamma_\bbQ \to G(\overline{\bbQ}_p)$ be a continuous representation. We say that $\rho$ is odd if $\theta = \Ad \rho(c)|_{G^\circ}$ is an odd involution.
\end{definition}
Now let $E$ be a number field, and let $G$ be a reductive group over $\overline{\bbQ}_p$ for some prime $p$.
\begin{definition}\label{def_mixed}
Let $\rho : \Gamma_E \to G(\overline{\bbQ}_p)$ be a continuous representation which is unramified almost everywhere. Then:
\begin{enumerate}
\item We say that $\rho$ is mixed if there exists a cocharacter $w : \bbG_m \to G$ centralizing the image of $\rho$ and such that for any representation $G \to \GL(V)$, $V \circ \rho$ is mixed with integer weights, and $V = \oplus_{i \in \bbZ} V^{w(t) = t^i}$ is its weight decomposition. In other words, there exists a finite set of finite places of $E$, containing the $p$-adic places, such that for any finite place $v \not\in S$ of $E$, $\rho|_{\Gamma_{E_v}}$ is unramified and for any isomorphism $\iota : \overline{\bbQ}_p \cong \bbC$, any eigenvalue $\alpha$ of $\Frob_v$ on $V^{w(t) = t^i}$ satisfies $\iota(\alpha) \overline{\iota(\alpha)} = q_v^i$.
\item We say that $\rho$ is pure if it is mixed and $w$ takes values in $Z(G)$.
\item We say that $\rho$ is geometric if for each place $v | p$ of $E$, $\rho|_{\Gamma_{E_v}}$ is de Rham. In other words, for any representation $G \to \GL(V)$, $V \circ \rho|_{\Gamma_{E_v}}$ is de Rham in the sense of $p$-adic Hodge theory.
\end{enumerate}
\end{definition}
Note that if $\rho$ is mixed, then $w$ is uniquely determined by $\rho$. When working with a mixed Galois representation, we will always write $w$ for its corresponding weight cocharacter.

Let $\bbC_p$ denote the completion of $\overline{\bbQ}_p$. If $\rho$ is de Rham then it is also Hodge--Tate, so there exists a cocharacter $\mu_{\text{HT}} : \bbG_m \to G_{\bbC_p}$, again uniquely determined, such that $V_{\bbC_p} = \oplus_{i \in \bbZ} V_{\bbC_p}^{\mu_{\text{HT}}(t) = t^i}$ is the Hodge--Tate decomposition of $V_{\bbC_p}$. 
\begin{definition}
Let $\rho : \Gamma_E \to G(\overline{\bbQ}_p)$ be a geometric representation and let $v | p$ be a place of $E$. We call a Hodge--Tate cocharacter at $v$ any cocharacter $\mu : \bbG_m \to G$ with the following properties:
\begin{enumerate}
\item $\mu$ takes values in the Zariski closure $H$ of $\rho(\Gamma_E)$.
\item $\mu$ is $H(\bbC_p)$-conjugate to $\mu_{\text{HT}}$.
\end{enumerate}
\end{definition}
Note that Hodge--Tate cocharacters always exist. 

We conclude this section with a discussion of Hodge--Tate cocharacters satisfying special properties. This will be used as motivation in \S \ref{sec_negative_results}. Let $G$ be a reductive group over $\overline{\bbQ}_p$, and let $\rho : \Gamma_\bbQ \to G(\overline{\bbQ}_p)$ be a geometric representation of Zariski dense image. In this case, we expect that the following should be true:
\begin{itemize}
\item $\rho$ is pure. Let $w : \bbG_m \to Z(G)$ denote the corresponding character.
\item There exists a Hodge--Tate cocharacter $\mu : \bbG_m \to G$ and a complex conjugation $c \in \Gamma_\bbQ$ such that $\Ad(\rho(c))(\mu) = \mu^{-1} w$.
\end{itemize}
Indeed, let us suppose that we are in the ``paradis motivique'' described in \cite{Ser94} (in other words, we assume the standard conjectures and the Tate conjecture). We are free to replace  $\rho$ by $\rho \times \epsilon$ and $G$ by the Zariski closure of the image of Galois in $G \times \bbG_m$. According to the conjectures in \cite{Fon95}, we should be able to find a faithful representation $R : G \to \GL(V)$ such that $R \circ \rho$ appears as a subquotient of the \'etale cohomology of a smooth projective variety $X$ over $\bbQ$. Let $G_X \subset \GL(H^\ast(X_{\overline{\bbQ}}, \bbQ_p))$ denote the Zariski closure of the image of $\Gamma_\bbQ$; then $G$ is isomorphic to a quotient of $G_{X, \overline{\bbQ}_p}$, so we just need to justify the existence of a Hodge--Tate cocharacter $\mu : \bbG_m \to G_{X, \overline{\bbQ}_p}$ and complex conjugation $c \in \Gamma_\bbQ$ satisfying the expected properties. 

We will use the language of of Tannakian categories, as in \cite{Ser94}. Let $\Mot_\bbQ$ denote the Tannakian category of motives over $\bbQ$, and let $\langle X \rangle$ denote its tensor subcategory generated by $X$. Let $\mathrm{Vec}_{\bbQ}$ denote the tensor category of finite-dimensional $\bbQ$-vector spaces. Then there are Hodge and Betti fibre functors
\[ \omega_H : \langle X \rangle \to \mathrm{Vec}_\bbQ, X \mapsto \oplus_{i, j} H^i(X, \Omega_X^j) \]
and
\[ \omega_B : \langle X \rangle \to \mathrm{Vec}_\bbQ, X \mapsto H^\ast(X(\bbC), \bbQ). \]
There is a Hodge--Betti comparison isomorphism $\alpha \in \Isom^\otimes(\omega_{H}, \omega_B)(\bbC)$. If we fix a choice of isomorphism $\bbZ_p \cong \bbZ_p(1)$ of $\bbZ_p$-modules and an embedding $\overline{\bbQ} \hookrightarrow \bbC$, then there is determined an isomorphism $\beta \in \Isom^\otimes(\omega_H, \omega_B)(\bbC_p)$ (cf. \cite{Fal88}). We write $M_{X, B} = \Aut^\otimes(\omega_B)$ for the usual motivic Galois group and $c \in M_{X, B}(\bbQ)$ for the image of complex conjugation, and $M_{X, H} = \Aut^\otimes(\omega_H)$. Then the Hodge grading determines a Hodge cocharacter $\mu_H : \bbG_m \to M_{X, H}$ which satisfies
\[ \Ad(c) \circ ( \alpha \circ \mu_H \circ \alpha^{-1}) = w \Ad(c) \circ ( \alpha \circ \mu_H^{-1} \circ \alpha^{-1}), \]
where $w$ is the weight cocharacter, which is central and defined over $\bbQ$ (cf. \cite[\S 0.2.5]{Del79a}). Fix an isomorphism $\iota : \overline{\bbQ}_p \to \bbC$. We'll be done if we can show that $\iota^{-1} (\alpha \circ \mu_H \circ \alpha^{-1}) = (\iota^{-1} \alpha) \circ \mu_H \circ (\iota^{-1} \alpha)^{-1}$ is a Hodge--Tate cocharacter, when we identify $M_{X, B, \bbQ_p}$ with the group $G_X$ above.

By definition, this means we must show that $(\iota^{-1} \alpha) \circ \mu_H \circ (\iota^{-1} \alpha)^{-1}$ is $M_{X, B}(\bbC_p)$-conjugate to the character $\beta \circ \mu_H \circ \beta^{-1}$. However, we have $\iota^{-1} \alpha \circ \beta^{-1} \in \Isom^\otimes(\omega_B, \omega_B)(\bbC_p) = M_{X, B}(\bbC_p)$, so this is automatic. 

Taking on board Lemma \ref{lem_oddness_in_reductive_subgroups} and the above `motivic' discussion, we arrive at the following unproven principle:
\begin{principle}\label{principle_oddness}
Let $G$ be a reductive group over $\overline{\bbQ}_p$, and let $i : H \to G$ be the embedding of a closed reductive subgroup. Let $\rho : \Gamma_\bbQ \to H(\overline{\bbQ}_p)$ be a geometric representation. Suppose the following:
\begin{enumerate}
\item $i \circ \rho$ is pure.
\item $i \circ \rho$ is odd.
\item The Hodge--Tate cocharacter of $i \circ \rho$ is regular in $G^\circ$.
\end{enumerate}
Then $\rho$ is odd. 
\end{principle}
It is instructive to discuss all of the above in a concrete example. Let us take the representation $\rho : \Gamma_\bbQ \to \GL_3(\overline{\bbQ}_p)$ constructed in \cite{Gee94}, and associated to a non-self dual cuspidal automorphic representation of $\GL_3(\bbA_\bbQ)$ of level $\Gamma_0(128)$. More precisely, we consider the representation constructed there inside the \'etale cohomology of a surface; the computations of Frobenius traces in \emph{op. cit.} support the hypothesis, but do not prove, that these representations are the same as the ones attached to the above-mentioned cuspidal automorphic representation.

The representation $\rho$ is irreducible, by the argument on \cite[p. 400]{Gee94}. In fact, $\rho$ has Zariski dense image in $\GL_3$. (This can be established using some $p$-adic Hodge theory. Write $H$ for the Zariski closure of $\rho(\Gamma_\bbQ)$. Then $H^\circ$ contains the image of a Hodge--Tate cocharacter, which is regular in $\GL_3$. If $H^\circ$ is abelian then there is an isomorphism $\rho \cong \Ind_{\Gamma_L}^{\Gamma_\bbQ} \chi$ for some degree 3 extension $L / \bbQ$ and geometric character $\chi : \Gamma_L \to \overline{\bbQ}_p^\times$. However, the infinity type of $\chi$ must be induced from the maximal CM subfield of $L$, which is totally real. This contradicts the fact that $\rho$ is Hodge--Tate regular. Therefore $H^\circ$ is not abelian. If the derived group of $H^\circ$ has rank 1, then it is equal to $\PGL_2$ in its 3-dimensional representation. The normalizer of $\PGL_2$ in $\GL_3$ is $\mathrm{GO}_3$. Since $\rho$ is not self-dual up to twist (see \cite[p. 400]{Gee94} again) this cannot happen. We see finally that the derived group of $H^\circ$ must have rank 2, and therefore that $H$ is equal to $\GL_3$.)

There exist a weight cocharacter $w$, a Hodge--Tate cocharacter $\mu$, and a complex conjugation $c$ of the form
\[ w(t) = \diag(t^2, t^2, t^2), \]
\[ \mu(t) = \diag(t^2, t, 1), \]
\[ \rho(c) = \left( \begin{array}{ccc} 0 & 0 & 1 \\ 0 & -1 & 0 \\ 1 & 0 & 0 \end{array}\right). \]
Note in particular that $\Ad \rho(c) \circ \mu = w \mu^{-1}$. No twist of $\rho$ is odd, because the odd involutions of $\GL_3$ are outer. The representation $\rho \otimes \epsilon$ is pure of weight 0 and has trivial determinant.

Let $H = \GL_3$, and let $G$ denote the special orthogonal group defined by the matrix
\[ J = \left( \begin{array}{ccc} & & I_3 \\ & 1 & \\ I_3 & & \end{array}\right). \]
We write $R : H \to G$ for the embedding given by $g \mapsto \diag(g, 1, {}^t g^{-1})$. A calculation shows that if $\chi$ is an odd character, then $R \circ (\rho \otimes \chi)$ is odd, in the sense of Definition \ref{def_odd_galois_representation}. Thus if $\chi$ is a geometric odd character, then $R \circ (\rho \otimes \chi)$ is geometric and odd. We note that arguing as in \cite{Har16} or \cite{Sch15}, we should be able to exhibit (the pseudocharacter of) any twist $R \circ (\rho \otimes \chi)$  by an odd geometric character of sufficiently large Hodge--Tate weight as a $p$-adic limit of (pseudocharacters of) $G$-valued representations of Zariski dense image attached to cusp forms on $\Sp_{6}$ with square-integrable archimedean component. Since passing to a $p$-adic limit preserves the conjugacy class of complex conjugation, the oddness of $R \circ (\rho \otimes \chi)$  is a necessary condition for this to be possible. (We note that the oddness of the Galois representations attached to regular $L$-algebraic cusp forms on $\Sp_6$, which is a consequence of the conjectures formulated in \cite{Buz14}, follows from the results of Ta\"ibi \cite{Tai16}.)

However, we cannot conclude that $\rho \otimes \chi$ is odd using Principle \ref{principle_oddness}, because any such twist of $\rho$ will fail one of the conditions there. If $\chi$ is not pure of weight $-2$, then $R \circ (\rho \otimes \chi)$ will not be pure. If $\chi$ is pure of weight $-2$ (for example, if $\chi = \epsilon$), then $R \circ (\rho \otimes \chi)$ will be pure of weight 0, but the Hodge--Tate cocharacter of $R \circ (\rho \otimes \chi)$ will not be regular.

\section{Negative results}\label{sec_positive_results}

Let us fix a prime $p$ and an isomorphism $\iota : \overline{\bbQ}_p \to \bbC$. In this section, we prove the following result (Theorem \ref{thm_intro_automorphy} of the introduction):
\begin{theorem}\label{thm_automorphy}
There exist infinitely pairs $(L, \Pi)$ satisfying the following conditions:
\begin{enumerate}
\item $L \subset \bbC$ is a CM number field and $\Pi$ is a regular $L$-algebraic cuspidal automorphic representation of $\GL_n(\bbA_L)$ such that $\Pi^c \not\cong \Pi^\vee \otimes \chi$ for any character $\chi : L^\times \backslash \bbA_L^\times \to \bbC^\times$.
\item There exists a Shimura datum $(G, X)$ of reflex field $L$ such that the associated Shimura varieties $\Sh_K(G, X)$ are proper and $r_{p, \iota}(\Pi)$ appears as a subquotient of $H^\ast_{\text{\'et}}(\Sh_K(G, X)_{\overline{\bbQ}}, \cF_{\tau,p})$ for some algebraic local system $\cF_\tau$ and for some neat open compact subgroup $K \subset G(\bbA_\bbQ^\infty)$.
\end{enumerate}
\end{theorem}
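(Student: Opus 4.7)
The plan is to construct $\Pi$ as an automorphic induction from a cyclic CM extension $L'/L$, and to exhibit $r_{p,\iota}(\Pi)$ as an irreducible summand of the reducible Galois representation attached to a non-cuspidal, isobaric, CSDC representation $\Pi' = \Pi \boxplus \Pi^{c,\vee}$ on $\GL_{2d}(\bbA_L)$, where $\Pi'$ arises from an endoscopic contribution to the cohomology of a proper unitary similitude Shimura variety with reflex $L$. The exploit is the one hinted at in the introduction: although every irreducible Galois representation attached to a cuspidal CSDC representation contributing to the cohomology of a unitary Shimura variety is conjugate self-dual, the representation $r_{p,\iota}(\Pi \boxplus \Pi^{c,\vee}) \cong r_{p,\iota}(\Pi) \oplus r_{p,\iota}(\Pi)^{c,\vee}$ (itself CSDC as a whole) has $r_{p,\iota}(\Pi)$ as an irreducible subquotient which need not be polarizable.

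Concretely, fix a CM field $L$ with $L^+ \subset L'^+$ for some cyclic CM extension $L'/L$, say of degree $d = 2$ for simplicity, with nontrivial automorphism $\tau$. Choose an algebraic Hecke character $\chi$ of $L'$ which is (i) not $\Gal(L'/L)$-invariant, (ii) of sufficiently regular infinity type that the Hodge-Tate weights of $\Pi$ at each place of $L$ are distinct and those of $\Pi$ and $\Pi^{c,\vee}$ do not collide, and (iii) sufficiently generic on the Galois side. By Arthur--Clozel cyclic automorphic induction, $\Pi := \Ind_{L'/L} \chi$ is a cuspidal regular $L$-algebraic representation of $\GL_2(\bbA_L)$. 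On the Galois side, $R := r_{p,\iota}(\Pi) \cong \Ind_{\Gamma_{L'}}^{\Gamma_L} r_{p,\iota}(\chi)$, and a Mackey analysis (using $R^{c_L} \cong \Ind r_{p,\iota}(\chi)^{c_{L'}}$ and the projection formula $R^\vee \otimes \psi \cong \Ind(r_{p,\iota}(\chi)^{-1} \cdot \psi|_{L'})$) shows that $R$ is conjugate self-dual up to twist iff either $r_{p,\iota}(\chi)^{(c_{L'}+1)(\tau-1)}$ or $r_{p,\iota}(\chi)^{(c_{L'}+\tau)(\tau-1)}$ is trivial. Both are non-generic conditions on $\chi$, and a generic $\chi$ satisfies neither; hence $\Pi$ is cuspidal, regular $L$-algebraic, and not conjugate self-dual up to twist.

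Now form $\Pi' := \Pi \boxplus \Pi^{c,\vee}$ on $\GL_4(\bbA_L)$: this is a non-cuspidal (since $\Pi \not\cong \Pi^{c,\vee}$) regular $L$-algebraic CSDC automorphic representation, corresponding to a tempered elliptic Arthur parameter $\phi_\Pi \oplus \phi_\Pi^{c,\vee}$ for a unitary similitude group of rank $4$. By the Arthur-type classification for unitary similitude groups (Mok; Kaletha--Minguez--Shin--White), $\Pi'$ descends to a discrete automorphic representation $\pi'$ of $G(\bbA_\bbQ)$, where $G = \mathrm{GU}(V)/\bbQ$ with $V$ a $4$-dimensional hermitian space over $L$. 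We choose $V$ to arise from a central division algebra over $L$ with involution of the second kind, so that $G$ is anisotropic modulo center and $\Sh_K(G, X)$ is proper for any neat $K$; we choose signatures at the infinite places of $L^+$ generically enough that the reflex field of $(G, X)$ is exactly $L$, while the archimedean packet of $\pi'$ is cohomological with respect to some algebraic local system $\cF_\tau$. By Kottwitz's description of the cohomology of proper PEL-type unitary Shimura varieties (in the form worked out by Shin and Morel for the unitary similitude case), the $\pi'$-isotypic piece of $H^\ast_{\text{\'et}}(\Sh_K(G, X)_{\overline{\bbQ}}, \cF_{\tau, p})$ carries a Galois action isomorphic, up to multiplicity, to $r_{p,\iota}(\Pi') \cong r_{p,\iota}(\Pi) \oplus r_{p,\iota}(\Pi)^{c, \vee}$. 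Since $\Pi$ is cuspidal, the summand $r_{p,\iota}(\Pi)$ is irreducible and is the desired subquotient. Varying $L$, $L'$, and $\chi$ produces infinitely many pairs $(L, \Pi)$.

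The main obstacle is the geometric identification in the last step, which has two components. First, one must invoke Arthur's multiplicity formula for $\mathrm{GU}$ to confirm that $\pi'$ contributes non-trivially to the discrete spectrum with the expected archimedean packet constituent; this is essentially known after recent work on the unitary Arthur classification, but in the endoscopic setting relevant here one must verify that the relevant sign character on the component group of the parameter is consistent. Second, one must apply Kottwitz's formula for the zeta function of the Shimura variety to identify the Galois action on the $\pi'$-isotypic piece with $r_{p,\iota}(\Pi')$; this is known in the proper PEL unitary similitude case, though the bookkeeping for the specific endoscopic parameter $\phi_\Pi \oplus \phi_\Pi^{c,\vee}$ requires care.
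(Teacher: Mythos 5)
There is a genuine gap at the descent step, and it is precisely the obstruction that makes the theorem non-trivial. The isobaric sum $\Pi' = \Pi \boxplus \Pi^{c,\vee}$, with $\Pi$ cuspidal on $\GL_2(\bbA_L)$ and \emph{not} conjugate self-dual up to twist, is conjugate self-dual as a whole, but the associated parameter $\phi_\Pi \oplus \phi_\Pi^{c,\vee}$ is \emph{not} elliptic (hence not discrete) for $U(4)$ or $\mathrm{GU}(4)$: in the Mok/KMSW classification a discrete (or elliptic endoscopic) parameter must be a sum of \emph{distinct conjugate self-dual} constituents of the correct parity, whereas here neither summand is conjugate self-dual. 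Concretely, the centralizer of your parameter in $\widehat{G}$ contains a positive-dimensional torus modulo the centre (the $\diag(a I_2, a^{-1} I_2)$'s), so the parameter factors through the Siegel Levi $\GL_2(L)$ of the quasi-split form and contributes only to the continuous spectrum there. For the anisotropic inner form coming from a division algebra the spectrum is entirely discrete and such a parameter is not relevant, so the representation $\pi'$ you need simply does not exist; the $\pi'$-isotypic part of $H^\ast_{\text{\'et}}(\Sh_K(G,X)_{\overline{\bbQ}}, \cF_{\tau,p})$ is zero. (Your first step — producing a cuspidal, regular $L$-algebraic $\Pi = \Ind_{L'/L}\chi$ that is not conjugate self-dual up to twist — is fine; it is the passage to the Shimura variety that fails.)

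The paper's proof circumvents exactly this. It starts from a character $\psi$ of a degree-$q^2$ cyclic CM extension $E/K$ that \emph{is} conjugate self-dual, so that $\Sigma = $ the automorphic induction to $\GL_{q^2}$ is RACSDC after twist and genuinely descends to a discrete (in fact essentially square-integrable at infinity) representation of a division-algebra unitary similitude group. The non-self-dual representation $r_p = \Ind_{\Gamma_{E_0}}^{\Gamma_K}(\chi_p \otimes \varphi_p)$ is then extracted not from the parameter of an isobaric sum but from the \emph{Shimura cocharacter} representation: by choosing signatures $(n-1,1)^a \times (1,n-1)^b \times (0,n)^{\cdots}$, Kottwitz's theorem identifies the relevant part of cohomology with a twist of $(\rho_p^{\otimes a} \otimes (\rho_p^\vee)^{\otimes b})^{|a(\Sigma_G^\infty)|}|_{\Gamma_L}$, and Proposition \ref{prop_zariski_closure} guarantees that $r_p$ occurs as a subquotient of such a tensor product. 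The moral (stated in the introduction) is that irreducible subquotients of $r^{\otimes a} \otimes (r^\vee)^{\otimes b}$ for $r$ conjugate self-dual need not be conjugate self-dual up to twist; your proposal instead tries to feed a non-self-dual object directly into the automorphic spectrum of a unitary group, which is exactly what cannot be done.
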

Let $q$ be an odd prime, and let $K$ be a CM number field containing an imaginary quadratic field.  Fix a CM type $\Phi_K$ of $K$. If $K' / K$ is any CM extension, then we write $\Phi_{K'}$ for the induced CM type. Let $E / K$ be a cyclic CM extension of degree $q^2$, and let $E_0$ denote the unique intermediate subfield of $E / K$.
\begin{lemma}\label{lem_existence_of_CSD_character}
Fix integers $(n_\tau)_{\tau \in \Phi_{E}}$. Then we can find a character $\psi : E^\times \backslash \bbA_E^\times \to \bbC^\times$ and a finite place $v$ of $K$ split over $K^+$ and inert in $E$, all satisfying the following conditions:
\begin{enumerate}\item $\psi \circ \mathbf{N}_{E / E^+} = \| \cdot \|^{1-q^2}$ and $\psi|_{(E \otimes_{E,\tau} \bbC)^\times}(z) = z^{n_\tau} \overline{z}^{1 - q^2 - n_\tau}$ for all $\tau \in \Phi_{E}$.
\item Let $w$ denote the unique place of $E$ lying above $v$. Then for each $g \in \Gal(E / K)$, we have $\psi|_{E_w^\times} \neq \psi^g|_{E_w^\times}$.
\end{enumerate}
\end{lemma}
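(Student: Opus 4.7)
The plan is to first construct some $\psi_0$ satisfying condition (1) with the prescribed infinity type using the theory of algebraic Hecke characters on CM fields, and then to modify $\psi_0$ by a suitable anticyclotomic twist so that condition (2) holds at a well-chosen auxiliary place $v$. Existence of such a $\psi_0$ is standard: the prescribed archimedean type has constant weight $1-q^2$ across $\Phi_E$, which is compatible on the archimedean part with the required restriction $\|\cdot\|^{1-q^2}$ to $\bbA_{E^+}^\times$; the usual construction (going back to Weil) produces $\psi_0$ after twisting a first candidate with matching infinity type by a suitable finite-order character of $\bbA_E^\times/E^\times$ whose restriction to $\bbA_{E^+}^\times$ corrects the discrepancy on the finite part (such a character exists by extending via Pontryagin duality from $\bbA_{E^+}^\times/E^{+,\times}$ to $\bbA_E^\times/E^\times$).

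Next I select $v$. Applying Chebotarev density to the extension $E/K^+$ of degree $2q^2$, I look for primes $u$ of $K^+$ whose Frobenius conjugacy class in $\Gal(E/K^+)$ lies in $\Gal(E/K)$ and generates this cyclic group; such classes account for a positive density $\phi(q^2)/(2q^2) = (q-1)/(2q)$ of primes. Any such $u$ splits in $K/K^+$, and each place $v$ of $K$ above $u$ is inert in $E$. Fix such a $v$, let $w$ be the unique place of $E$ over $v$, and note $c(w) \neq w$. The extension $E_w/K_v$ is then unramified cyclic of degree $q^2$, with unique nontrivial intermediate field $E_{0,w_0}$. By Hilbert 90 for cyclic local extensions, the kernel of $N_{E_w/E_{0,w_0}}$ is generated by elements $\sigma(x)/x$, so a character $\alpha$ of $E_w^\times$ is fixed by some nontrivial element of $\Gal(E_w/K_v)$ if and only if $\alpha$ factors through the local norm $N_{E_w/E_{0,w_0}}$. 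Hence condition (2) amounts to the requirement that $\psi_w$ does not factor through this norm.

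Finally, I modify $\psi_0$ by twisting by a finite-order character $\chi$ of $\bbA_E^\times/E^\times$ with $\chi_\infty = 1$ and $\chi\chi^c = 1$; such a twist preserves both the infinity type and condition (1), since $(\chi \circ N_{E/E^+})(x) = \chi(x)\chi^c(x) = 1$. I produce $\chi = \eta/\eta^c$ for an auxiliary finite-order Hecke character $\eta$ of $E$ with $\eta_{c(w)} = 1$ and $\eta_w$ equal to a prescribed (sufficiently ramified) character of $E_w^\times$; existence of such $\eta$ follows from Artin reciprocity applied to an appropriate ray class group of $E$ with conductor concentrated at $w$, once one checks that the prescribed $\eta_w$ is trivial on the (finitely generated) image of global $S$-units of $E$ in $E_w^\times$, for $S = \{w, c(w), \infty\}$, which is automatic if the conductor at $w$ is chosen deep enough. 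This gives $\chi_w = \eta_w/\eta_{c(w)} = \eta_w$, which can thus be prescribed freely (of large enough conductor). Since the characters of $E_w^\times$ that do factor through $N_{E_w/E_{0,w_0}}$ form a proper closed subgroup of $\widehat{E_w^\times}$, I choose $\eta_w$ so that $\psi_{0,w}\eta_w$ does not factor through this norm; then $\psi = \psi_0 \chi$ satisfies both (1) and (2), as required.

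I expect the main obstacle to be the Grunwald--Wang-style existence step for $\eta$: producing a global finite-order Hecke character of $E$ with independently prescribed components at the two distinct places $w$ and $c(w)$. This is handled by a standard ray class group computation, enabled by the fact that $w \neq c(w)$ (so there is no local linkage between them) and by the freedom to take the conductor of $\eta$ at $w$ as deep as needed to kill the image of the relevant $S$-units.
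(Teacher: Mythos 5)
The paper's own proof is a one-line citation of \cite[Lemma A.2.5]{Bar14}, and your construction follows essentially the same route as the proof of that lemma: build a base character with the prescribed infinity type and with $\psi\psi^c=\|\cdot\|^{1-q^2}$, choose $v$ by Chebotarev in $\Gal(E/K^+)\cong \bbZ/q^2\times\bbZ/2$ (your density count is right, and $E/K^+$ is indeed Galois since complex conjugation is an automorphism of the CM field $E$ commuting with $\Gal(E/K)$), reduce condition (2) to the statement that $\psi_w$ does not factor through $N_{E_w/E_{0,w_0}}$ (your Hilbert 90 reduction is correct, using that $\bbZ/q^2$ has a unique minimal nontrivial subgroup), and finally twist by an anticyclotomic character $\eta/\eta^c$. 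All of this is sound.

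The one genuine gap is the existence step for $\eta$, which you yourself flag as the main obstacle but then justify incorrectly. The assertion that triviality of $\eta_w$ on the image of the $S$-units ``is automatic if the conductor at $w$ is chosen deep enough'' is backwards: in the ray class exact sequence $\cO_{E,S}^\times \to (\cO_E/w^N)^\times \to \mathrm{Cl}_{w^N}(E) \to \cdots$ the image of the $S$-units \emph{grows} with $N$, so triviality on it is a genuine constraint on the chosen $\eta_w$, not a consequence of deepening the conductor. What you actually need is that the group of globally realizable local components $\eta_w$ is not contained in the group $\mathcal{N}$ of characters factoring through the norm, i.e.\ of characters trivial on $V=\ker N_{E_w/E_{0,w_0}}$; properness of $\mathcal{N}$ inside the full character group of $E_w^\times$ is not sufficient, and proving directly that $V$ is not contained in the closure of the $S$-unit image is not obvious (a dimension count fails once $[K:\bbQ]$ is large compared to $q$). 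The standard repair: $V$ surjects onto $\ker\bigl(k(w)^\times \to k(w_0)^\times\bigr)$, a nontrivial cyclic group of order $(q_w-1)/(q_{w_0}-1)$; choose $\eta_w$ tamely ramified of prime order $r$ dividing this index and nontrivial on that kernel, and invoke Grunwald--Wang (one is not in the special case since $8\nmid r$) to produce a global finite-order $\eta$ with this component at $w$ and trivial component at $c(w)$. Note also that requiring $\eta_{c(w)}=1$ is stronger than necessary: $\eta$ unramified at $c(w)$ suffices, since unramified characters of $E_w^\times$ are trivial on $V\subset\cO_{E_w}^\times$ and hence do not affect whether $\psi_w$ factors through the norm. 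With this repair the argument is complete.
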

\begin{proof}
This is a special case of \cite[Lemma A.2.5]{Bar14}.
\end{proof}
We now fix a tuple of integers $(n_\tau)_{\tau \in \Phi_{E}}$ with the following properties:
\begin{itemize}
\item For all $\tau, \tau' \in \Phi_E$ such that $\tau \neq \tau'$, we have $| n_\tau - n_{\tau'} | > 1$.
\item For all $\tau_0, \tau_0' \in \Phi_{E_0}$ such that $\tau_0 \neq \tau_0'$, we have 
\[ \sum_{\substack{\tau \in \Phi_E \\ \tau|_{E_0} = \tau_0}} n_\tau \neq  \sum_{\substack{\tau' \in \Phi_E \\ \tau'|_{E_0} = \tau'_0}} n_{\tau'}. \]
\item There exists $\tau \in \Phi_E$ such that the matrix $(n_{\tau g h})_{g, h \in \Gal(E/K)}$ has non-zero determinant (note that this is a circulant matrix).
\end{itemize}
Fix $\psi$ as in Lemma \ref{lem_existence_of_CSD_character}. Let $\psi_p = r_{p, \iota}(\psi) : \Gamma_E \to \overline{\bbQ}_p^\times$.
\begin{lemma}
\begin{enumerate} \item The representation $\rho_p = \Ind_{\Gamma_E}^{\Gamma_K} \psi_p$ is absolutely irreducible, and there is a regular $L$-algebraic cuspidal automorphic representation $\sigma$ of $\GL_{q^2}(\bbA_K)$ such that $r_{p, \iota}(\sigma) \cong \rho_p$.
\item The representation $\rho_p|_{\Gamma_{K_v}}$ is absolutely irreducible, and $\sigma_v$ is a supercuspidal representation of $\GL_{q^2}(K_v)$.
\end{enumerate} 
\end{lemma}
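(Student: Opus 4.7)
The plan is to treat both parts with Mackey's irreducibility criterion for induced representations, combined with Arthur--Clozel's theorem on cyclic automorphic induction.

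For part (1), since $E/K$ is Galois with group $\Gal(E/K)$, the subgroup $\Gamma_E$ is normal of index $q^2$ in $\Gamma_K$. Mackey's criterion then reduces the absolute irreducibility of $\rho_p = \Ind_{\Gamma_E}^{\Gamma_K}\psi_p$ to the assertion that $\psi_p^g \neq \psi_p$ for every nontrivial $g \in \Gal(E/K)$. Local class field theory identifies $\psi_p|_{\Gamma_{E_w}}$ with $\psi|_{E_w^\times}$ compatibly with the Galois action, so condition (2) of Lemma \ref{lem_existence_of_CSD_character} (understood for nontrivial $g$) provides the required inequality after restriction to the decomposition group at $w$. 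Because $\psi$ is a Hecke character of the cyclic extension $E/K$ whose $\Gal(E/K)$-orbit has size $q^2$, Arthur--Clozel's theory of cyclic automorphic induction then produces a cuspidal automorphic representation $\sigma := \mathrm{AI}_{E/K}(\psi)$ of $\GL_{q^2}(\bbA_K)$ satisfying $r_{p,\iota}(\sigma) \cong \rho_p$; local-global compatibility at all places of $K$ is part of the output of the construction. The prescribed infinity type $\psi|_{(E\otimes_{E,\tau}\bbC)^\times}(z) = z^{n_\tau}\overline{z}^{1-q^2-n_\tau}$ makes $\sigma$ $L$-algebraic, and the spacing assumption $|n_\tau - n_{\tau'}| > 1$ for distinct $\tau,\tau' \in \Phi_E$ ensures that at each archimedean place of $K$ the multiset of $n_\tau$ attached to embeddings above that place is regular, giving regularity of $\sigma$.

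For part (2), since $v$ is inert in $E$, the unique place $w$ above $v$ satisfies $[E_w : K_v] = q^2$ with $\Gal(E_w/K_v) = \Gal(E/K)$, and $\rho_p|_{\Gamma_{K_v}} = \Ind_{\Gamma_{E_w}}^{\Gamma_{K_v}}\psi_p|_{\Gamma_{E_w}}$. Mackey again reduces absolute irreducibility of this local induction to the non-invariance of $\psi_p|_{\Gamma_{E_w}}$ under $\Gal(E_w/K_v)$, which is precisely the content of condition (2) of Lemma \ref{lem_existence_of_CSD_character}. Compatibility of local and global automorphic induction identifies $\sigma_v$ with the local automorphic induction of $\psi_w$, whose Langlands parameter is an irreducible representation of the Weil group of $K_v$ with trivial monodromy; by the local Langlands correspondence for $\GL_{q^2}(K_v)$, $\sigma_v$ is therefore supercuspidal.

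The main anticipated difficulty is essentially bookkeeping rather than conceptual: one must correctly package the hypotheses for Arthur--Clozel (cuspidality from the non-invariance of $\psi$, $L$-algebraicity from the archimedean constraints, regularity from the spacing of the $n_\tau$), verify that the local class field theory translation of condition (2) lifts to the $\ell$-adic side, and invoke the precise form of local-global compatibility at $v$ built into the automorphic induction construction rather than rederiving it from scratch.
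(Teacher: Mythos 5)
Your proof is correct and follows essentially the same route as the paper: Mackey's criterion reducing irreducibility to pairwise distinctness of the Galois conjugates of $\psi$, Arthur--Clozel automorphic induction for the existence and cuspidality of $\sigma$, and the inertness of $v$ plus condition (2) of Lemma \ref{lem_existence_of_CSD_character} for the local statement. The only (harmless) difference is that for part (1) the paper verifies the distinctness of the conjugates of $\psi$ by restricting to $(E\otimes_\bbQ\bbR)^\times$ using the regularity of the $n_\tau$, whereas you deduce it from the local condition at $w$; both suffice.
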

\begin{proof}
The irreducibility of $\rho_p$ is equivalent to the following statement: for all $g, h \in \Gal(E/K)$ such that $g \neq h$, $\psi_p^g \neq \psi_p^h$; or for all $g, h \in \Gal(E/K)$ such that $g \neq h$, $\psi^g \neq \psi^h$. This statement is true because it is true after restricting $\psi$ to $(E \otimes_\bbQ  \bbR)^\times \subset \bbA_E^\times$. The existence of $\sigma$ follows from the results of \cite[Theorem 4.2]{Art89a}, and $\sigma$ is cuspidal for the same reason that $\rho_p$ is irreducible: see \cite[Corollary 6.5]{Art89a}. The second part is similar.
\end{proof}
Let $H$ denote the Zariski closure of $\rho_p(\Gamma_K)$ in $\GL_{q^2}(\overline{\bbQ}_p)$, and let $\rho_H : \Gamma_K \to H(\overline{\bbQ}_p)$ denote the tautological representation. The group $H$ sits in a short exact sequence
\[ \xymatrix@1{ 1 \ar[r] & \bbG_m^{\Gal(E/K)} \ar[r] & H \ar[r] & \Gal(E/K) \ar[r] & 1. } \]
(To ensure that the image of $\rho_p$ is Zariski dense in this group, we are using the condition imposed above that the matrix $(n_{\tau g h})_{g, h \in \Gal(E/K)}$ has non-zero determinant.) Recall that $E_0$ is the unique intermediate subfield of $E / K$, and consider the Hecke character $\chi = \psi|_{\bbA_{E_0}^\times}$. Let $\chi_p = r_{p, \iota}(\chi) : \Gamma_{E_0} \to \overline{\bbQ}_p^\times$. Let $H_0$ denote the pre-image in $H$ of $\Gal(E / E_0)$. We can find a character $x : H_0 \to \bbG_m$ such that $x \circ \rho_H|_{\Gamma_{E_0}} = \chi_p$. We can find another character $y : H_0 \to \bbG_m$ such that $y \circ \rho_H|_{\Gamma_{E_0}} = \varphi_p$ is a non-trivial character $\Gal(E / E_0) \to \overline{\bbQ}_p^\times$. Let $R = \Ind_{H_0}^H (x \otimes y)$. Then $R$ is a $q$-dimensional representation of the group $H$ and we have $R \circ \rho_H \cong \Ind_{\Gamma_{E_0}}^{\Gamma_K} (\chi_p \otimes \varphi_p)$. 
\begin{proposition}
With notation as above, the representation $r_p = R \circ \rho_H$ has the following properties:
\begin{enumerate}
\item It is absolutely irreducible and Hodge--Tate regular.
\item There exists a cuspidal, regular $L$-algebraic automorphic representation $\pi$ of $\GL_q(\bbA_K)$ such that $r_{p, \iota}(\pi) \cong r_p$.
\item There does not exist a character $\lambda : \Gamma_K \to \overline{\bbQ}_p^\times$ such that $r_p^c \cong r_p^\vee \otimes \lambda$.
\end{enumerate}
\end{proposition}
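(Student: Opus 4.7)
Absolute irreducibility of $r_p = \Ind_{\Gamma_{E_0}}^{\Gamma_K}(\chi_p \otimes \varphi_p)$ reduces by Mackey's criterion to verifying that $\chi_p \otimes \varphi_p$ has trivial stabilizer in $\Gal(E_0/K)$. Since $\varphi_p$ has finite order (factoring through $\Gal(E/E_0)$), the Hodge--Tate weight of $\chi_p \otimes \varphi_p$ at $\tau_0 \in \Phi_{E_0}$ is $m_{\tau_0} := \sum_{\tau \in \Phi_E,\,\tau|_{E_0} = \tau_0} n_\tau$; by the second condition imposed on $(n_\tau)$ these are pairwise distinct across $\Phi_{E_0}$, ruling out any non-trivial stabilizer and simultaneously giving Hodge--Tate regularity of $r_p$ (its HT weights at $\tau_K \in \Phi_K$ being the $q$ distinct integers $\{m_{\tau_0} : \tau_0|_K = \tau_K\}$). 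For Part (2), the Hecke character $\chi \cdot \varphi$ of $E_0$ is algebraic with regular infinity type and has trivial $\Gal(E_0/K)$-stabilizer, so Arthur--Clozel's cyclic automorphic induction \cite[Theorem 4.2, Corollary 6.5]{Art89a} yields a cuspidal, $L$-algebraic automorphic representation $\pi = AI_{E_0/K}(\chi \cdot \varphi)$ of $\GL_q(\bbA_K)$, with $r_{p,\iota}(\pi) \cong r_p$ by the standard compatibility of cyclic automorphic induction with induction of Galois representations.

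\textbf{Part (3), Mackey reduction.} Suppose for contradiction $r_p^c \cong r_p^\vee \otimes \lambda$ for some $\lambda : \Gamma_K \to \overline{\bbQ}_p^\times$. Fix a complex conjugation $\tilde c \in \Gamma_{K^+}$; since $E_0$ is CM, $\tilde c$ normalizes $\Gamma_{E_0}$, whence $r_p^c = \Ind_{\Gamma_{E_0}}^{\Gamma_K}\bigl((\chi_p \varphi_p)^c\bigr)$ and $r_p^\vee \otimes \lambda = \Ind_{\Gamma_{E_0}}^{\Gamma_K}\bigl(\chi_p^{-1} \varphi_p^{-1} \lambda|_{\Gamma_{E_0}}\bigr)$. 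Both are irreducible inductions from a normal subgroup of prime index $q$, so the asserted isomorphism is equivalent to the existence of $g \in \Gal(E_0/K)$ with $(\chi_p \varphi_p)^c = \bigl(\chi_p^{-1} \varphi_p^{-1} \lambda|_{\Gamma_{E_0}}\bigr)^g$. The condition $\psi \psi^c = \|\cdot\|_E^{1-q^2}$ on $\psi$ yields $\chi \chi^c = \|\cdot\|_{E_0}^{q(1-q^2)}$, so the HT weight of $(\chi_p \varphi_p)^c$ at $\tau_0$ is $q(1-q^2) - m_{\tau_0}$ while that of the right-hand side at $\tau_0$ is $-m_{\tau_0 g} + \ell_{\tau_K}$, where $\ell_{\tau_K}$ denotes the HT weight of $\lambda$ at $\tau_K = \tau_0|_K$. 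Summing the resulting equation over the $q$ lifts $\tau_0$ of a fixed $\tau_K$ cancels the $\tau_0$-dependence, forcing $\ell_{\tau_K} = q(1-q^2)$ and then $m_{\tau_0} = m_{\tau_0 g}$ for every $\tau_0$; since $\Gal(E_0/K) \cong \bbZ/q$ acts freely on the fiber above $\tau_K$ and the $m_{\tau_0}$ are distinct by condition (2) on $(n_\tau)$, this forces $g = 1$. The residual identity is $\lambda|_{\Gamma_{E_0}} = \chi_p \chi_p^c \cdot \varphi_p \varphi_p^c$.

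\textbf{Part (3), local contradiction and main obstacle.} Restrict this global identity to the decomposition group at the unique place $w_0$ of $E_0$ above $v$, where $v$ is the finite place of $K$ provided by Lemma \ref{lem_existence_of_CSD_character}, inert in $E$ and split over $K^+$. Translating through local class field theory, the resulting local identity encodes a symmetry relating $\pi_v$ to $\pi_{v^c}^\vee$, i.e.\ morally asserts that the supercuspidal local component $\pi_v = AI_{E_{0,w_0}/K_v}(\chi_{w_0} \varphi_{w_0})$ is essentially conjugate self-dual under local Langlands. The primitivity condition $\psi_w \neq \psi_w^g$ for non-trivial $g \in \Gal(E_w/K_v)$, furnished by Lemma \ref{lem_existence_of_CSD_character}(2), obstructs this symmetry: any such relation would force a specific Galois-conjugacy of $\psi_w$ with a twist of itself, contradicting primitivity. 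The main obstacle of the proof is precisely this final step, namely extracting the correct local consequence at $v$ of the hypothetical global isomorphism and then using the primitivity of $\psi_w$ to defeat it; the intermediate twists by the finite-order characters $\varphi_p, \varphi_p^c$ (which are $\Gal(E_0/K)$-invariant, since $\Gal(E/K)$ is abelian) make the bookkeeping delicate and are the reason the Lemma's primitivity hypothesis is formulated at the \emph{larger} field $E_w$ rather than at $E_{0,w_0}$.
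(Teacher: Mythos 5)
Parts (1) and (2), and the Mackey/Hodge--Tate reduction in your part (3), coincide with the paper's argument: irreducibility and Hodge--Tate regularity via the pairwise distinctness of the $m_{\tau_0}$, existence of $\pi$ via automorphic induction from \cite{Art89a}, and then, assuming $r_p^c \cong r_p^\vee \otimes \lambda$, forcing $g = 1$ by comparing Hodge--Tate weights and arriving at the residual identity $\lambda|_{\Gamma_{E_0}} = \chi_p\chi_p^c\varphi_p\varphi_p^c = \epsilon^{q(q^2-1)}\varphi_p^2$ (the paper notes $\varphi_p^c = \varphi_p$). Up to this point your write-up is correct and follows the paper.

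The last step of part (3) is a genuine gap, and you flag it yourself as ``the main obstacle''. What you propose --- restricting to the decomposition group at $v$ and invoking the primitivity $\psi|_{E_w^\times} \neq \psi^g|_{E_w^\times}$ from Lemma \ref{lem_existence_of_CSD_character}(2) --- is never actually executed (``morally asserts'' is doing all the work), and it is not the paper's route: that local condition is used only to make $\rho_p|_{\Gamma_{K_v}}$ irreducible and $\sigma_v$ supercuspidal, which is needed later when choosing the division algebra $D$. Moreover the identity you must defeat is the assertion that the character $\epsilon^{q(q^2-1)}\varphi_p^2$ of $\Gamma_{E_0}$ extends to $\Gamma_K$, and this is insensitive to localization at $v$: there $E_w/K_v$ is again cyclic of degree $q^2$, so the same extension question arises locally with the same answer. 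The paper instead concludes purely globally, claiming that $\varphi_p^2$ does not extend to $\Gamma_K$ (``otherwise $E/K$ would have Galois group $(\bbZ/q\bbZ)^2$''). Be aware that even this step deserves scrutiny: since $\Gal(E/K)$ is cyclic of order $q^2$ and $\overline{\bbQ}_p^\times$ is divisible, every character of the subgroup $\Gal(E/E_0)$ does extend to a character of $\Gal(E/K)$ (necessarily of order $q^2$); the paper's parenthetical only excludes extensions $\mu$ with $\mu^q = 1$. So the final contradiction is the delicate heart of part (3), and your proposal does not supply it.
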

\begin{proof}
If $\tau_0 : E_0 \hookrightarrow \overline{\bbQ}_p$ is an embedding, let $m_{\tau_0} = \mathrm{HT}_{\tau_0}(\chi)$. Then we have
\[ m_{\tau_0} = \sum_{\substack{\tau : E \hookrightarrow \overline{\bbQ}_p \\ \tau|_{E_0} = \tau_0}} n_\tau. \]
In particular, we see that the $m_{\tau_0}$, $\tau_0 \in \Phi_{E_0}$, are pairwise distinct, and that the representation $r_p$ is Hodge--Tate regular. This representation is irreducible because the conjugates $(\chi_p \otimes \varphi_p)^g$ are pairwise distinct as $g \in \Gal(E_0 / K)$ varies: in fact, these characters already have distinct Hodge--Tate weights. The existence of $\pi$ is again a consequence of \cite[Theorem 4.2]{Art89a}.

It remains to show that $r_p$ is not conjugate self-dual up to twist. Let $\lambda : \Gamma_K \to \overline{\bbQ}_p^\times$ be a character, and suppose that $r_p^c \cong r_p^\vee \otimes \lambda$. Looking at determinants, we see that for each embedding $\tau : K \hookrightarrow \overline{\bbQ}_p$, we have $\mathrm{HT}_\tau(\lambda) = q(1-q)^2$. Restricting to $\Gamma_{E_0}$, we see that there exists $g \in \Gal(E_0 / K)$ such that $\chi_p^c \otimes \varphi_p^c = (\chi_p^\vee \otimes \varphi_p^\vee)^g \otimes \lambda|_{\Gamma_{E_0}}$. Passing to Hodge--Tate weights, this gives for any $\tau_0 : E_0 \hookrightarrow \overline{\bbQ}_p$:
\[ m_{\tau_0 c } + m_{\tau_0 g} = q(1-q^2). \]
Since we also have $m_{\tau_0} + m_{\tau_0 c} = q (1-q^2)$, we find $m_{\tau_0} = m_{\tau_0 g}$, hence $g = 1$ (using again Hodge--Tate regularity of $r_p$). This forces 
\[ \lambda|_{\Gamma_{E_0}} = \chi_p \chi_p^c \varphi_p \varphi_p^c = \epsilon^{q(q^2-1)} \varphi_p \varphi_p^c = \epsilon^{q(q^2-1)} \varphi_p^2. \]
(Note that $\varphi = \varphi^c$ because $\varphi$ factors through the Galois group of a CM extension of the CM field $K$.) However, the character $\varphi_p^2$ does not extend to $\Gamma_K$ (otherwise $E / K$ would have Galois group $(\bbZ / q \bbZ)^2$). This contradiction completes the proof.
\end{proof}
We now apply the following general result.
\begin{proposition}\label{prop_zariski_closure}
Let $\Gamma$ be a profinite group, and let $\rho : \Gamma \to \GL_n(\overline{\bbQ}_p)$ be a continuous semisimple representation. Let $H$ denote the Zariski closure of $\rho(\Gamma)$, and let $R : H \to \GL(V)$ be a finite-dimensional irreducible representation. Then there exist integers $a, b \geq 0$ such that $R \circ \rho$ occurs as a subquotient of $\rho^{\otimes a} \otimes (\rho^\vee)^{\otimes b}$.
\end{proposition}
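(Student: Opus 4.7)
The plan is to reduce the statement to a classical fact from the representation theory of reductive algebraic groups, applied to the Zariski closure $H$. The tautological $n$-dimensional representation $V = \overline{\bbQ}_p^n$ of $H$ (the restriction of the standard representation of $\GL_n$) is, by construction, $\rho$ when composed with the map $\Gamma \to H(\overline{\bbQ}_p)$. So the strategy is to show that $R$ is a subquotient of $V^{\otimes a} \otimes (V^\vee)^{\otimes b}$ in $\mathrm{Rep}(H)$ and then simply pull back along $\rho$.

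First, I would observe that since $\rho$ is semisimple, the tautological representation $V$ is semisimple as an $H$-module: a subspace of $V$ is $\rho(\Gamma)$-stable if and only if it is $H$-stable (the stabilizer of a subspace in $\GL_n$ is Zariski-closed, hence contains $H$ as soon as it contains $\rho(\Gamma)$), so $\rho(\Gamma)$-invariant complements are $H$-invariant. In characteristic zero, the existence of a faithful semisimple representation forces $H^\circ$ to be reductive, so $H$ is a (possibly disconnected) reductive group in the sense of this paper; in particular $\mathrm{Rep}(H)$ is semisimple.

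Next, I would invoke the standard Tannakian fact: if $H$ is a reductive group over a field of characteristic zero and $V$ is a faithful finite-dimensional representation of $H$, then every irreducible finite-dimensional representation of $H$ appears as a direct summand of $V^{\otimes a} \otimes (V^\vee)^{\otimes b}$ for some $a, b \geq 0$. (The Tannakian subcategory generated by $V$ is a full Tannakian subcategory of $\mathrm{Rep}(H)$ corresponding, via Tannakian duality, to a quotient $H \twoheadrightarrow H'$; faithfulness of $V$ forces the kernel to be trivial, so this subcategory is all of $\mathrm{Rep}(H)$. Combined with semisimplicity one gets a single summand.) Applying this to $V$ and $R$ produces the required $a$ and $b$; pulling back along $\rho$ and using $V \circ \rho = \rho$ exhibits $R \circ \rho$ as a direct summand, hence a subquotient, of $\rho^{\otimes a} \otimes (\rho^\vee)^{\otimes b}$.

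The main (mild) obstacle is the tensor-generation statement in the second step, whose precise formulation needs some care when $H$ is disconnected. This is however a classical result and can in any event be reduced to the connected case by applying it to $H^\circ$ (with the restriction of $V$, which is still faithful and semisimple) and observing that irreducible $H$-representations occur in induced representations of irreducible $H^\circ$-representations.
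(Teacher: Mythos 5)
Your proposal is correct and follows essentially the same route as the paper: observe that $H$ is reductive because the tautological representation is faithful and semisimple (semisimplicity being inherited from $\rho$), reduce to showing that $R$ occurs in $r^{\otimes a}\otimes(r^\vee)^{\otimes b}$ for the tautological representation $r$, and invoke the standard Tannakian tensor-generation fact (the paper cites \cite[Proposition 3.1(a)]{Del82} for exactly this). The extra details you supply (Zariski-closedness of stabilizers, upgrading subquotient to direct summand via semisimplicity, the remark on disconnected $H$) are accurate fillings-in of what the paper leaves implicit.
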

\begin{proof}
The group $H$ is reductive as it has a faithful semisimple representation (because $\rho$ is semisimple). Let $r$ denote the tautological faithful representation of $H$ on $V$. It then suffices to find integers $a,b \geq 0$ such that $R$ occurs in $r^{\otimes a} \otimes (r^{\vee})^{\otimes b}$. This is presumably standard, see e.g. \cite[Proposition 3.1(a)]{Del82}.
\end{proof}
 By Proposition \ref{prop_zariski_closure}, we can find integers $a, b \geq 0$ such that $r_p$ appears as a subquotient of $\rho_p^{\otimes a} \otimes (\rho_p^\vee)^{\otimes b}$. Let $\ell$ be the residue characteristic of the place $v$. We now fix a cyclic totally real extension $L_0 / \bbQ$ of prime degree $d > a + b$ and in which $\ell$ splits, and set $L = K \cdot L_0$. We observe that this implies the following:
\begin{enumerate}
\item The base change $\pi_L$ is cuspidal, and $r_p|_{\Gamma_L}$ is irreducible and still not conjugate self-dual up to twist. Indeed, $\Gal(E/K)$ is linearly disjoint from $\Gal(L / K)$, so we can just run the above arguments again with $\psi_p|_{\Gamma_L}$ instead of $\psi_p$.
\item The place $v$ splits in $L$, so that if $w$ is a place of $L$ dividing $v$, then $\sigma_{L, w}$ is supercuspidal. 
\end{enumerate}
Let $\Sigma = \sigma_L$ and $\Pi = \pi_L$. We have now almost completed the proof of Theorem \ref{thm_automorphy}: we have constructed, from the data of the extension $E / K$ and the character $\psi$, an automorphic representation $\Pi$ which is regular $L$-algebraic and cuspidal but \emph{not} conjugate self-dual up to twist. In order to complete the proof, we must show that there exists $\tau \in \Phi_L$ and a Shimura datum $(G, X)$ of reflex field $\tau(L)$ such that a twist of $r_{p, \iota}(\Pi)$ by a geometric character appears as a subquotient of $H^\ast_{\text{\'et}}(\Sh_K(G, X)_{\overline{\bbQ}}, \cF_p)$ for some choice of algebraic local system $\cF$.

Fix an embedding $\tau_0 \in \Phi_K$ and disjoint subsets $\Sigma_0, \Sigma_1 \subset \Phi_L$ of embeddings extending $\tau_0$, such that $ | \Sigma_0 | = a$ and $| \Sigma_1 | = b$. Suppose given the following data:
\begin{enumerate}
\item A division algebra $D$ over $L$ of rank $n = q^2$ and centre $L$, together with an involution $\ast : D \to D$ such that $\ast|_L = c$.
\item A homomorphism $h_0 : \bbC \to D \otimes_\bbQ \bbR$ of $\bbR$-algebras such that $h_0(z)^\ast = h_0(\overline{z})$ for all $z \in \bbC$.
\end{enumerate}
Then we can associate to $(D, \ast, h_0)$ a unitary group $G_{00}$ over $L^+$, its restriction of scalars $G_0 = \Res^{L^+}_\bbQ G_{00}$, a unitary similitude group $G$ over $\bbQ$ containing $G_0$, and a Shimura datum $(G, X)$ (see \cite[\S 1]{Kot92} for details). We can choose this data so that the following conditions are satisfied (cf. \cite[\S 2]{Clo91}):
\begin{enumerate}
\item At each place $w | v$ of $L$, $D_w$ is a division algebra of invariant $1 / n$. At each place $w \nmid v v^c$ of $L$, $D$ is split and the group $G_{0, w|_{L^+}}$ is quasi-split.
\item For each $\tau \in \Sigma_0$, we have $n(\tau) = n-1$. For each $\tau \in \Sigma_1$, we have $n(\tau) = 1$. For every other $\tau \in \Phi_L$, we have $n(\tau) = 0$.
\end{enumerate}
The integers $n(\tau)$ here are as on \cite[p. 655]{Kot92}; the second condition here means that we have an isomorphism
\[ G_{0, \bbR} \cong U(n-1, 1)^a \times U(1, n-1)^b \times U(0, n)^{[L^+ : \bbQ] - a - b}. \]
We note that the reflex field of $(G, X)$ is equal to $\tau(L)$, for any $\tau \in \Sigma_0$.

The automorphic representation $\Sigma \otimes \| \det \|^{\frac{1-q^2}{2}}$ is RACSDC\footnote{Regular algebraic, conjugate self-dual, cuspidal, cf. \cite{Clo08}} and descends to an automorphic representation $\Sigma_{G_0}$ of $G_0(\bbA)$ with $\Sigma_{G_0, \infty}$ essentially square integrable and of strictly regular infinitesimal character. (This follows from e.g. the main theorems of \cite{Kal17}. Since we are dealing here with a `simple' Shimura variety, it is possible to prove the existence of this descent much more easily, along the same lines as in the proof of \cite[Proposition 2.3]{Clo93}, making appropriate changes to deal with the presence of more than one non-compact factor at infinity.) Arguing as in the proofs of \cite[Theorem VI.2.9]{Har01} and \cite[Lemma VI.2.10]{Har01}, we can extend $\Sigma_{G_0}$ to a representation $\Sigma_G$ of $G(\bbA_\bbQ)$ such that the integer $a(\Sigma_{G}^\infty)$ of \cite{Kot92} is non-zero. We can therefore apply \cite[Theorem 1]{Kot92} to conclude that there is an algebraic local system $\cF$ such that the $\Sigma_G^\infty$-part of $H^\ast_{\text{\'et}}(\Sh_K(G, X)_{\overline{\bbQ}}, \cF_{p})$ is isomorphic to a character twist of the representation
\[ (\rho_p^{\otimes a} \otimes (\rho_p^\vee)^{\otimes b})^{|a(\Sigma_{G}^\infty)|}|_{\Gamma_L}. \]
 In particular, it admits a twist of the representation $r_p|_{\Gamma_L} = r_{p, \iota}(\Pi)$ by a geometric character as a subquotient. This completes the proof of Theorem \ref{thm_automorphy}. (It is clear that we can generate infinitely many pairs $(L, \Pi)$ just by varying our initial choices.)
\section{Conjectures on Galois representations}\label{sec_conjectures_on_galois_representations}

Let $G$ be a reductive group over $\bbQ$, and let ${}^L G$ be its $L$-group. In order to avoid a proliferation of subscripts, we will in this section fix a prime $p$ and an isomorphism $\iota : \overline{\bbQ}_p \to \bbC$, and write ${}^L G$ also for ${}^L G(\bbC)$, ${}^L G(\overline{\bbQ}_p)$, ${}^L G_\bbC$ and ${}^L G_{\overline{\bbQ}_p}$. We hope that in each case it will be clear from the context exactly which of these groups is intended. In order to analyse the cohomology of Shimura varieties in the next section, we introduce the formalism of the Langlands group, local and global $L$-parameters, and finally local and global $A$-parameters. We will make predictions using these ideas, and state precise conjectures which are independent of the existence of the Langlands group.

Following \cite{Art02}, the global Langlands group should be a locally compact topological group which is an extension
\[ \xymatrix@1{ 1 \ar[r] & K_\bbQ \ar[r] & L_\bbQ \ar[r] & W_\bbQ \ar[r] & 1, } \]
where $W_\bbQ$ is the Weil group of $\bbQ$. For each place $v$ of $\bbQ$, there should be a continuous embedding $L_{\bbQ_v} \to L_\bbQ$, defined up to conjugacy, where $L_{\bbQ_v}$ is the local Langlands group:
\[ L_{\bbQ_v} = \left\{ \begin{array}{cc} W_{\bbQ_v} \times \mathrm{SU}_2(\bbR) & v \text{ non-archimedean;} \\ W_{\bbQ_v} & v \text{ archimedean.} \end{array}\right. \]
The irreducible $n$-dimensional continuous complex representations of the group $L_\bbQ$ should be in bijection with the cuspidal automorphic representations of $\GL_n(\bbA_\bbQ)$. More generally, if $\pi$ is an essentially tempered automorphic representation of $G(\bbA_\bbQ)$, then one expects that there should be a corresponding continuous homomorphism $\phi : L_\bbQ \to {}^L G$ with the property that for each place $v$ of $\bbQ$, $\pi_v$ is in the $L$-packet corresponding to $\phi|_{L_{\bbQ_v}}$. (To formulate this statement supposes that the local Langlands correspondence for $G(\bbQ_v)$ is known. It thus has an unconditional sense at least if either $v$ is archimedean, or $v$ is non-archimedean and $\pi_v$ is unramified.) The homomorphism $\phi$ should be an $L$-parameter, i.e.\ it should be semisimple, and the projection $L_\bbQ \to \pi_0({}^L G)$ should factor through the canonical surjection $L_\bbQ \to \Gamma_\bbQ \to \pi_0({}^L G)$. The condition that $\pi$ is essentially tempered should imply that the image of $\phi$ is essentially bounded, i.e.\ bounded modulo the centre of ${}^L G$.

Let $\underline{W} = (\bbG_m \times \bbG_m) \rtimes \{ 1, c \}$, where $c$ acts by swapping factors. Then there is an embedding $L_\bbR \to \underline{W}(\bbC)$, which sends $z$ to $(z, \overline{z}) \rtimes 1$ and $j$ to $(-i, -i) \rtimes c$. We say that a homomorphism $\phi_\infty : L_\bbR \to {}^L G$ is $L$-algebraic if it is the restriction to $L_\bbR$ of a map $\underline{W}(\bbC) \to {}^L G(\bbC)$ which comes from a morphism $\underline{W}_\bbC \to {}^L G_\bbC$ of algebraic groups. In this case we write $a_{\phi_\infty} : \underline{W}_\bbC \to {}^L G_\bbC$ for the corresponding morphism of algebraic groups, and call it the algebraic $L$-parameter corresponding to $\phi_\infty$. We say that an irreducible admissible representation of $G(\bbR)$ is $L$-algebraic if its Langlands parameter is $L$-algebraic, and that an automorphic representation $\pi$ of $G(\bbA_\bbQ)$ is $L$-algebraic if $\pi_\infty$ is. 

We say that an $L$-parameter $\phi : L_\bbQ \to {}^L G$ is $L$-algebraic if $\phi|_{L_{\bbR}}$ is $L$-algebraic. Langlands has suggested that there should be a morphism from $L_\bbQ$ to the motivic Galois group of $\bbQ$ (with $\bbC$-coefficients). Based on the conjectures in \cite{Buz14} (see also \cite[\S 6]{Art02}), one can guess that a morphism $\phi : L_\bbQ \to {}^L G$ factors through the motivic Galois group if and only if $\phi$ is $L$-algebraic. Passing to $p$-adic realizations, and bearing in mind the discussion at the end of \S \ref{sec_galois_representations}, this leads us to predict that for any $L$-algebraic morphism $\phi : L_\bbQ \to {}^L G$, there exists a continuous homomorphism $\rho_\phi : \Gamma_\bbQ \to {}^L G$ satisfying the following conditions:
\begin{enumerate}
\item $\rho_\phi$ is geometric and mixed. (We recall from Definition \ref{def_mixed} that the weight cocharacter $w$ is then defined.)
\item For each prime $l$, $\mathrm{WD}(\rho_\phi|_{\Gamma_{\bbQ_l}})$ is $\widehat{G}$-conjugate to $\phi|_{L_{\bbQ_l}}$.\footnote{Here we write $\mathrm{WD}$ for the Weil--Deligne representation associated to a $p$-adic representation of $\Gamma_{\bbQ_l}$, assumed to be de Rham if $l = p$. See for example \cite{Tat79} in the case $l \neq p$, or \cite[\S 2.2]{Bre02} in the case $l = p$. We will soon restrict to unramified places in order to avoid any unnecessary complications.}
\item There exists a Hodge--Tate cocharacter $\mu$ of $\rho_\phi$ and a complex conjugation $c \in \Gamma_\bbQ$ such that $\Ad(\rho_\phi(c)) \circ \mu = \mu^{-1} w$ and the morphism $a_\rho : \underline{W} \to {}^L G$, $(z_1, z_2) \mapsto z_1^\mu z_2^{w - \mu}$, $c \mapsto \rho_\phi(c)$ is $\widehat{G}$-conjugate to $a_{\phi|_{L_{\bbR}}}$.
\end{enumerate}
(Here, as in \S \ref{sec_galois_representations}, we write $w : \bbG_m \to Z({}^L G)$ for the weight cocharacter of the Galois representation $\rho_\phi$.) This leads us to the following conjecture:
\begin{conjecture}\label{conj_galois_reps_for_tempered_forms}
Let $\pi$ be an essentially tempered automorphic representation of $G(\bbA_\bbQ)$ which is $L$-algebraic. Then there exists a continuous homomorphism $\rho_\pi : \Gamma_\bbQ \to {}^L G$ satisfying the following conditions:
\begin{enumerate}
\item $\rho_\pi$ is geometric and pure. 
\item For each prime $l \neq p$ such that $\pi_l$ is unramified, $\rho_\pi|_{\Gamma_{\bbQ_l}}$ is unramified and $\rho_\pi(\Frob_l)$ is $\widehat{G}$-conjugate to the Satake parameter of $ \pi_l$.
\item There exists a Hodge--Tate cocharacter $\mu$ of $\rho_\pi$ and a complex conjugation $c \in \Gamma_\bbQ$ such that $\Ad \rho_\pi(c) \circ \mu = \mu^{-1} w$ and the morphism $a_\rho : \underline{W} \to {}^L G$, $(z_1, z_2) \mapsto z_1^\mu z_2^{w - \mu}$, $c \mapsto \rho_\pi(c)$ is $\widehat{G}$-conjugate to $a_{\pi_\infty}$, the algebraic $L$-parameter of $\pi_\infty$.
\end{enumerate}
\end{conjecture}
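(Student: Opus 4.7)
My plan is to attempt the conjecture by transferring $\pi$ to a setting where Galois representations are already known to exist, namely to automorphic representations on groups that either support a Shimura datum or are covered by the endoscopic classification. Concretely, I would first treat the case where $(G, X)$ is a Shimura datum and $\pi_\infty$ is essentially discrete series: Kottwitz's stabilization of the trace formula on $\Sh_K(G, X)$ then provides, for a choice of algebraic local system $\cF_\tau$ matched to the infinitesimal character of $\pi_\infty$, a $\pi^\infty$-isotypic subquotient of $H^\ast_{\text{\'et}}(\Sh_K(G, X)_{\overline{\bbQ}}, \cF_{\tau, p})$ on which $\Gamma_\bbQ$ acts through a candidate $\rho_\pi$. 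For groups not appearing in a Shimura datum, or for $\pi$ whose archimedean component is essentially tempered but not discrete, I would invoke Arthur's endoscopic classification (available for classical groups) to transfer $\pi$ to a suitable isobaric automorphic representation of some $\GL_n(\bbA_F)$ with $F$ totally real or CM, apply \cite{Har16} and \cite{Sch16} to obtain a $\GL_n$-valued Galois representation, and then repackage this as an ${}^L G$-valued representation by reconstructing its composition with each rational representation of $\widehat{G}$ separately.

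Granting such a construction, the three numbered conditions would be checked as follows. Satake compatibility (part (2)) is a direct consequence of Eichler--Shimura-type congruence relations for the Hecke action on \'etale cohomology: the eigenvalues of $\pi_l^\infty$ determine $\rho_\pi(\Frob_l)$ up to $\widehat{G}$-conjugacy at unramified $l \neq p$, which is exactly what pseudo-stabilization delivers. Purity (part (1)) follows from the Weil conjectures applied to a smooth proper compactification, or, in the non-proper case, by passing to intersection cohomology of the minimal compactification and invoking the purity results used in \S \ref{open}; the weight cocharacter $w$ then agrees with the one determined by the central character of $\pi$ and the highest weights of $\cF_\tau$. Part (3) is more delicate: one would identify $\mu$, up to $\widehat{G}$-conjugacy, with the $\widehat{G}$-valued cocharacter derived from the Hodge cocharacter of $(G, X)$, and then use the motivic discussion at the end of \S \ref{sec_galois_representations} --- specifically the Hodge--Betti comparison and the involution induced by complex conjugation on the Betti realization --- to force the conjugacy class of $\rho_\pi(c)$ and thereby match $a_{\rho_\pi}$ with $a_{\pi_\infty}$.

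The principal obstacle, and the reason this remains a conjecture, is twofold. First, functorial transfer from a general reductive $G$ to $\GL_n$ or to a group admitting a Shimura variety is not known outside of the classical groups covered by Arthur and Mok, and even there the descent back to $G$ requires discrete-series hypotheses at infinity that rule out many essentially tempered $\pi$. Second, even when $\rho_\pi$ can be constructed, local-global compatibility at infinity in the precise form of part (3) --- which involves not only the Hodge--Tate weights but also the $\widehat{G}$-conjugacy class of the involution $\Ad \rho_\pi(c)$ on $\widehat{G}$ --- is notoriously hard to pin down. Establishing the correct conjugacy class of complex conjugation is already, in the self-dual case, the content of the oddness results of Taylor, Bella\"iche--Chenevier, and Ta\"ibi \cite{Tai16}; for general $G$ it would require either honest access to the motivic framework invoked in \S \ref{sec_galois_representations} or a new $p$-adic deformation technique along eigenvarieties that preserves archimedean conjugacy classes.
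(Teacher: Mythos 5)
The statement you are trying to prove is Conjecture \ref{conj_galois_reps_for_tempered_forms}: the paper states it as a conjecture precisely because no proof is known, and it offers no proof. What the paper gives instead is a heuristic derivation: assuming the existence of Arthur's global Langlands group $L_\bbQ$, an essentially tempered $L$-algebraic $\pi$ should have an $L$-parameter $\phi : L_\bbQ \to {}^L G$ with essentially bounded image; such a $\phi$ should factor through the motivic Galois group; and the ``paradis motivique'' discussion at the end of \S \ref{sec_galois_representations} (standard conjectures, Tate conjecture, Fontaine--Mazur) then predicts purity and the existence of a Hodge--Tate cocharacter $\mu$ with $\Ad(\rho(c)) \circ \mu = \mu^{-1} w$. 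The conjecture is then extracted as a statement independent of $L_\bbQ$. Your proposal attempts an actual construction, which is a genuinely different (and more ambitious) route; as you yourself conclude, it does not succeed, so what you have written is a survey of partial results rather than a proof.

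Beyond the obstacles you already name, there are concrete gaps in each step of your sketch. First, realizing $\rho_\pi$ in $H^\ast_{\text{\'et}}(\Sh_K(G,X)_{\overline{\bbQ}}, \cF_{\tau,p})$ requires not only that $G$ underlie a Shimura datum with $\pi_\infty$ essentially discrete series, but also control of the precise Galois module structure of the $\pi^\infty$-isotypic part; outside the ``simple'' cases of \cite{Kot92} this is Kottwitz's conjecture, i.e.\ essentially Conjecture \ref{conj_consequence_of_Arthur_Kottwitz}, itself unproven. Second, the representations produced by \cite{Har16} and \cite{Sch16} for non-self-dual $\pi$ on $\GL_n$ over CM fields are not known to be de Rham at $p$, and their purity is open, so part (1) does not follow from that route; your appeal to the Weil conjectures presupposes that $\rho_\pi$ lives in the cohomology of a proper variety, which is exactly what the rest of this paper argues should fail in the non-polarizable case. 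Third, ``reconstructing'' an ${}^L G$-valued $\rho_\pi$ from its compositions with the rational representations of $\widehat{G}$ is a nontrivial Tannakian/pseudocharacter gluing problem, and even when it can be solved it determines $\rho_\pi$ only up to ambiguities that matter for parts (2) and (3). Finally, part (3) demands the simultaneous $\widehat{G}$-conjugacy of the pair $(\mu, \rho_\pi(c))$ to $a_{\pi_\infty}$, which is strictly stronger than knowing the conjugacy class of $\rho_\pi(c)$ alone (the oddness results of \cite{Tai16} and their relatives); your fallback to the motivic discussion of \S \ref{sec_galois_representations} is circular, since that discussion is itself conditional on the same web of conjectures that make this statement a conjecture in the first place.
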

We note that this conjecture makes no reference to the Langlands group. It is worth comparing this conjecture with those made in \cite[\S 3.2]{Buz14}. In \emph{loc. cit.}, the authors do not restrict to essentially tempered automorphic representations, imposing instead only $L$-algebraicity. At infinity, they predict only the $\widehat{G}$-conjugacy class of $\rho_\pi(c)$ in ${}^L G$. By contrast, we are predicting both the conjugacy class of $\rho_\pi(c)$ and the existence of a Hodge--Tate cocharacter that is compatible with $\rho_\pi(c)$, in some sense. This is motivated by the discussion at the end of \S \ref{sec_galois_representations}. We note that this stronger prediction would be false without the restriction that $\pi$ is essentially tempered, as one sees either by considering holomorphic Eisenstein series for $\GL_2$ or holomorphic Saito--Kurakawa lifts on $\mathrm{PSp}_4$ (cf. \cite[\S 3]{Lan79}). 
\begin{proposition}\label{prop_oddness_tempered_case}
Let $\pi$ be an essentially tempered $L$-algebraic automorphic representation of $G(\bbA_\bbQ)$, and suppose that $\pi_\infty$ is essentially square-integrable. Suppose that Conjecture \ref{conj_galois_reps_for_tempered_forms} holds for $\pi$. Then:
\begin{enumerate}
\item $\Ad \rho_\pi(c)$ is an odd involution of $\widehat{G}$, in the sense of Definition \ref{def_odd_involution}.
\item Let $H_\pi$ denote the Zariski closure of the image of $\rho_\pi$. Then $\Ad \rho_\pi(c)$ is an odd involution of $H_\pi^\circ$.
\end{enumerate}
\end{proposition}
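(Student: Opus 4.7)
The plan is to establish (1) first by exploiting condition (3) of Conjecture \ref{conj_galois_reps_for_tempered_forms} to reduce to a classical statement about archimedean $L$-parameters of discrete series, and then to deduce (2) by a direct application of Lemma \ref{lem_oddness_in_reductive_subgroups}.

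For (1), I would unwind the embedding $L_\bbR \hookrightarrow \underline{W}(\bbC)$, which sends $j \mapsto (-i,-i)\rtimes c$, to obtain
\[ \phi_\infty(j) \;=\; a_{\phi_\infty}\bigl((-i,-i)\rtimes c\bigr) \;=\; (-i)^\mu (-i)^{w-\mu} \cdot a_{\phi_\infty}(c) \;=\; w(-i) \cdot a_{\phi_\infty}(c). \]
Since $w$ is central, $\Ad \phi_\infty(j) = \Ad a_{\phi_\infty}(c)$; and by item (3) of Conjecture \ref{conj_galois_reps_for_tempered_forms}, $a_\rho$ and $a_{\phi_\infty}$ are $\widehat{G}$-conjugate, so $\Ad \rho_\pi(c) = \Ad a_\rho(c)$ is $\widehat{G}$-conjugate to $\Ad \phi_\infty(j)$. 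It is therefore enough to show that $\Ad \phi_\infty(j)$ is odd, and this is where the essentially square-integrable hypothesis enters: for a discrete series $L$-parameter, $\phi_\infty(\bbC^\times)$ lies in a maximal torus $\widehat{T} \subset \widehat{G}$ with $\mu$ regular, and $\phi_\infty(j) \in N_{{}^L G}(\widehat{T})$ acts on $\widehat{T}$ as ``minus the Galois action''. Equivalently, $\Ad \phi_\infty(j)$ is conjugate to a Chevalley involution modulo the centre, and in particular odd in the sense of Definition \ref{def_odd_involution} (cf.\ \cite{Cal}).

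For (2), I would apply Lemma \ref{lem_oddness_in_reductive_subgroups} with ambient group $\widehat{G}$, closed reductive subgroup $H_\pi$, involution $\theta = \Ad \rho_\pi(c)$, and the cocharacters $\mu$ (Hodge--Tate) and $w$ (weight). The verifications are short: $H_\pi$ is reductive because $\rho_\pi$ is semisimple (cf.\ Proposition \ref{prop_zariski_closure}); $\theta$ preserves $H_\pi$ because $\rho_\pi(c) \in H_\pi$; $\theta|_{\widehat{G}}$ is odd by (1); $\mu$ lies in $H_\pi$ by definition of Hodge--Tate cocharacter and is regular in $\widehat{G}$ by the discussion above; $w$ is central in $\widehat{G}$ by purity and also lies in $H_\pi$ since the weight grading on any representation of $\widehat{G}$ pulled back via $\rho_\pi$ is already induced from a cocharacter of $H_\pi$; finally $\theta\circ\mu = \mu^{-1}w$ is item (3) of Conjecture \ref{conj_galois_reps_for_tempered_forms}. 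The lemma then yields that $\theta|_{H_\pi^\circ}$ is an odd involution, which is (2).

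The hard part will be the archimedean input used in (1): that the image $\phi_\infty(j)$ of the generator of $\Gal(\bbC/\bbR)$ under a discrete series $L$-parameter acts on $\widehat{G}$ as an odd involution. This is classical material from the Langlands--Shelstad real local theory, but extracting it cleanly requires carefully matching the conventions governing the $\Gal$-action on ${}^L G$, the embedding $L_\bbR \hookrightarrow \underline{W}(\bbC)$, and the normalization of discrete series parameters. Once this is in place, everything else in both parts is a formal consequence of Conjecture \ref{conj_galois_reps_for_tempered_forms} and Lemma \ref{lem_oddness_in_reductive_subgroups}.
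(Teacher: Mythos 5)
Your proposal is correct and follows essentially the same route as the paper: part (1) is reduced via item (3) of Conjecture \ref{conj_galois_reps_for_tempered_forms} to the explicit shape of discrete series $L$-parameters (regular $\mu$, with $a_{\pi_\infty}(c)$ acting as $-1$ on the adjoint torus $\widehat{T}^{\text{ad}}$), and part (2) follows by applying Lemma \ref{lem_oddness_in_reductive_subgroups} to $H_\pi$ inside ${}^L G$ with the Hodge--Tate and weight cocharacters. The only difference is that you defer the archimedean computation to ``classical material,'' whereas the paper carries it out explicitly by writing $a_{\pi_\infty}(c) = n_G z \rtimes c$ and checking that $\Ad a_{\pi_\infty}(c)$ acts as $\sigma_T$ on $\widehat{T}$, hence as $-1$ on $\widehat{T}^{\text{ad}}$.
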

\begin{proof}
We note that our definition of Hodge--Tate cocharacter implies that $\mu$ in fact factors through $H_\pi$. Therefore the second part of the proposition will follow from the first part and from Lemma \ref{lem_oddness_in_reductive_subgroups} if we can establish the first part and at the same time show that $\mu$ is a regular cocharacter of $\widehat{G}$.

To prove the whole proposition, it therefore suffices to show that $a_{\pi_\infty}$ has the property that $a_{\pi_\infty}|_{\bbG_m \times 1}$ is a regular cocharacter and $a_{\pi_\infty}(c)$ acts as $-1$ on $\Cent(\widehat{G}, a_{\pi_\infty}(\bbG_m \times 1)) / Z(\widehat{G})$. To see this, we just describe the $L$-parameters of the $L$-algebraic discrete series representations of $G(\bbR)$. Fix a choice of maximal torus $T \subset G_\bbR$ which is compact mod centre. Let $B \subset G_\bbC$ be a Borel subgroup containing $T_\bbC$. Let $\widehat{T} \subset \widehat{B} \subset \widehat{G}$ be the corresponding maximal torus and Borel subgroups of the dual group. Let $\sigma_T$ denote the $L$-action of complex conjugation on $\widehat{T}$ corresponding to the given real structure on $T$. Then the restriction of the $L$-action of $\widehat{G}$ to $\widehat{T}$ is given by $\Ad n_G \circ \sigma_T$, where $n_G \in N(\widehat{G}, \widehat{T})$ represents the longest element of $W(\widehat{G}, \widehat{T})$ with respect to the system of positive roots given by $\widehat{B}$.

If $\pi_\infty$ is an $L$-algebraic essentially discrete series representation of $G(\bbR)$, then, after possibly replacing $a_{\pi_\infty}$ by a $\widehat{G}$-conjugate, $a_{\pi_\infty}$ is given by the formula
\[ (z_1, z_2) \mapsto z_1^\mu z_2^{w - \mu}, \]
\[ c \mapsto n_G z \rtimes c \]
for some element $z$ of the centre $Z(\widehat{G})$ of $\widehat{G}$, and some cocharacters $\mu \in X_\ast(\widehat{T})$, $w \in X_\ast(Z(\widehat{G}))$, such that $\mu$ is regular and dominant with respect to $\widehat{B}$. In particular, $\Ad a_{\pi_\infty}(c)$ acts as $\sigma_T$ on $\widehat{T}$. If $\widehat{T}^\text{ad}$ denotes the image of $\widehat{T}$ in $\widehat{G}^\text{ad}$, then $\sigma_T$ acts as $-1$ on $\widehat{T}^\text{ad}$. It follows that $\Ad a_{\pi_\infty}(c)$ is an odd involution of $\widehat{G}$, as desired.
\end{proof}
We note that a calculation of the type appearing in Proposition \ref{prop_oddness_tempered_case} has appeared already in the note of Gross \cite{Gro}. We now turn to the question of generalizing this proposition to non-tempered automorphic representations. We will do this just for discrete automorphic representations, using Arthur's formalism of $A$-parameters. By definition, a global $A$-parameter is a continuous semisimple homomorphism
\[ \psi : L_\bbQ \times \SL_2 \to {}^L G \]
such that the induced map $L_\bbQ \to \pi_0({}^L G)$ factors through the canonical one $\Gamma_\bbQ \to \pi_0({}^L G)$, and
with the property that $\psi|_{L_{\bbQ}}$ is essentially bounded. To any $A$-parameter $\psi$ we can associate an $L$-parameter $\phi_\psi$, given by the formula
\[ \phi_\psi(w) = \psi( w, \diag( \| w \|^{1/2}, \| w \|^{-1/2} ) ), \]
where $\| \cdot \| : W_\bbQ \to \bbR_{>0}$ is the norm pulled back from the idele class group. We define a local $A$-parameter similarly to be a continuous semisimple homomorphism
\[ \psi_v : L_{\bbQ_v} \times \SL_2 \to {}^L G \]
such that the induced map $L_{\bbQ_v} \to \pi_0({}^L G)$ factors through the canonical one $\Gamma_{\bbQ_v} \to \pi_0({}^L G)$, and with the property that $\psi_v|_{L_{\bbQ_v}}$ is essentially bounded. Arthur's conjectures predict that for any local $A$-parameter $\psi_v$, one should be able to define a set (called an $A$-packet) $\Pi_{\psi_v}$ of representations of $G(\bbQ_v)$, containing the $L$-packet of $\phi_{\psi_v}$. To any discrete automorphic representation $\pi$ of $G(\bbA_\bbQ)$, one should be able to associate a global $A$-parameter $\psi : L_\bbQ \times \SL_2 \to {}^L G$ with the property that for each place $v$ of $\bbQ$, $\pi_v \in \Pi_{\psi_v}$. 

We now discuss what this has to do with Galois representations. Let $\psi$ be a global $A$-parameter such that $\phi_{\psi}$ is $L$-algebraic. Let $(x, t, y)$ be the $\frs\frl_2$-triple in $\widehat{\frg}$ determined by $\psi|_{\SL_2}$, and let $M_1$ denote the centralizer in $\widehat{G}$ of this $\frs\frl_2$-triple. Let $M_1'$ denote the centralizer in ${}^L G$ of this $\frs\frl_2$-triple.  Let $M = M_1 \cdot \lambda_t(\bbG_m)$, where $\lambda_t : \bbG_m \to \SL_2 \to \widehat{G}$ is the cocharacter with derivative $t$, and let $M' = M_1 \cdot \lambda_t(\bbG_m)$. Then there are exact sequences
\[ \xymatrix@1{ 1 \ar[r] & M_1 \ar[r] & M_1' \ar[r] & \pi_0({}^L G) \ar[r] & 1 } \]
and
\[ \xymatrix@1{ 1 \ar[r] & M \ar[r] & M' \ar[r] & \pi_0({}^L G) \ar[r] & 1, } \]
and $\phi_\psi$ factors through a homomorphism $\phi' : L_\bbQ \to M'$. This leads us to expect the existence of a representation $\rho_\psi : \Gamma_\bbQ \to {}^L G$ and an $\frs\frl_2$-triple $(x, t, y)$ in $\widehat{\frg}$ satisfying the following conditions: 
\begin{enumerate}
\item $\rho_\psi$ is geometric and mixed. Moreover, $dw - t \in \frz(\widehat{\frg})$ and the following formulae hold: for each $\gamma \in \Gamma_\bbQ$,
\[ \Ad \rho_\psi(\gamma) (x) = \epsilon^{-1}(\gamma)x, \Ad \rho_\psi(\gamma)(t) = t, \Ad \rho_\psi(\gamma)(y) = \epsilon(\gamma)y. \]
\item For almost all primes $l$, $\mathrm{WD}(\rho_\psi|_{\Gamma_{\bbQ_l}})$ is $\widehat{G}$-conjugate to $\phi_\psi|_{L_{\bbQ_l}}$.
\item There exists a Hodge--Tate cocharacter $\mu$ of $\rho_\psi$ and a complex conjugation $c \in \Gamma_\bbQ$ with the following properties: we have $\Ad \rho_\psi(c) \circ \mu = \mu^{-1} w$. Define $a_\rho : \underline{W} \to {}^L G$, $(z_1, z_2) \mapsto z_1^\mu z_2^{w - \mu}$, $c \mapsto \rho_\psi(c)$. Then the pair $(a_\rho, (x, t, y))$ is $\widehat{G}$-conjugate to $(a_{\phi_\psi|_{L_\bbR}}, (d\psi(x_0), d\psi(t_0), d\psi(y_0)))$.
\end{enumerate}
In contrast to the case of tempered representations, we do not see a way to phrase this prediction solely in terms of automorphic representations, without making reference to $A$-parameters. However, one can make a conjecture supposing only that the local $A$-packets at infinity have been defined. A definition has been given in \cite{Ada92}. 

In order to avoid unnecessary complications here, we will now assume for the rest of \S \ref{sec_conjectures_on_galois_representations} that $G_\bbR$ contains a maximal torus $T$ which is compact modulo centre. Let $t_0 \in T(\bbR)$ be such that $\Ad(t_0)$ induces a Cartan involution of $G_\bbR^\text{ad}$. Let $K_\infty = G(\bbR)^{t_0}$. We will state a conjecture only for automorphic representations which are (up to twist) cohomological, in the sense that there exists an irreducible algebraic representation $\tau$ of $G_\bbC$ such that $H^\ast(\frg_\bbC, K_\infty; \pi_\infty \otimes \tau) \neq 0$. In this case we can use the $A$-parameters and packets described by Adams--Johnson (see \cite{Ada87} and also \cite[\S 5]{Art89}). The representations in these packets were later shown by Vogan and Zuckerman to be the unitary cohomological representations of $G(\bbR)$ \cite{Vog84, Vog84a}.
\begin{conjecture}\label{conj_galois_reps_for_non_tempered_representations}
Let $\pi$ be a discrete $L$-algebraic automorphic representation of $G(\bbA_\bbQ)$ such that a twist of $\pi_\infty$ is cohomological. Then there exists an $\frs\frl_2$-triple $(x, t, y)$ in $\widehat{\frg}$ and a continuous representation $\rho_\pi : \Gamma_\bbQ \to {}^L G$ satisfying the following conditions:
\begin{enumerate}
\item $\rho_\pi$ is geometric and mixed. Moreover, $dw - t \in \frz(\widehat{\frg})$ and the following formulae hold: for each $\gamma \in \Gamma_\bbQ$,
\[ \Ad \rho_\pi(\gamma) (x) = \epsilon^{-1}(\gamma)x, \Ad \rho_\pi(\gamma)(t) = t, \Ad \rho_\pi(\gamma)(y) = \epsilon(\gamma)y. \]
\item For almost all primes $l \neq p$ such that $\pi_l$ is unramified, $\rho_\pi|_{\Gamma_{\bbQ_l}}$ is unramified and $\rho_\pi(\Frob_l)$ is $\widehat{G}$-conjugate to the Satake parameter of $ \pi_l$.
\item There exists a Hodge--Tate cocharacter $\mu$ of $\rho_\pi$, a complex conjugation $c \in \Gamma_\bbQ$, and an $A$-parameter $\psi : L_\bbR \times \SL_2 \to {}^L G$ which is up to twist of the type described in \cite[\S 5]{Art89}, with the following property: we have $\Ad \rho_\pi(c) \circ \mu = \mu^{-1} w$, and $\phi_\psi$ is $L$-algebraic. Define $a_\rho : \underline{W} \to {}^L G$, $(z_1, z_2) \mapsto z_1^\mu z_2^{w - \mu}$, $c \mapsto \rho_\pi(c)$. Then the pair $(a_\rho, (x, t, y))$ is $\widehat{G}$-conjugate to $(a_{\phi_\psi}, d\psi(x_0), d\psi(t_0), d\psi(y_0))$.
\end{enumerate}
\end{conjecture}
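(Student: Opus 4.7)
The conjecture is a prediction rather than a theorem I could prove unconditionally, so my plan is to show that it follows from the conjectural formalism set up earlier in this section, together with the classification of unitary cohomological representations of real reductive groups. The starting point is that, granting the existence of the global Langlands group $L_\bbQ$ and Arthur's conjectures, the discrete automorphic representation $\pi$ should be attached to a global $A$-parameter
\[ \psi : L_\bbQ \times \SL_2 \to {}^L G, \]
with $\pi_v \in \Pi_{\psi_v}$ for every place $v$. The associated $L$-parameter $\phi_\psi$ would then be $L$-algebraic, because $\pi$ is assumed $L$-algebraic and $\phi_{\psi_\infty}$ determines the infinitesimal character of $\pi_\infty$.

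The next step is to pin down $\psi|_{L_\bbR \times \SL_2}$. Because a twist of $\pi_\infty$ is cohomological and we have arranged for $G_\bbR$ to contain a maximal torus which is compact modulo the centre, the results of Vogan--Zuckerman \cite{Vog84, Vog84a} identify $\pi_\infty$ up to twist as a unitary cohomological representation, and the Adams--Johnson classification \cite{Ada87} then forces $\psi|_{L_\bbR \times \SL_2}$ to be of the form described in \cite[\S 5]{Art89}. This in particular pins down the $\frs\frl_2$-triple $(d\psi(x_0), d\psi(t_0), d\psi(y_0))$ in $\widehat{\frg}$, as well as the shape of $a_{\phi_\psi}$ via the explicit Langlands parametrisation of discrete series recalled in the proof of Proposition \ref{prop_oddness_tempered_case}.

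The third step is to pass from $\phi_\psi$ to a $p$-adic Galois representation. Following Langlands' suggestion that $L_\bbQ$ should map to the motivic Galois group of $\bbQ$, together with the prediction (articulated between Conjecture \ref{conj_galois_reps_for_tempered_forms} and the present conjecture) that an $L$-parameter $L_\bbQ \to {}^L G$ factors through the motivic Galois group if and only if it is $L$-algebraic, one obtains a motivic ${}^L G$-valued representation whose $p$-adic realisation via $\iota$ provides $\rho_\pi$. Unramified local--global compatibility and the fact that $\rho_\pi$ is geometric and mixed follow from the motivic origin. The compatibility at infinity, including the relation $\Ad \rho_\pi(c)\circ \mu = \mu^{-1} w$ and the $\widehat{G}$-conjugacy of $(a_\rho,(x,t,y))$ to $(a_{\phi_\psi},(d\psi(x_0),d\psi(t_0),d\psi(y_0)))$, follows from the Hodge--Betti analysis of complex conjugation reviewed in \S \ref{sec_galois_representations}, combined with the explicit Adams--Johnson shape of $\psi|_{L_\bbR \times \SL_2}$ and the behaviour of the $\SL_2$-factor under the Tate twist $\epsilon$ (which gives the formulae $\Ad\rho_\pi(\gamma)(x)=\epsilon^{-1}(\gamma)x$, etc., once one remembers how the norm character enters the definition of $\phi_\psi$).

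The main obstacle is that every step above rests on structures -- the global Langlands group, Arthur's global classification for a general reductive $G$, and the Tate and standard conjectures implicit in the ``paradis motivique'' -- which are not available in any generality. In favourable cases, where $\pi$ transfers to a classical group for which Arthur's classification is known and where the target group admits a Shimura variety whose cohomology is expected to realise the associated motive, one can hope to prove Conjecture \ref{conj_galois_reps_for_non_tempered_representations} unconditionally; but even there, the delicate part of the conjecture -- the joint $\widehat{G}$-conjugacy of the Hodge--Tate cocharacter, the involution $\Ad\rho_\pi(c)$, and the $\frs\frl_2$-triple -- will be harder to verify than the unramified compatibility, and it is precisely this piece of information that is needed for the applications to Theorem \ref{thm_intro_non_automorphy} via Principle \ref{principle_oddness}.
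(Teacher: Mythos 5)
This statement is a conjecture, and the paper offers no proof of it — only the heuristic justification in the preceding paragraphs of \S \ref{sec_conjectures_on_galois_representations} (the global Langlands group, Arthur's $A$-parameter formalism, the Adams--Johnson description of cohomological packets, and the motivic discussion at the end of \S \ref{sec_galois_representations}). Your proposal correctly recognises this and reproduces essentially the same chain of motivation, including the same caveats about which ingredients are unavailable, so it matches the paper's own treatment.
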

This leads us to the following generalization of Proposition \ref{prop_oddness_tempered_case}:
\begin{proposition}\label{prop_oddness_nontempered_case}
Let $\pi$ be a discrete $L$-algebraic automorphic representation of $G(\bbA_\bbQ)$ such that some twist of $\pi_\infty$ is cohomological. Suppose that $\pi$ satisfies Conjecture \ref{conj_galois_reps_for_non_tempered_representations}. Let $H_\pi \subset {}^L G$ denote the Zariski closure of the image of $\rho_\pi$. Then $\Ad \rho_\pi(c)$ is an odd involution of $H_\pi^\circ$.
\end{proposition}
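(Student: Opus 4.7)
The plan is to follow the strategy of Proposition \ref{prop_oddness_tempered_case} and apply Lemma \ref{lem_oddness_in_reductive_subgroups}, but with a smaller ambient reductive group than $\widehat{G}$. I take $G = Z_{{}^L G}(t)$, $H = H_\pi$, $\theta = \Ad \rho_\pi(c)$, and the cocharacters $\mu$ (Hodge--Tate) and $w$ (weight) furnished by Conjecture \ref{conj_galois_reps_for_non_tempered_representations}. The reason to shrink the ambient group is that, when $\pi_\infty$ is non-tempered, the Hodge--Tate cocharacter $\mu$ typically fails to be regular in $\widehat{G}$, but it remains regular in the Levi $Z_{\widehat{G}}(t)^\circ$.

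The formal hypotheses of Lemma \ref{lem_oddness_in_reductive_subgroups} are straightforward: condition (1) of the conjecture gives $H_\pi \subset G$, while $c^2 = 1$ makes $\theta$ an involution of $G$; the relation $\theta \circ \mu = \mu^{-1} w$ is part of condition (3); and $\mu$ factors through $H_\pi^\circ \subset G^\circ$ by definition of the Hodge--Tate cocharacter. For centrality of $w$ in $G$, the condition $dw - t \in \frz(\widehat{\frg})$ together with $t \in \frz(Z_{\widehat{G}}(t)^\circ)$ places $w$ in $Z(G^\circ)$, and Galois-equivariance of the weight filtration (using that $\Ad \rho_\pi(\gamma)$ fixes $t$ for all $\gamma$) promotes this to $w \in Z(G)$.

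The crux is to verify that $\theta$ is odd on $G^\circ$ and that $\mu$ is regular in $G^\circ$; both rest on the Adams--Johnson structure of condition (3). Let $\widehat{L} \subset \widehat{G}$ be the Levi through which $\psi|_{W_\bbR}$ factors; it is dual to a real Levi $L \subset G_\bbR$ with compact Cartan, and $\psi|_{W_\bbR}$ is a discrete series L-parameter of $L$. A standard feature of Adams--Johnson parameters is that $t = d\psi(t_0)$ lies in the Lie algebra of the central torus $Z(\widehat{L})$ and is regular there (the pairing of $t$ with any root of $\widehat{G}$ outside $\widehat{L}$ is nonzero), whence $Z_{\widehat{G}}(t) = Z_{\widehat{G}}(Z(\widehat{L})) = \widehat{L}$, so $G^\circ = \widehat{L}$. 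Up to $\widehat{G}$-conjugation, condition (3) lets us identify $\rho_\pi(c)$ with $a_{\phi_\psi}(c) \in {}^L L$, and the oddness of $\Ad a_{\phi_\psi}(c)$ on $\widehat{L}$ is exactly the content of Proposition \ref{prop_oddness_tempered_case} applied to $L$. For regularity of $\mu$ in $\widehat{L}$: the cocharacter $\mu$ differs from the infinitesimal character $\mu_L$ of the discrete series by a cocharacter into $Z(\widehat{L})$ coming from the $\SL_2$-part of $\psi$, so $\mu$ is regular in $\widehat{L}$ if and only if $\mu_L$ is, which holds by discrete series regularity. With all hypotheses satisfied, Lemma \ref{lem_oddness_in_reductive_subgroups} delivers the proposition.

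The main obstacle I anticipate is cleanly extracting the structural identity $Z_{\widehat{G}}(t) = \widehat{L}$ from the definition of Adams--Johnson parameters: concretely, one must verify that the cocharacter with derivative $t$ is regular in the central torus of the Levi $\widehat{L}$ where $\psi|_{W_\bbR}$ factors. This is built into the Adams--Johnson construction of \cite{Ada87} but needs to be unpacked from the definitions. Once this identification is in hand, everything else is formal manipulation with Lemma \ref{lem_oddness_in_reductive_subgroups} and the tempered case.
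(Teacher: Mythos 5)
Your overall strategy --- shrink the ambient group so that the Hodge--Tate cocharacter becomes regular, then invoke Lemma \ref{lem_oddness_in_reductive_subgroups} --- is the same as the paper's, but your identification of the ambient group rests on a false description of Adams--Johnson parameters, and the argument breaks there. For these parameters $(x,t,y)$ is a \emph{principal} $\frs\frl_2$-triple in $\widehat{\frl}$, so $t$ is a regular (not central) element of $\widehat{\frl}$; consequently $Z_{\widehat{G}}(t)$ is neither $\widehat{L}$ nor the group one should work with. The correct ambient group is $M' = M_1' \cdot \lambda_t(\bbG_m)$ with $M_1 = \Cent(\widehat{G},\psi(\SL_2))$ the centralizer of the \emph{whole} triple; condition (1) of Conjecture \ref{conj_galois_reps_for_non_tempered_representations} (Galois scales $x$ and $y$ by $\epsilon^{\mp 1}$ and fixes $t$) is exactly what places $H_\pi$ inside $M'$. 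A maximal torus of $M_1$ is $A = Z(\widehat{L})^\circ$, not $\widehat{T}$, and the oddness computation must be done on $A$: one checks that $\Ad a_{\phi_\psi}(c)$ acts on $\widehat{T}$ as $\Ad(n_L^{-1})\circ\sigma_T$, i.e.\ as $-w_{0,L}$ modulo centre, which is $-1$ on $A$ but not on all of $\widehat{T}$ modulo centre. This is why one cannot simply quote the tempered case for a ``Levi''.

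Concretely, your ambient group $Z_{{}^L G}(t)$ fails: $\theta=\Ad\rho_\pi(c)$ need not be an odd involution of $Z_{\widehat{G}}(t)^\circ$. Take $\widehat{G}=\SL_4$ and $\widehat{L}=S(\GL_2\times\GL_2)$ with its principal $\frs\frl_2$-triple, so $t=\diag(1,-1,1,-1)$. Then $Z_{\widehat{G}}(t)^\circ$ is the \emph{other} $S(\GL_2\times\GL_2)$ (grouping indices $\{1,3\}$ and $\{2,4\}$), and $-w_{0,L}$ interchanges its two $\PGL_2$ factors, so the trace of $d\theta$ on the adjoint Lie algebra is $0$, not $-2$. (In the Saito--Kurokawa case for $\SO_5$ the groups $Z_{\widehat{G}}(t)$ and $M_1\cdot\lambda_t(\bbG_m)$ happen to coincide, which may be the source of the confusion; in general $Z_{\widehat{G}}(t)$ is strictly larger.) By contrast $M_1$ in this example is an $\SL_2$ on which $\theta$ is odd. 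To repair the proof you must replace $Z_{{}^L G}(t)$ by $M'$, verify that $\mu$ is regular in $M_1^\circ\cdot\lambda_t(\bbG_m)$ (which follows from the decomposition $\ffrm_1=\fra\oplus\ffrm_1^+\oplus\ffrm_1^-$ induced by $\widehat{\frg}=\widehat{\frl}\oplus\widehat{\frn}^+\oplus\widehat{\frn}^-$, since $A$ is a maximal torus of $M_1$), and compute the action of $a_{\phi_\psi}(c)$ on $A$ directly rather than deducing it from Proposition \ref{prop_oddness_tempered_case}.
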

This generalizes the second part of Proposition \ref{prop_oddness_tempered_case} because if $\sigma$ is an essentially square-integrable representation of $G(\bbR)$, then $G_\bbR$ contains a maximal torus which is compact mod centre and some twist of $\sigma$ is cohomological. One can also generalize the first part of Proposition \ref{prop_oddness_tempered_case}, although we don't prove this here as we don't need it. 
\begin{proof}
By Lemma \ref{lem_oddness_in_reductive_subgroups} and the final requirement of Conjecture \ref{conj_galois_reps_for_non_tempered_representations}, it will again suffice to show that if $\psi : L_\bbR \times \SL_2 \to {}^L G$ is a twist of one of the $A$-parameters considered in \cite[\S 5]{Art89} such that $\phi_\psi$ is $L$-algebraic, then the cocharacter $a_{\phi_\psi}|_{\bbG_m \times 1}$ is regular in the group $ M_1^\circ \cdot \lambda_t(\bbG_m)$, where $M_1 =  \text{Cent}(\widehat{G}, \SL_2)$, and $\Ad a_{\phi_\psi}(c)$ induces an odd involution of $M_1^\circ$. 

Using Arthur's explicit description in \cite[\S 5]{Art89}, we see that the pair $(a_{\phi_\psi}, (d \psi(x_0), d \psi(t_0), d \psi(y_0)))$ has the following form. First let $T \subset G_\bbR$ be a maximal torus which is compact mod centre, and let $B \subset G_\bbC$ be a Borel subgroup containing $T$. Let $\widehat{T} \subset \widehat{B} \subset \widehat{G}$ be the corresponding maximal torus and Borel subgroup of the dual group. Then there is a standard Levi subgroup $\widehat{L}$ such that $(x, t, y)$ is a principal $\frs\frl_2$-triple in $\widehat{\frl}$, and $a_{\phi_\psi}$ is given by the formula
\[ (z_1, z_2) \mapsto z_1^{\mu} z_2^{w - \mu}, \]
\[ c \mapsto w(-i) n_L^{-1} n_G z \rtimes c, \]
for some $z \in Z(\widehat{G})$ and some regular dominant cocharacter $\mu \in X_\ast(\widehat{T})$. (We note that this differs from \cite[p. 30]{Art89}, where $n_L^{-1} n_G$ is replaced by $n_G n_L^{-1}$, but our choice appears to give a correct formula. We recall that $n_G, n_L$ are elements of the derived groups of $\widehat{G}$ and $\widehat{L}$, respectively, which represent the longest elements of the respective Weyl groups with respect to the sets of positive roots determined by $\widehat{B}$.) There is a decomposition $\widehat{\frg} = \widehat{\frl} \oplus \widehat{\frn}^+ \oplus \widehat{\frn}^-$, where $\widehat{\frn}^+$ and $\widehat{\frn}^-$ are the Lie algebras of the unipotent radicals of, respectively, the standard parabolic subgroup containing $\widehat{L}$, and its opposite. This decomposition is $\widehat{\frl}$-invariant, hence $\frs\frl_2$-invarant. It follows that the Lie algebra of $\ffrm_1$ admits a similar decomposition 
\[ \ffrm_1 = \fra \oplus \ffrm_1^+  \oplus \ffrm_1^-, \]
where $\fra$ is the Lie algebra of the connected centre $A$ of $\widehat{L}$. In fact $A$ is a maximal torus of $M_1$: indeed, we have $\widehat{L} = \text{Cent}(\widehat{G}, A)$, so any torus of $M_1$ containing $A$ is necessarily contained in $\text{Cent}(\widehat{L}, \psi(\SL_2))$, hence in $A$. The proof will be complete if we can show that $\Ad a_{\phi_\psi(c)}$ acts as $-1$ on the image of $A$ in the adjoint group of $M_1^\circ$. However, since $\Ad a_{\phi_\psi(c)} = \Ad (w(-i) n_L^{-1} n_G) \circ \sigma_G$, we see that $\Ad a_{\phi_\psi(c)}$ acts on  $\widehat{T}$ as $\Ad(n_L^{-1}) \circ \sigma_T$. The action on $A$ is therefore equal to $\sigma_T|_A$, which has the desired property (recall that $\sigma_T$ acts as $-1$ on $\widehat{T} / Z(\widehat{G})$).
\end{proof}
We end this section by stating a variant of Conjecture \ref{conj_galois_reps_for_non_tempered_representations} for discrete automorphic representations $\pi$ such that $\pi_\infty$ is cohomological (not just up to twist). This variant is based on the observation that $\pi$ is then necessarily $C$-algebraic, in the sense of \cite{Buz14}. 

In \cite[Proposition 4.3.1]{Buz14}, Buzzard and Gee define a canonical extension of $G$ by $\bbG_m$:
\[ \xymatrix@1{ 1 \ar[r] & \bbG_m \ar[r] & \widetilde{G} \ar[r] & G \ar[r]& 1. } \]
They define the $C$-group ${}^C G$ to be the $L$-group ${}^C G = {}^L \widetilde{G}$. The virtue of the $C$-group is that if $\sigma$ is an irreducible admissible representation of $G(\bbR)$ which is cohomological, then its pullback $\sigma'$ to $\widetilde{G}(\bbR)$ has a canonical twist which is $L$-algebraic (see the proof of \cite[Proposition 5.3.6]{Buz14}). Moreover, the group ${}^C G$ admits an explicit description: its dual group can be identified as the quotient
\begin{equation}\label{eqn_dual_group_of_C_group} \widehat{\widetilde{G}} = (\bbG_m \times \widehat{G}) / (-1, \chi(-1)), 
\end{equation}
where $\chi \in X_\ast(\widehat{T})$ is the sum of the positive roots with respect to some choice of Borel containing $\widehat{T}$ (the element $\chi(-1)$ is central and does not depend on the choice of Borel). This leads us to the following conjecture for automorphic forms on $G$, which follows from applying Conjecture \ref{conj_galois_reps_for_non_tempered_representations} to suitable $L$-algebraic discrete automorphic representations of $\widetilde{G}$:
\begin{conjecture}\label{conj_representations_in_the_C_group}
Let $\pi$ be a discrete automorphic representation of $G(\bbA_\bbQ)$ such that $\pi_\infty$ is cohomological. Then there exists an $\frs\frl_2$-triple $(x, t, y)$ in $\widehat{\widetilde{\frg}}$ and a continuous representation $\rho'_\pi : \Gamma_\bbQ \to {}^C G$ satisfying the following conditions:
\begin{enumerate}
\item $\rho'_\pi$ is geometric and mixed. Moreover, $dw - t \in \frz(\widehat{\widetilde{\frg}})$ and the following formulae hold: for each $\gamma \in \Gamma_\bbQ$,
\[ \Ad \rho'_\pi(\gamma) (x) = \epsilon^{-1}(\gamma)x, \Ad \rho'_\pi(\gamma)(t) = t, \Ad \rho'_\pi(\gamma)(y) = \epsilon(\gamma)y. \]
\item For almost all primes $l \neq p$ such that $\pi_l$ is unramified, $\rho'_\pi|_{\Gamma_{\bbQ_l}}$ is unramified and $\rho'_\pi(\Frob_l)$ is $\widehat{\widetilde{G}}$-conjugate to the image of $(l^{-1/2}, t(\pi_l))$, where $t(\pi_l)$ is the Satake parameter of $\pi_l$.
\item There exists a Hodge--Tate cocharacter $\mu$ of $\rho'_\pi$, a complex conjugation $c \in \Gamma_\bbQ$, and an $A$-parameter $\psi : L_\bbR \times \SL_2 \to {}^C G$ which is up to twist of the type described in \cite[\S 5]{Art89}, with the following property: we have $\Ad \rho'_\psi(c) \circ \mu = \mu^{-1} w$. Define $a_\rho : \underline{W} \to {}^C G$, $(z_1, z_2) \mapsto z_1^\mu z_2^{w - \mu}$, $c \mapsto \rho'_\pi(c)$. Then the pair $(a_\rho, (x, t, y))$ is $\widehat{\widetilde{G}}$-conjugate to $(a_{\phi_\psi},( d\psi(x_0), d\psi(t_0), d\psi(y_0)))$.
\end{enumerate}
\end{conjecture}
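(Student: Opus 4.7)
The plan is to derive Conjecture \ref{conj_representations_in_the_C_group} from Conjecture \ref{conj_galois_reps_for_non_tempered_representations} applied to the group $\widetilde{G}$. First I would produce from $\pi$ a discrete $L$-algebraic automorphic representation $\tilde{\pi}$ of $\widetilde{G}(\bbA_\bbQ)$ with $\tilde{\pi}_\infty$ cohomological. Pulling $\pi$ back along $\widetilde{G}(\bbA_\bbQ) \to G(\bbA_\bbQ)$ gives a cohomological representation $\pi^{\ast}$ whose central $\bbG_m$ acts trivially; following the proof of \cite[Proposition 5.3.6]{Buz14}, one twists $\pi^{\ast}$ by a canonical character of $\bbG_m(\bbA_\bbQ)$ (a half-integer power of $\|\cdot\|$ dictated by the half-sum of positive roots) to obtain $\tilde{\pi}$, which is $L$-algebraic. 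Cohomologicality and discreteness are preserved, since we only tensor by a Hecke character of the central torus.

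The hypothesis that $\widetilde{G}_\bbR$ contains a maximal torus compact modulo centre is inherited from the corresponding hypothesis on $G_\bbR$: any such $T \subset G_\bbR$ lifts to its preimage in $\widetilde{G}_\bbR$. Thus Conjecture \ref{conj_galois_reps_for_non_tempered_representations} applies to $\tilde{\pi}$ and yields a continuous representation $\rho_{\tilde{\pi}} : \Gamma_\bbQ \to {}^L \widetilde{G} = {}^C G$ together with an $\frs\frl_2$-triple $(x,t,y) \in \widehat{\widetilde{\frg}}$, a Hodge--Tate cocharacter, a complex conjugation, and an $A$-parameter at infinity. I would then set $\rho'_\pi := \rho_{\tilde{\pi}}$ and check each of the three required conditions.

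Conditions (1) and (3) transport verbatim from the corresponding statements of Conjecture \ref{conj_galois_reps_for_non_tempered_representations} for $\tilde{\pi}$: the space $\widehat{\widetilde{\frg}}$ plays the role of $\widehat{\frg}$, and $\tilde{\pi}_\infty$ lies in an Adams--Johnson $A$-packet precisely because $\pi_\infty$ is cohomological and twisting by a $\bbG_m$-character preserves this property. The content of condition (2) is that under the identification (\ref{eqn_dual_group_of_C_group}), the unramified Satake parameter of $\tilde{\pi}_l$ corresponds to the image of $(l^{-1/2}, t(\pi_l))$: the first coordinate records the Frobenius value of the character used to convert $\pi^{\ast}$ into $\tilde{\pi}$, while the second coordinate records the Satake parameter of $\pi_l$ itself.

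The main obstacle, and the only place requiring real computation rather than citation, is condition (2). One must verify that the canonical Buzzard--Gee twist produces exactly the factor $l^{-1/2}$ in the first coordinate, and that this is consistent with the quotient relation $(-1,\chi(-1))$ used to form $\widehat{\widetilde{G}}$. Equivalently, one must reconcile the normalization of the Satake isomorphism that enters Conjecture \ref{conj_galois_reps_for_non_tempered_representations} (whose image lies in $\widehat{\widetilde{G}}$) with the Satake isomorphism for $G$ (whose image lies in $\widehat{G}$). This is a bookkeeping exercise in matching conventions at an unramified place, but it is the unique non-formal ingredient in the reduction.
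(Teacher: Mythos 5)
Your proposal matches the paper's own derivation: the paper introduces this conjecture with the words that it ``follows from applying Conjecture \ref{conj_galois_reps_for_non_tempered_representations} to suitable $L$-algebraic discrete automorphic representations of $\widetilde{G}$,'' which is precisely your reduction via the canonical Buzzard--Gee twist of the pullback of $\pi$ to $\widetilde{G}(\bbA_\bbQ)$. The paper leaves the bookkeeping (including the $l^{-1/2}$ normalization in condition (2)) implicit, so your more detailed account is consistent with, and a fleshing-out of, the intended argument.
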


\section{Intersection cohomology of Shimura varieties}\label{sec_negative_results}

In this section we will use conjectures of Arthur and Kottwitz to give a conjectural description of the Galois representations appearing in the intersection cohomology of Shimura varieties. Let $(G, X)$ be a Shimura datum. We write $E$ for the reflex field and $\Sh_K(G, X)$ for the associated Shimura variety over $E$ with respect to some neat open compact subgroup $K \subset G(\bbA_\bbQ^\infty)$. Let $\cF_\tau$ be an algebraic local system. We fix a prime $p$ and an isomorphism $\iota : \overline{\bbQ}_p \to \bbC$.

 Let $j :\Sh_K(G, X) \to \Sh^\text{min}_K(G, X)$ denote the open embedding of $\Sh_K(G, X)$ into its  minimal compactification. The intersection cohomology groups 
\[ IH_{\tau, K} = H^\ast_{\text{\'et}}(\Sh^\text{min}_K(G, X)_{\overline{\bbQ}}, j_{! \ast}\cF_{\tau, p}) \]
are $\cH(G(\bbA_\bbQ^\infty),K) \otimes \overline{\bbQ}_p [\Gamma_E]$-modules, where $\cH$ denotes the usual convolution Hecke algebra, and are finite-dimensional as $\overline{\bbQ}_p$-vector spaces. Our aim is to understand the irreducible subquotient $\overline{\bbQ}_p[\Gamma_E]$-modules of $H^\ast_{\text{\'et}}(\Sh^\text{min}_K(G, X)_{\overline{\bbQ}}, j_{! \ast}\cF_{\tau, p})$. If $\pi^\infty$ is an irreducible admissible $\bbC[G(\bbA^\infty)]$-module, then we define
\[ W_p(\pi^\infty) = \Hom_{G(\bbA_\bbQ^\infty)}(\iota^{-1} \pi_\infty, \lim_{K} IH_{\tau, K}). \]
Then $W_p(\pi^\infty)$ is a finite-dimensional $\overline{\bbQ}_p[\Gamma_E]$-module and each irreducible subquotient of $H^\ast_{\text{\'et}}(\Sh^\text{min}_K(G, X)_{\overline{\bbQ}}, j_{! \ast} \cF_{\tau, p})$ is isomorphic to an irreducible subquotient of some  $W_p(\pi^\infty)$.

The action of $G(\bbA_\bbQ^\infty)$ on intersection cohomology can be understood in terms of discrete automorphic representations. Indeed, the Zucker conjecture (proved independently by Looijenga and Saper--Stern) states that $IH_{\tau, K}$ can be identified with the $L^2$-cohomology $H^\ast_{(2)}(\Sh_K(G, X)(\bbC), \cF_{\tau} \otimes_{\overline{\bbQ}} \bbC)$, which admits a decomposition
\[ H^\ast_{(2)}(\Sh_K(G, X)(\bbC), \cF_{\tau} \otimes_{\overline{\bbQ}} \bbC) = \bigoplus_\pi m(\pi) (\pi^\infty)^K \otimes H^\ast(\frg_\bbC, K_1; \pi_\infty \otimes \tau), \]
where $K_1 = \Cent(G(\bbR),h)$ for some choice of $h \in X$. This led Kottwitz to give a conjectural description of the space $W_p(\pi^\infty)$ in terms of the $A$-parameters giving rise to $\pi^\infty$ \cite{Kot90}. The paper \cite{Joh13} gave a reformulation of Kottwitz's conjecture in terms of the $C$-group. We now discuss some consequences of the reformulated conjectures in \cite{Joh13} for the spaces $W_p(\pi^\infty)$. 

In order to describe the contribution of $\pi$ to cohomology, we recall (\cite[\S 1]{Joh13}) that the Shimura datum $(G, X)$ determines an irreducible representation $r_C : {}^C (G_E) \to \GL(V)$, where $V$ is a finite-dimensional complex vector space and $G_E$ denotes base extension to the reflex field $E$. Then (see \cite[Conjecture 8]{Joh13}) Kottwitz\footnote{Kottwitz makes this prediction under additional assumptions on $G$, namely that the derived group $G^\text{der}$ is simply connected and the maximal $\bbR$-split subtorus of $Z(G)$ is $\bbQ$-split. However, it seems natural to predict this in general, and since our discussion is conjectural we will do this in order not to impose further conditions on $G$.}  predicts the existence of an isomorphism
\[ W_p(\pi^\infty) = \bigoplus_{{\psi :  \pi^\infty \in \Pi_{\psi^\infty}}} (U_\pi \otimes (r_C \circ \rho'_\pi|_{\Gamma_E}))_{\epsilon_\psi}, \]
where the direct sum runs over the set of $A$-parameters giving rise to $\pi^\infty$. The precise meaning of the symbols $U_\pi$ and $\epsilon_\psi$ is not important for us here; rather, we need only the following weaker consequence of this prediction, which we can again phrase independently of the existence of the Langlands group. (We note that, according to \cite[p. 59]{Art89}, if $W_p(\pi^\infty) \neq 0$ then there should exist $\pi_\infty$ such that $\pi = \pi^\infty \otimes \pi_\infty$ is a discrete automorphic representation of $G(\bbA_\bbQ)$ and a twist of $\pi_\infty$ is cohomological, in the sense of \S \ref{sec_conjectures_on_galois_representations}.)
\begin{conjecture}\label{conj_consequence_of_Arthur_Kottwitz}
Let $\pi$ be a discrete automorphic representation of $G(\bbA_\bbQ)$ such that $W_p(\pi^\infty) \neq 0$. Then there exists an $\frs\frl_2$-triple $(x, t, y)$ in $\widehat{\widetilde{\frg}}$ and a continuous representation $\rho'_\pi : \Gamma_\bbQ \to {}^C G(\overline{\bbQ}_p)$ as in Conjecture \ref{conj_representations_in_the_C_group}, and each irreducible subquotient $\overline{\bbQ}_p[\Gamma_E]$-module of $W_p(\pi^\infty)$ is isomorphic to an irreducible subquotient of the representation $r_C \circ \rho'_\pi|_{\Gamma_E}$.
\end{conjecture}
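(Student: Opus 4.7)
The plan is to deduce Conjecture \ref{conj_consequence_of_Arthur_Kottwitz} as a formal consequence of the stronger Kottwitz--Arthur prediction in the form of \cite[Conjecture 8]{Joh13} together with the preceding Conjecture \ref{conj_representations_in_the_C_group}. First, I would invoke the observation attributed to \cite[p.~59]{Art89} and recalled in the paragraph just before the conjecture: if $W_p(\pi^\infty)\neq 0$, then there exists a choice of $\pi_\infty$ such that $\pi = \pi^\infty \otimes \pi_\infty$ is a discrete automorphic representation of $G(\bbA_\bbQ)$ with $\pi_\infty$ cohomological up to twist. This places us precisely in the hypothesis of Conjecture \ref{conj_representations_in_the_C_group}, which supplies the $\frs\frl_2$-triple $(x,t,y)$ and a representation $\rho'_\pi : \Gamma_\bbQ \to {}^C G(\overline{\bbQ}_p)$ with all of the required properties (geometricity, local-global compatibility at unramified primes, and the local condition at infinity linking $\rho'_\pi(c)$, a Hodge--Tate cocharacter, and an Adams--Johnson $A$-parameter).

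Next, I would apply Kottwitz's prediction in the reformulation of \cite[Conjecture 8]{Joh13}, which gives the decomposition
\[ W_p(\pi^\infty) \;=\; \bigoplus_{\substack{\psi \\ \pi^\infty \in \Pi_{\psi^\infty}}} \bigl( U_\pi \otimes (r_C \circ \rho'_\pi|_{\Gamma_E}) \bigr)_{\epsilon_\psi}. \]
The key structural point is that the multiplicity space $U_\pi$ carries no Galois action, and the sign character $\epsilon_\psi$ only cuts out a subspace compatible with the tensor factorization; consequently any irreducible $\overline{\bbQ}_p[\Gamma_E]$-subquotient of the right-hand side is, up to isomorphism, an irreducible subquotient of $r_C \circ \rho'_\pi|_{\Gamma_E}$. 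This is exactly the weakened statement claimed in Conjecture \ref{conj_consequence_of_Arthur_Kottwitz}.

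The main obstacle, and the reason the conjecture is stated in this weakened form, is the issue of multiple $A$-parameters $\psi$ contributing to the same $\pi^\infty$: each such $\psi$ a priori gives its own $\rho'_\psi$, whereas the conjecture demands a single $\rho'_\pi$ absorbing all subquotients. I would address this by defining $\rho'_\pi$ to be (the semisimplification of) the direct sum of the $\rho'_\psi$ over the finite set of contributing $\psi$'s. One then has to check that this direct sum still satisfies the three conditions of Conjecture \ref{conj_representations_in_the_C_group} --- most of which are additive in the representation (geometricity, mixedness, unramified local-global compatibility) --- while the infinity-type condition only needs to hold for \emph{some} triple $(x,t,y)$ and $A$-parameter $\psi$, which we can simply take from a fixed contributing parameter. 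A secondary technical point is handling the general setting of \cite{Joh13}, where the original Kottwitz conjecture required $G^\mathrm{der}$ to be simply connected and $Z(G)$ to satisfy a splitting hypothesis; since the conjecture is already hypothetical, I would follow the authors' footnote and simply assume the natural extension of Kottwitz's formula to general $G$.
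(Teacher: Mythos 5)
The statement you are asked about is a \emph{conjecture}, and the paper offers no proof of it: the authors explicitly present it only as a weak consequence of Kottwitz's (unproven) description of $W_p(\pi^\infty)$ in the reformulation of \cite[Conjecture 8]{Joh13}, together with the remark from \cite[p.~59]{Art89} that nonvanishing of $W_p(\pi^\infty)$ should force the existence of a cohomological-up-to-twist $\pi_\infty$. Your proposal reproduces exactly this heuristic chain, so as a reconstruction of the paper's motivation it is on target; but it is a conditional derivation from strictly stronger conjectures, not a proof, and should not be presented as one.

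The one place where you go beyond the paper is your treatment of several $A$-parameters $\psi$ contributing to the same $\pi^\infty$, and there your fix does not typecheck: you cannot form ``the direct sum of the $\rho'_\psi$'' as a representation $\Gamma_\bbQ \to {}^C G(\overline{\bbQ}_p)$, since direct sums are not defined for homomorphisms into a general reductive group (only for their composites with linear representations such as $r_C$). If distinct contributing $\psi$ genuinely yielded non-conjugate $\rho'_\psi$, the conjecture as stated --- which demands a single $\rho'_\pi$ --- would simply be a stronger assertion than what formally follows from \cite[Conjecture 8]{Joh13}, and one would either have to weaken the statement to ``for each irreducible subquotient there exists some $\psi$ and some $\rho'_\psi$ \dots'' or argue (as the authors implicitly do) that the expected rigidity of $A$-parameters attached to a fixed $\pi^\infty$ makes the ambiguity harmless. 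Your observation that $U_\pi$ carries no Galois action and that $\epsilon_\psi$ only selects a subspace is correct and is the right reason why the subquotient claim follows parameter by parameter.
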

\begin{proposition}\label{prop_self_duality}
Let $\pi$ be a discrete automorphic representation of $G(\bbA_\bbQ)$ such that $W_p(\pi^\infty) \neq 0$. Suppose that Conjecture \ref{conj_consequence_of_Arthur_Kottwitz} holds for $\pi$, and let $V$ be an irreducible subquotient $\overline{\bbQ}_p[\Gamma_E]$-module of $W_p(\pi^\infty)$. Let $H_V \subset \GL(V)$ denote the Zariski closure of the image of $\Gamma_E$ and $\rho_V : \Gamma_E \to H_V(\overline{\bbQ}_p)$ the tautological representation. Let $\theta$ be an odd involution of $H_V^\circ$. Let $c \in \Gamma_{\bbQ}$ be a choice of complex conjugation. Then there exists a finite Galois extension $M / E$ such that $\rho_V(\Gamma_M) \subset H_V^\circ(\overline{\bbQ}_p)$ and the representations $\rho_V^c|_{\Gamma_M}$, $\theta \circ \rho_V|_{\Gamma_M}$ are $H_V^\circ(\overline{\bbQ}_p)$-conjugate modulo $Z_{H_V^\circ}$.
\end{proposition}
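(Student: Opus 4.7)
The plan is to combine Conjecture \ref{conj_consequence_of_Arthur_Kottwitz} with a straightforward $C$-group analogue of Proposition \ref{prop_oddness_nontempered_case}, followed by a descent argument performed on adjoint quotients.

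By Conjecture \ref{conj_consequence_of_Arthur_Kottwitz}, there exists a representation $\rho'_\pi : \Gamma_\bbQ \to {}^C G(\overline{\bbQ}_p)$ of the type described in Conjecture \ref{conj_representations_in_the_C_group} such that $V$ appears as an irreducible $\Gamma_E$-subquotient of $r_C \circ \rho'_\pi|_{\Gamma_E}$. Let $H_\pi \subset {}^C G$ denote the Zariski closure of $\rho'_\pi(\Gamma_\bbQ)$. The proof of Proposition \ref{prop_oddness_nontempered_case} goes through verbatim in the $C$-group setting and shows that $\theta_\pi := \Ad \rho'_\pi(c)$ is an odd involution of $H_\pi^\circ$.

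Next I would choose a finite Galois extension $M/\bbQ$ containing $E$ (so $M$ is in particular stable under complex conjugation) such that $\rho'_\pi(\Gamma_M) \subset H_\pi^\circ$ and $\rho_V(\Gamma_M) \subset H_V^\circ$. For $\gamma \in \Gamma_M$ we have the identity
\[ \rho'_\pi(c \gamma c^{-1}) = \rho'_\pi(c) \rho'_\pi(\gamma) \rho'_\pi(c)^{-1} = \theta_\pi(\rho'_\pi(\gamma)) \]
in $H_\pi^\circ$. Composing with $r_C$ and projecting via the subquotient structure of $V$ produces a surjective homomorphism of connected reductive groups $\Phi : H_\pi^\circ \twoheadrightarrow H_V^\circ$ satisfying
\[ \rho_V(\gamma) = \Phi(\rho'_\pi(\gamma)), \qquad \rho_V^c(\gamma) = \Phi(\theta_\pi(\rho'_\pi(\gamma))). \]

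The central step is to descend $\theta_\pi$ through $\Phi$ to an odd involution of $H_V^\circ$. This cannot be done cleanly at the group level because the kernel of $\Phi$ need not be $\theta_\pi$-stable; instead, I would pass to the adjoint quotients. Let $\bar H_\pi := H_\pi^{\circ,\text{ad}}$ and $\bar H_V := H_V^{\circ,\text{ad}}$, with $\bar\Phi$ and $\bar\theta_\pi$ the induced maps. Since $\theta_\pi$ is odd, $\bar\theta_\pi$ is a Chevalley involution of $\bar H_\pi$. The kernel $\bar K$ of $\bar\Phi$ is a closed normal subgroup of the connected semisimple adjoint group $\bar H_\pi$, hence a product of some of its simple factors. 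The key elementary observation is that a Chevalley involution preserves the decomposition of a semisimple group into simple factors: an automorphism that swapped two isomorphic simple factors cannot satisfy the characteristic Chevalley condition $\theta(g) \sim g^{-1}$, as one verifies by taking $g$ supported on a single factor. Therefore $\bar K$ is $\bar\theta_\pi$-stable and $\bar\theta_\pi$ descends to a Chevalley involution $\bar\theta_V$ of $\bar H_V$. In particular, for every $\gamma \in \Gamma_M$,
\[ \bar\rho_V^c(\gamma) = \bar\theta_V(\bar\rho_V(\gamma)). \]

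Finally, the image $\bar\theta$ of the given odd involution $\theta$ in $\bar H_V$ is itself a Chevalley involution, and all Chevalley involutions of a connected semisimple adjoint group form a single conjugacy class under inner automorphisms. Choose $\bar g \in \bar H_V$ with $\bar\theta_V = \Ad(\bar g) \circ \bar\theta$ and lift $\bar g$ to $g \in H_V^\circ(\overline{\bbQ}_p)$. Then for every $\gamma \in \Gamma_M$,
\[ \rho_V^c(\gamma) \equiv g \cdot \theta(\rho_V(\gamma)) \cdot g^{-1} \pmod{Z_{H_V^\circ}}, \]
yielding the proposition, with $M$ viewed as a finite Galois extension of $E$.

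The main obstacle is the descent of $\theta_\pi$ described in the third paragraph: at the group level the kernel of $\Phi$ need not be $\theta_\pi$-stable, and one must exploit the rigid behavior of Chevalley involutions on the simple-factor decomposition of semisimple adjoint groups in order to obtain a canonical descended involution --- it is this mechanism that forces the ``modulo $Z_{H_V^\circ}$'' qualifier in the statement.
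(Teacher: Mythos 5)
Your proposal is correct and follows essentially the same route as the paper: invoke Conjecture \ref{conj_consequence_of_Arthur_Kottwitz} together with Proposition \ref{prop_oddness_nontempered_case} to get an odd involution $\Ad\rho'_\pi(c)$ on the identity component of the ambient Zariski closure, observe that an odd (Chevalley) involution preserves the simple factors of the adjoint group and hence the kernel of the projection onto $H_V^{\circ,\mathrm{ad}}$, and then use conjugacy of Chevalley involutions to compare the descended involution with $\theta$ modulo the centre. The only cosmetic differences are that the paper works with $H_E$ (noting $H_E^\circ=H_\bbQ^\circ$) and replaces $\theta$ by a conjugate at the outset rather than introducing the conjugating element $g$ at the end.
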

\begin{proof}
Let $f : H_V^\circ \to H_V^{\circ ,\text{ad}}$ denote the projection to the adjoint group. We must show that there exists a finite Galois extension $M / E$ such that $\rho_V(\Gamma_M) \subset H_V^\circ(\overline{\bbQ}_p)$ and the representations $f \circ \rho_V^c|_{\Gamma_M}$, $f \circ \theta \circ \rho_V|_{\Gamma_M}$ are $H_V^{\circ, \text{ad}}$-conjugate.  

Let $H_\bbQ$ denote the Zariski closure of $\rho'_\pi(\Gamma_\bbQ)$ in ${}^C G$, and let $H_E$ denote the Zariski closure of $\rho'_\pi(\Gamma_E)$. Then $H_E \subset H_\bbQ$ and $H_E^\circ = H_\bbQ^\circ$. There is a map $H_E \to \GL(V)$, and $H_V$ is equal to the image of this map. Let $r_E : \Gamma_E \to H_E(\overline{\bbQ}_p)$ be the tautological representation. According to Proposition \ref{prop_oddness_nontempered_case}, $\Ad(\rho'_\pi(c))$ induces an odd involution of $H_E^\circ$, which we denote by $\theta'$. Since $\theta'$ is odd, it must leave invariant the simple factors of $H_E^{\circ, \text{ad}}$, and therefore the kernel of the map $H_E^{\circ,\text{ad}} \to H_V^{\circ,\text{ad}}$. Replacing $\theta$ with a conjugate, we can therefore assume without loss generality that the map $H_E^{\circ,\text{ad}} \to H_V^{\circ,\text{ad}}$ intertwines $(\theta')^\text{ad}$ and $\theta^\text{ad}$.

We have $r_E^c = \Ad(\rho'_\pi(c)) \circ r_E$. Let $M / E$ be any Galois extension such that $r_E(\Gamma_M) \subset H_E^\circ$. Then $r_E^c|_{\Gamma_M} = \theta' \circ r_E|_{\Gamma_M}$. Pushing this identity down to $H_V^{\circ, \text{ad}}$ gives the identity $f \circ \rho_V^c|_{\Gamma_M} = \theta^\text{ad} \circ f \circ \rho_V|_{\Gamma_M} = f \circ \theta \circ \rho_V|_{\Gamma_M}$, which is what we needed to prove.
\end{proof}
This leads to the following result.
\begin{theorem}
Let $\rho : \Gamma_E \to \GL_n(\overline{\bbQ}_p)$ be a continuous representation which is strongly irreducible, in the sense that for any finite extension $M / E$, $\rho|_{\Gamma_M}$ is irreducible. Let $\pi$ be a discrete automorphic representation of $G(\bbA_\bbQ)$, and suppose that Conjecture \ref{conj_consequence_of_Arthur_Kottwitz} holds for $\pi$. Then if $\rho$ is isomorphic to a subquotient of $W_p(\pi^\infty)$, then $\rho$ is conjugate self-dual up to twist.
\end{theorem}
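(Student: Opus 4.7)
The plan is to apply Proposition~\ref{prop_self_duality} with $V = \overline{\bbQ}_p^n$, with $H_V \subset \GL_n$ the Zariski closure of $\rho(\Gamma_E)$, and with $\theta$ an odd involution of $H_V^\circ$ (such $\theta$ exist because $H_V^\circ$ is reductive: pick any Chevalley involution of its derived group and extend it compatibly to the connected centre). Strong irreducibility of $\rho$ ensures that $\rho|_{\Gamma_{E'}}$ is irreducible, where $E'/E$ is the finite extension corresponding to the open subgroup $\rho^{-1}(H_V^\circ(\overline{\bbQ}_p))$; hence $H_V^\circ$ already acts irreducibly on $V$, and $Z_{H_V^\circ}$ therefore consists of scalars. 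Proposition~\ref{prop_self_duality} then produces a finite Galois extension $M/E$ containing $E'$, together with an isomorphism of $\GL(V)$-valued representations
\[
\rho^c|_{\Gamma_M} \;\cong\; (\theta \circ \rho|_{\Gamma_M}) \otimes \alpha
\]
for some character $\alpha : \Gamma_M \to \overline{\bbQ}_p^\times$ absorbing the ``modulo $Z_{H_V^\circ}$'' twist.

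The second step converts ``$\theta \circ (-)$'' into genuine duality. Let $r : H_V^\circ \hookrightarrow \GL(V)$ denote the tautological embedding. Since $\theta$ is odd, its restriction to $H_V^{\circ,\text{der}}$ is a Chevalley involution, and Chevalley involutions send every irreducible representation of a semisimple group to its dual. The restriction $r|_{H_V^{\circ,\text{der}}}$ is irreducible (any invariant subspace would, by the scalar action of the connected centre, be $H_V^\circ$-invariant), so $r \circ \theta$ and $r^\vee$ are isomorphic as representations of $H_V^{\circ,\text{der}}$. Extending the intertwiner to $H_V^\circ$, these two irreducibles of $H_V^\circ$ must differ by a character $\eta$ of $H_V^\circ / H_V^{\circ,\text{der}}$. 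Combining this with the previous step and writing $\chi_M := (\eta \circ \rho|_{\Gamma_M}) \cdot \alpha$, I obtain
\[
\rho^c|_{\Gamma_M} \;\cong\; \rho^\vee|_{\Gamma_M} \otimes \chi_M.
\]

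The final step descends this to $\Gamma_E$. For any $h \in \Gamma_E$, applying the operation ``precompose with conjugation by $h$'' to both sides preserves each side up to isomorphism (since both are restrictions of $\Gamma_E$-representations) while replacing $\chi_M$ by $\chi_M^h$. Irreducibility of $\rho^\vee|_{\Gamma_M}$ makes the character twist between two isomorphic irreducibles unique, so $\chi_M^h = \chi_M$ for every $h \in \Gamma_E$. Equivalently, in the decomposition $(\rho^c \otimes \rho)|_{\Gamma_M} \cong \End(\rho|_{\Gamma_M}) \otimes \chi_M$ the $\chi_M$-isotypic component is a one-dimensional line (by Schur's lemma and the irreducibility of $\rho|_{\Gamma_M}$), and it is stable under the ambient $\Gamma_E$-action on $\rho^c \otimes \rho$. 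The character $\chi : \Gamma_E \to \overline{\bbQ}_p^\times$ by which $\Gamma_E$ acts on this line restricts on $\Gamma_M$ to $\chi_M$, and unwinding the identification yields a nonzero $\Gamma_E$-equivariant map $\rho^\vee \otimes \chi \to \rho^c$, necessarily an isomorphism since both sides are irreducible. The principal technical obstacle is the bookkeeping in the middle step: the notion of odd involution used here is flexible on the connected centre of $H_V^\circ$, so the character $\eta$ must be produced by a careful Clifford-theoretic argument and then combined with $\alpha$ to isolate a single twist $\chi_M$; once this is done, strong irreducibility makes the descent from $\Gamma_M$ to $\Gamma_E$ essentially automatic.
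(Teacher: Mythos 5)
Your proposal follows the same overall strategy as the paper's proof: apply Proposition \ref{prop_self_duality} to the Zariski closure $H_V$ with an odd involution $\theta$, convert the resulting conjugacy modulo $Z_{H_V^\circ}$ into an isomorphism $\rho^c|_{\Gamma_M} \cong \rho^\vee|_{\Gamma_M}\otimes\chi_M$, and then descend the twist to $\Gamma_E$. Two of your steps are executed differently. First, the paper takes $\theta$ to be a Chevalley involution of $H_V^\circ$, which inverts a maximal torus and hence satisfies $V\circ\theta\cong V^\vee$ outright; your Clifford-theoretic correction by the character $\eta$ is needed only because you allow $\theta$ to act arbitrarily on the connected centre, and it is harmless since $\eta\circ\rho|_{\Gamma_M}$ is absorbed into $\chi_M$. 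Second, the paper descends via Lemma \ref{lem_twists} (an explicit cocycle computation with $g_\sigma = r_2(\sigma)^{-1}r_1(\sigma)$), whereas you locate the extension of $\chi_M$ as the character by which $\Gamma_E$ acts on the $\chi_M$-isotypic line inside $\rho^c\otimes\rho$; both routes work. The one place where your justification is too weak is the claim that ``irreducibility of $\rho^\vee|_{\Gamma_M}$ makes the character twist between two isomorphic irreducibles unique'': irreducibility alone does not preclude $\rho|_{\Gamma_M}\cong\rho|_{\Gamma_M}\otimes\mu$ for a nontrivial character $\mu$ (induced representations give counterexamples). To obtain $\chi_M^h=\chi_M$ you must argue, exactly as in the second case of the paper's Lemma \ref{lem_twists}, that such a $\mu=\chi_M^h/\chi_M$ would have finite order (compare determinants) and would force $\rho|_{\Gamma_{M'}}$ to be reducible for $M'$ the fixed field of $\ker\mu$, contradicting strong irreducibility. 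With that repair your argument is complete.
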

\begin{proof}
Let $H_V$ denote the Zariski closure of the image of $\rho$ in $\GL_n = \GL(V)$, and let $\theta$ be a Chevalley involution of $H_V^\circ$. Let $\rho_V : \Gamma_E \to H_V(\overline{\bbQ}_p)$ denote the tautological representation. Our hypotheses imply that $V$ is an irreducible representation of $H_V^\circ$, and $V \circ \theta  \cong V^\vee$. Proposition \ref{prop_self_duality} implies that there is a finite Galois extension $M / E$ such that $\rho_V(\Gamma_M) \subset H_V^\circ(\overline{\bbQ}_p)$ and $\rho_V^c |_{\Gamma_M}$ and $\theta \circ \rho_V|_{\Gamma_M}$ are $H_V^\circ(\overline{\bbQ}_p)$-conjugate modulo the centre of $H_V^\circ$. 

It follows that there is a continuous character $\chi_M : \Gamma_M \to \overline{\bbQ}_p^\times$ such that $\rho^c|_{\Gamma_M}$ and $\rho^\vee|_{\Gamma_M} \otimes\chi_M$ are $\GL_n(\overline{\bbQ}_p)$-conjugate. Lemma \ref{lem_twists} implies that $\chi_M$ extends to a character $\chi : \Gamma_E \to \overline{\bbQ}_p^\times$ such that $\rho^c \cong \rho^\vee \otimes \chi$. This completes the proof.
\end{proof}
\begin{lemma}\label{lem_twists}
Let $\Gamma$ be a group, let $\Delta \subset \Gamma$ be a finite index normal subgroup, and let $r_1, r_2 : \Gamma \to \GL_n(\overline{\bbQ}_p)$ be representations such that $r_1|_\Delta = r_2|_\Delta \otimes \chi$ for some character $\chi : \Delta \to \overline{\bbQ}_p^\times$. Suppose that for any finite index subgroup $\Delta' \subset \Gamma$, $r_1|_{\Delta'}$ is irreducible. Then $\chi$ extends to a character $\chi' : \Gamma \to \overline{\bbQ}_p$ such that $r_1 = r_2 \otimes \chi'$.
\end{lemma}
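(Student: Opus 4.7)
The plan is to package the twist character into a one-dimensional subspace of an auxiliary $\Gamma$-representation and then show that this subspace is globally $\Gamma$-stable. Concretely, I would consider the internal Hom $V = \Hom_{\overline{\bbQ}_p}(r_2, r_1)$ with its natural $\Gamma$-action $(g\cdot\phi)(v) = r_1(g)\,\phi\bigl(r_2(g)^{-1}v\bigr)$. For $\delta \in \Delta$, an element $\phi \in V$ satisfies $\delta\cdot\phi = \chi(\delta)\phi$ if and only if $\phi$ is a $\Delta$-intertwiner $r_2 \otimes \chi \to r_1$. Since $r_1|_\Delta \cong r_2|_\Delta \otimes \chi$ is irreducible (by the hypothesis applied to $\Delta$ itself), Schur's lemma shows that the $\chi$-isotypic piece $L := V^{\Delta = \chi}$ is exactly one-dimensional, and any nonzero $\phi_0 \in L$ is an isomorphism.

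The heart of the argument is to show that $L$ is preserved by $\Gamma$. For any $g \in \Gamma$ the line $g \cdot L$ is the $\chi^{g^{-1}}$-isotypic piece of $V|_\Delta$, where $\chi^h(\delta) := \chi(h^{-1}\delta h)$. So I need to establish the conjugation-invariance $\chi^g = \chi$ for all $g \in \Gamma$. Applying $g$-conjugation to the isomorphism $r_1|_\Delta \cong r_2|_\Delta \otimes \chi$ and using that $r_i$ is already a representation of $\Gamma$ (so $r_i^g \cong r_i$ as $\Delta$-reps via $r_i(g)$), I get $r_2|_\Delta \otimes \chi \cong r_2|_\Delta \otimes \chi^g$, hence an isomorphism $r_2|_\Delta \cong r_2|_\Delta \otimes \eta$ with $\eta := \chi^g \chi^{-1}$.

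The main obstacle is to show that such a self-twist $\eta$ must be trivial; this is exactly where the strong irreducibility hypothesis is used. Comparing determinants gives $\eta^n = 1$, so $\eta$ has finite order and $K := \ker\eta$ has finite index in $\Delta$, hence in $\Gamma$. An intertwiner $A$ realizing $r_2|_\Delta \cong r_2|_\Delta \otimes \eta$ satisfies $A\,r_2(\delta)\,A^{-1} = \eta(\delta)\,r_2(\delta)$ and restricts to a self-intertwiner of $r_2|_K$. By hypothesis $r_1|_K$, and hence $r_2|_K = r_1|_K \otimes \chi^{-1}|_K$, is irreducible, so Schur forces $A$ to be a scalar. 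Substituting back gives $\eta \equiv 1$ on $\Delta$, contradicting $\eta \neq 1$ unless $\chi^g = \chi$.

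With conjugation-invariance in hand, $g \cdot L \subseteq V^{\Delta = \chi} = L$ for every $g \in \Gamma$, so $\Gamma$ acts on $L$ through a character $\chi' : \Gamma \to \overline{\bbQ}_p^\times$ which manifestly restricts to $\chi$ on $\Delta$. Unwinding the action, the relation $g \cdot \phi_0 = \chi'(g)\phi_0$ reads $r_1(g)\,\phi_0 = \chi'(g)\,\phi_0\, r_2(g)$; since $\phi_0$ is an isomorphism, conjugation by $\phi_0^{-1}$ gives $r_1 \cong r_2 \otimes \chi'$ as representations of $\Gamma$, which is the desired conclusion.
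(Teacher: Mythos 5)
Your proof is correct and is essentially the paper's argument: both reduce to showing $\chi^g=\chi$ for all $g$ (ruling out a nontrivial self-twist of $r_2|_\Delta$ by noting it has finite order via determinants and then invoking irreducibility on its kernel, a finite-index subgroup), and then apply Schur's lemma over $\Delta$ to extract the scalar $\chi'(g)$. The only difference is cosmetic: you package the intertwiners as a $\Gamma$-stable line in $\Hom(r_2,r_1)$, whereas the paper computes directly with the matrices $g_\sigma=r_2(\sigma)^{-1}r_1(\sigma)$, and you run the self-twist step as a direct Schur argument rather than the paper's reducibility contradiction.
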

\begin{proof}
If $\sigma \in \Gamma$, $\tau \in \Delta$, we have
\[ r_1(\sigma \tau \sigma^{-1}) = r_1(\sigma) r_1(\tau) r_1(\sigma^{-1}) = r_1(\sigma) r_2(\tau) \chi(\tau) r_1(\sigma^{-1}) \]
and
\[  r_1(\sigma \tau \sigma^{-1}) = r_2(\sigma) r_2(\tau) r_2(\sigma^{-1}) \chi(\sigma \tau \sigma^{-1}). \]
It follows that if $g_\sigma = r_2(\sigma)^{-1}r_1(\sigma)$, then for all $\tau \in \Delta$ we have
\[ g_\sigma r_2(\tau) g_\sigma^{-1} = r_2(\tau) \chi^\sigma(\tau) \chi(\tau)^{-1}. \]
There are two cases. If $\chi^\sigma = \chi$ for all $\sigma \in \Gamma$, then the irreducibility of $r_2|_{\Delta}$ implies that $g_\sigma = \lambda_\sigma \in \overline{\bbQ}_p^\times$, and we are done on defining $\chi'(\sigma) = \lambda_\sigma$.

Otherwise, there exists $\sigma \in \Gamma$ such that $\chi^\sigma \neq \chi$. Then $r_2 \cong r_2 \otimes \chi^\sigma \otimes \chi^{-1}$. The character $\chi^\sigma / \chi$ has order dividing $n$, and if $\Delta' = \ker \chi^\sigma / \chi$ then $r_2|_{\Delta'}$ must be reducible, contradicting our assumptions. This completes the proof.
\end{proof}

\section{Cohomology of open Shimura varieties}\label{open}

The previous section deals with the question of Galois representations appearing as subquotients in the intersection cohomology of minimal compactifications of Shimura varieties. It is of course natural to ask what, if anything, changes if one considers other types of cohomology groups. The most natural choices here are cohomology and compactly supported cohomology (with values in an automorphic $\ol{\bbQ}_{p}$-local system) of the open Shimura varieties. These two are related by Poincar\'e duality, so it suffices to consider ordinary cohomology. The expectation is then that the Galois representations occurring as subquotients of the cohomology of open Shimura varieties are exactly the same as those occurring in the intersection cohomology of minimal compactifications. 

At least under some assumptions on the Shimura datum, this is known and due to Morel. Since the precise statement in this form is not (as far as we know) in the literature, we sketch one way of deducing it from work of Morel \cite{Mor08, Mor10, Mor}, which we learnt from a talk of Morel at the Institute for Advanced Study \cite{Mor11}. Any mistakes below are entirely due to the authors. See also work of Nair for another approach \cite{Nai}.

The proof relies of Morel's theory of weight truncations and Pink's formula. In \cite{Hub97}, Huber constructs a triangulated category, which we will denote by $D_{m}(Y,\ol{\bbQ}_{p})$, of so-called horizontal mixed complexes of constructible $\ol{\bbQ}_{p}$-sheaves on any separated scheme $Y$ of finite type over any number field $F$. Informally speaking, `horizontal' means that the complexes extend to complexes of constructible $\ol{\bbQ}_{p}$-sheaves on some flat model of $Y$ over some open subset $U$ of $\Spec \cO_{F}$. A horizontal complex is pure if its specializations to all but finitely many closed points of $U$ are pure (with constant weight), and mixed complexes are those arising as extensions of pure complexes. $D_{m}(Y,\ol{\bbQ}_{p})$ admits a perverse t-structure, whose core $Perv_{m}(Y)$ is the so-called category of horizontal mixed perverse sheaves on $Y$. We refer to \cite[\S 1-3]{Hub97} for precise definitions. In \cite{Mor}, Morel considers the full subcategory $\ms{M}(Y)$ of $Perv_{m}(Y)$ consisting of objects admitting a weight filtration; see \cite[Definition 3.7]{Hub97} for the definition of a weight filtration on objects of $Perv_{m}(Y)$. A weight filtration is unique if it exists and morphisms between two objects admitting weight filtrations are strict \cite[Lemma 3.8]{Hub97}. Morel proves that $\ms{M}(Y)$ and its bounded derived category $D^{b}(\ms{M}(Y))$ satisfies a long list of properties (cf. \cite{Mor} Th\'eor\`eme 3.2 and Proposition 6.1), including stability under six operations and Tate twists and that it contains $\ol{\bbQ}_{p}[d]$ if $Y$ is smooth and pure of dimension $d$. Moreover, all objects in $D^{b}(\ms{M}(Y))$ admit weight filtrations and one may define weight truncations $w_{\leq a}$, $w_{> a}$ ($a\in \bbZ \cup \{ \pm \infty \}$) as in \cite{Mor08}; see \cite[\S 8]{Mor}. 

We recall some notation for Shimura varieties and their minimal compactifications used in the previous section, and then set up some more notation. We let $G$ be a connected reductive group over $\bbQ$ and $(G,X)$ a Shimura datum where we allow $G^{ad}$ to have simple factors of compact type over $\bbQ$ (to allow zero-dimensional Shimura varieties), and we write $E$ for the reflex field of $(G,X)$. If $K\sub G(\bbA_{f})$ is a neat compact open subgroup, we let $\Sh_{K}=\Sh_{K}(G,X)$ denote the canonical model (over $E$) of the complex Shimura variety of level $K$. We let $\Sh_{K}^{\min}=\Sh_{K}^{\min}(G,X)$ denote its minimal compactification and we write $j : \Sh_{K} \to \Sh_{K}^{\min}$ for the open embedding. Recall that a parabolic subgroup $P \sub G$ defined over $\bbQ$ is called admissible if its image in every simple fact $G^{\prime}$ of $G^{ad}$ is either $G^{\prime}$ or a maximal proper parabolic of $G^{\prime}$. We write $N_{P}$ for the unipotent radical of $P$, $U_{P}$ for the center of $N_{P}$, and $M_{P}=P/N_{P}$ for the Levi quotient. We let $X_{P}$ be the boundary component corresponding to $P$. Following \cite[p. 2--3]{Mor10}, we make the following additional assumptions on $(G,X)$: First, assume that $G^{ad}$ is simple. Second, for every admissible parabolic $P$ of $G$, there exist connected reductive subgroups $L_{P}$ and $G_{P}$ of $M_{P}$ such that

\begin{itemize}
\item $M_{P}$ is the direct product of $L_{P}$ and $G_{P}$;

\item $G_{P}$ contains a certain normal subgroup $G_{1}$ of $M_{P}$ defined by Pink \cite[(3.6)]{Pin92}, and the quotient $G_{P}/Z(G_{P})G_{1}$ is $\bbR$-anisotropic;

\item $L_{P}\sub Cent_{M_{P}}(U_{P}) \sub Z(M_{P})L_{P}$;

\item $G_{P}(\bbR)$ acts transitively on $X_{P}$, and $L_{P}(\bbR)$ acts trivially on $X_{P}$;

\item for every neat compact open subgroup $K_{M}\sub M_{P}(\bbA_{f})$, $K_{M}\cap L_{P}(\bbQ)=K_{M} \cap Cent_{M_{P}(\bbQ)}(X_{P})$.
\end{itemize}

If $G$ satisfies these assumptions, then $G_{P}$ satisfies these assumptions for any admissible parabolic $P$ of $G$ \cite[Remark 1.1.1]{Mor10}. These assumptions are satisfied if $G$ is an inner form of a unitary similitude or symplectic simlitude group \cite[Example 1.1.2]{Mor10}. Let us briefly describe the stratification on $\Sh_{K}^{\min}$ (under these assumptions). Let $P$ be an admissible parabolic of $G$ and let $Q_{P}\sub P$ be the preimage of $G_{P}$. There is a Shimura datum $(G_{P}, X_{P})$ with reflex field $E$. Let $g \in G(\bbA_{f})$. Set $H_{P}=gKg^{-1}\cap P(\bbQ)Q_{P}(\bbA_{f})$, $H_{L}=gKg^{-1}\cap L_{P}(\bbQ)N_{P}(\bbA_{f})$, $K_{Q}=gKg^{-1}\cap Q_{P}(\bbA_{f})$, and $K_{N}=gKg^{-1}\cap N_{P}(\bbA_{f})$. Then there is a morphism
$$ \Sh_{K_{Q}/K_{N}}(G_{P},X_{P}) \to \Sh_{K}^{\min} $$
which is finite over its image. The group $H_{P}$ acts on $\Sh_{K_{Q}/K_{N}}(G_{P},X_{P})$ and the action is trivial on the normal subgroup $H_{L}K_{Q}$; moreover, $H_{P}/H_{L}K_{Q}$ is finite. Quotienting out by the action of $H_{P}$ gives a locally closed immersion
$$ i_{P,g} : \Sh_{K_{Q}/K_{N}}(G_{P},X_{P})/H_{P} \to \Sh_{K}^{\min}, $$
which extends to a finite morphism
$$ \ol{i}_{P,g} : \Sh_{K_{Q}/K_{N}}^{\min}(G_{P},X_{P})/H_{P} \to \Sh_{K}^{\min}. $$
The boundary of $\Sh_{K}^{\min}$ is the union of the images of the $i_{P,g}$ for proper admissible parabolics $P$ and elements $g\in G(\bbA_{f})$. If $P,P^{\prime}$ are admissible parabolics and $g,g^{\prime}\in G(\bbA)_{f}$, then the images of $i_{P,g}$ and $i_{P^{\prime},g^{\prime}}$ are either equal or disjoint, and they are equal if and only if there exists a $\gamma \in G(\bbQ)$ such that $P^{\prime}=\gamma P \gamma^{-1}$ and $P(\bbQ)Q_{P}(\bbA_{f})gK=P(\bbQ)Q_{P}(\bbA_{f})\gamma^{-1}g^{\prime}K$.

In view of this, we fix a minimal parabolic subgroup $P_{0}$ of $G$ and let $P_{1},\dots,P_{n}$ be the admissible parabolics of $G$ containing $P_{0}$, where the order is defined by $r\leq s$ if and only if $U_{P_{r}}\sub U_{P_{s}}$. We simplify the notation and write $N_{r}=N_{P_{r}}$, $i_{r,g}=i_{P_{r},g}$ etc. Then, the boundary of $\Sh_{K}^{\min}$ is the union of the images of the $i_{r,g}$ for $r=1,\dots,n$ and $g\in G(\bbA_{f})$, and $i_{r,g}$ and $i_{s,h}$ have the same image if and only if $r=s$ and there is a $\gamma\in G(\bbQ)$ such that $P_{r}(\bbQ)Q_{r}(\bbA_{f})gK=P_{r}(\bbQ)Q_{r}(\bbA_{f})\gamma^{-1}hK$. For a fixed $r$, put 
$$ \Sh_{K,r}=\bigcup_{g\in G(\bbA_{f})}{\rm Im}(i_{r,g}).$$
The $\Sh_{K,r}$ are locally closed subvarieties of $\Sh_{K}^{\min}$ and the collection $\Sh_{K,1},\dots,\Sh_{K,n}$ defines a stratification of $\Sh^{\min}_{K}$ in the sense of \cite[D\'efinition 3.3.1]{Mor08}. We let $i_{r}: \Sh_{K,r} \to \Sh_{K}^{\min}$ denote the inclusion. By \cite[Corollaire 8.1.4]{Mor}, all results of \cite[\S 3]{Mor08} go through in the setting of \cite[\S 7]{Mor}. In particular, following \cite[Proposition 3.3.4]{Mor08}, we may define ${}^{w}D^{\leq \ul{a}}= {}^{w}D^{\leq \ul{a}}(\ms{M}(\Sh_{K}^{\min}))$ (resp. ${}^{w}D^{> \ul{a}}={}^{w}D^{> \ul{a}}(\ms{M}(\Sh_{K}^{\min}))$) for any $\ul{a}=(a_{1},\dots,a_{n}) \in (\bbZ \cup \{\pm \infty\})^{n}$ to be the full subcategory of $D^{b}(\ms{M}(\Sh_{K}^{\min}))$ of objects $C$ such that $i_{r}^{\ast}C\in D^{b}(\ms{M}(\Sh_{K,r}))$ (resp. $i_{r}^{!}C\in D^{b}(\ms{M}(\Sh_{K,r}))$) has weights $\leq a_{r}$ (resp. $>a_{r}$) for all $r\in \{1,\dots,n\}$. Then $({}^{w}D^{\leq \ul{a}}, {}^{w}D^{> \ul{a}})$ defines a t-structure on $D^{b}(\ms{M}(\Sh_{K}^{\min}))$ and we get weight truncation functors $w_{\leq \ul{a}}$ and $w_{> \ul{a}}$ for all $\ul{a} \in (\bbZ \cup \{\pm \infty\})^{n}$. In particular, for any $r\in \{1,\dots,n\}$ and $a\in \bbZ \cup \{\pm \infty\}$, we get weight truncation functors 
$$w_{\leq a}^{r} := w_{\leq (+\infty,\dots, +\infty,a,+\infty, \dots, +\infty)};$$ 
$$w_{> a}^{r} := w_{>(-\infty,\dots, -\infty,a,-\infty, \dots, -\infty)},$$
where $a$ is in the $r$-th place. 

The final piece of notation and terminology we need concerns automorphic lisse $\ol{\bbQ}_{p}$-sheaves on $\Sh_{K}$. Let $Rep_{G}$ be the (semisimple) abelian category of algebraic representations of $G$ over $\ol{\bbQ}_{p}$ and let $D^{b}(Rep_{G})$ be its bounded derived category. There is an additive triangulated functor
$$ \cF^{K} : D^{b}(Rep_{G}) \to D_{c}^{b}(\Sh_{K},\ol{\bbQ}_{p}) $$
to the derived category of constructible $\ol{\bbQ}_{p}$-sheaves on $\Sh_{K}$ \cite[(1.10)]{Pin92}, \cite[2.1.4]{Mor06}. If $V\in Ob(D^{b}(Rep_{G}))$, then all cohomology sheaves of $\cF^{K}V$ are lisse and in particular perverse up to shift.

Assume now that $(G,X)$ is of abelian type. We may then find a finite set of primes $\Sigma$ of $E$, containing all primes above $p$, such that all objects above extend to $\Spec \cO_{E} \setminus \Sigma$. More precisely, by \cite[Proposition 1.3.4]{Mor10} we may choose $\Sigma$ such that conditions (1)-(7) on \cite[p. 8]{Mor10} are satisfied. In particular, conditions (5) and (7) imply that the functor $\cF^{K}$ may be naturally viewed as a functor
$$ \cF^{K} : D^{b}(Rep_{G}) \to D^{b}(\ms{M}(\Sh_{K})); $$ 
similar remarks apply for all strata of the minimal compactifications. We remark that $Rep_{G}$ has a notion of weight coming from the Shimura datum (see \cite[p.7]{Mor10}) and condition (7) says, in particular, that if $V$ is pure then $\cF^{K}V$ is pure. Note also that condition (6) says that Pink's formula holds for our integral model with the extended (complexes of) sheaves. Let us now state and prove the main theorem of this section.

\begin{theorem}\label{morel} Consider the collection of all Shimura data $(G,X)$ of abelian type which satisfies the list of conditions in the bullet points above, with $G^{ad}$ simple. Then the intersection cohomology groups $H^{\ast}_{\text{\'et}}(\Sh_{K}^{\min}(G,X)_{\overline{\bbQ}},j_{!\ast}\cF^{K}V)$, for all $(G,X)$ as above and all levels $K$ and $V\in D^{b}(Rep_{G})$, contain the same irreducible Galois representation as subquotients as the ordinary cohomology groups $H^{\ast}_{\text{\'et}}(\Sh_{K}(G, X)_{\overline{\bbQ}},\cF^{K}V)$. 
\end{theorem}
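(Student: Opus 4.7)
The plan is to induct on the dimension $d$ of $\Sh_{K}(G,X)$, using Morel's weight truncation formalism together with Pink's formula to express the cohomology of the open Shimura variety in terms of intersection cohomologies of smaller Shimura varieties in the collection. The base case $d=0$ is immediate since $j$ is the identity. For the inductive step, after writing a general $V\in D^{b}(Rep_{G})$ as an iterated extension of pure objects, we may assume $V$ is pure, so that $\cF^{K}V$ is a pure object of $\ms{M}(\Sh_{K})$. Morel's theorem (see \cite[\S 3]{Mor08}, carried over to the horizontal mixed setting in \cite[\S 7--8]{Mor}) produces, for the appropriate weight $a$, a canonical isomorphism $j_{!\ast}\cF^{K}V \cong w_{\leq a}\,Rj_{\ast}\cF^{K}V$ in $D^{b}(\ms{M}(\Sh_{K}^{\min}))$, fitting into a distinguished triangle
\[ j_{!\ast}\cF^{K}V \longrightarrow Rj_{\ast}\cF^{K}V \longrightarrow w_{>a}Rj_{\ast}\cF^{K}V \longrightarrow [1] \]
whose third term is supported on the boundary $\Sh_{K}^{\min}\setminus \Sh_{K}$.

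Applying $H^{\ast}_{\text{\'et}}(\Sh_{K}^{\min},-)$ and using the identification $H^{\ast}_{\text{\'et}}(\Sh_{K},\cF^{K}V) = H^{\ast}_{\text{\'et}}(\Sh_{K}^{\min}, Rj_{\ast}\cF^{K}V)$, the long exact sequence of the triangle shows that every irreducible Galois subquotient of either the ordinary cohomology or the intersection cohomology either appears in the other, or appears in the boundary hypercohomology $H^{\ast}_{\text{\'et}}(\Sh_{K}^{\min}, w_{>a}Rj_{\ast}\cF^{K}V)$. Thus both directions of the theorem reduce to showing that each irreducible subquotient of this boundary hypercohomology is already a subquotient of both the intersection cohomology and the ordinary cohomology of some Shimura variety of dimension strictly less than $d$ in our collection.

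For this one iterates the truncations $w^{r}_{\leq a_{r}}$ stratum-by-stratum along $\Sh_{K,1},\dots,\Sh_{K,n}$, as in \cite[\S 3]{Mor08} and \cite[\S 8]{Mor}. By Pink's formula (condition (6) on \cite[p. 8]{Mor10}) and Kostant's decomposition of $H^{\ast}(\frn_{r},V)$ as a $L_{P_{r}}$-module, each restriction $i_{r}^{\ast}Rj_{\ast}\cF^{K}V$ is an iterated extension of complexes induced from representations of $G_{P_{r}}$ on Kostant summands of $V$. Successive application of the weight truncations then identifies the graded pieces of the weight filtration on $w_{>a}Rj_{\ast}\cF^{K}V$, up to pushforward along $\ol{i}_{P_{r},g}$ and summation over the finite families of boundary components indexed by $g\in G(\bbA_{f})$, with $j^{(r)}_{!\ast}\cF^{K_{P_{r},g}}V_{r}$ on the smaller Shimura varieties $\Sh_{K_{P_{r},g}}(G_{P_{r}},X_{P_{r}})$, for appropriate $V_{r}\in D^{b}(Rep_{G_{P_{r}}})$. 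Their global cohomology is therefore exactly the intersection cohomology of these smaller Shimura varieties, and the data $(G_{P_{r}},X_{P_{r}})$ lies (after decomposing into $G_{P_{r}}^{ad}$-simple factors and using a K\"unneth decomposition) in our collection. Since the associated Shimura varieties are of strictly smaller dimension, the inductive hypothesis applies and both inclusions follow.

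The main technical obstacle is the precise stratum-by-stratum identification of the graded pieces described in the last step: combining Pink's formula, Kostant's theorem, compatibility with the weight filtration, and the $H_{P_{r}}$-equivariance needed to descend from $\Sh_{K_{Q}/K_{N}}(G_{P},X_{P})$ to the actual boundary stratum of $\Sh_{K}^{\min}$, as well as reducing to the case where $G_{P_{r}}^{\mathrm{ad}}$ is simple. All of this occupies the bulk of \cite{Mor08, Mor10, Mor}, and one must verify the conditions (1)--(7) of \cite[p. 8]{Mor10} are preserved under passage to boundary data; once this identification is in place, the long exact sequence together with the inductive hypothesis yields the theorem formally.
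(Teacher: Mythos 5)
Your overall strategy---induction on $\dim\Sh_{K}$, reduction to pure $V$, the identity $j_{!\ast}\cF^{K}V \cong w_{\leq a}Rj_{\ast}\cF^{K}V$, and a long exact sequence pushing both inclusions onto a boundary term---is the same as the paper's. The genuine gap is in your treatment of that boundary term. You assert that the weight-graded pieces of $w_{>a}Rj_{\ast}\cF^{K}V$ are (pushforwards along $\ol{i}_{P_{r},g}$ of) intermediate extensions $j^{(r)}_{!\ast}\cF^{K_{P_{r},g}}V_{r}$, so that the boundary hypercohomology is built from \emph{intersection} cohomology of smaller Shimura data. That is not what Pink's formula plus Kostant's theorem deliver: they identify $i_{r}^{\ast}Rj_{\ast}\cF^{K}V$ with automorphic (lisse) complexes on the boundary strata, whose hypercohomology is \emph{ordinary} cohomology of the smaller Shimura varieties. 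To decompose the weight-graded pieces into intermediate extensions of automorphic local systems you would need a semisimplicity/decomposition-theorem statement for pure objects of the horizontal mixed category $\ms{M}(\cdot)$, which is not among the properties Morel establishes and which her weight-truncation formalism is designed precisely to avoid. (Even granting such a decomposition, the coefficients of the resulting intersection complexes would a priori only be subquotients of automorphic local systems, whereas your induction hypothesis covers only $V_{r}\in D^{b}(Rep_{G_{P_{r}}})$; and the hypercohomology of $w_{>a}Rj_{\ast}\cF^{K}V$ is only an iterated extension of that of its graded pieces, so ``exactly the intersection cohomology'' is in any case too strong.)

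The paper closes the induction the other way around. Using the stratum-by-stratum truncations one builds an $(n-1)$-dimensional hypercube of distinguished triangles interpolating between $j_{!\ast}\cV=w_{\leq a}Rj_{\ast}\cV$ (with $\cV=(\cF^{K}V)[d]$) and $Rj_{\ast}\cV$, whose remaining vertices are the complexes $Ri_{n_{r}\ast}w_{>a}i_{n_{r}}^{\ast}\cdots Ri_{n_{1}\ast}w_{>a}i_{n_{1}}^{\ast}Rj_{\ast}\cV$; by \cite[Proposition 1.4.5]{Mor10} the cohomology of each of these is a subquotient of a direct sum of \emph{ordinary} cohomology groups of automorphic complexes on the boundary strata, which are finite quotients of smaller Shimura varieties in the collection. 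The induction hypothesis, which equates the sets of irreducible Galois subquotients of ordinary and intersection cohomology in smaller dimension, then converts these ordinary boundary contributions into whatever form is needed and finishes both inclusions. You should therefore replace your identification of the boundary graded pieces with intersection complexes by this appeal to ordinary cohomology of the boundary: it is what Morel actually proves, and it is all the induction requires.
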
 

\begin{proof}
We will prove the following claim by induction on $d$: 

\emph{Claim:} For any $d\in \bbZ_{\geq 0}$, the intersection cohomology groups $H^{\ast}_{\text{\'et}}(\Sh_{K}^{\min}(G,X)_{\overline{\bbQ}},j_{!\ast}\cF^{K}V)$, for all $(G,X)$ as in the theorem with $\dim \Sh_{K}(G,X) \leq d$ (and all levels $K$ and $V\in D^{b}(Rep_{G})$), contain the same irreducible Galois representation as subquotients as the ordinary cohomology groups $H^{\ast}_{\text{\'et}}(\Sh_{K}(G, X)_{\overline{\bbQ}},\cF^{K}V)$ for all $(G,X)$ as in theorem with $\dim \Sh_{K}(G,X) \leq d$ (and all levels $K$ and $V\in D^{b}(Rep_{G})$). 

This would clearly give us the theorem. If $d=0$, then $H^{\ast}(\Sh_{K}^{\min},j_{!\ast}\cF^{K}V)=H^{\ast}(\Sh_{K},\cF^{K}V)$ and the assertion is clear. For the induction step, assume $d=\dim \Sh_{K} \geq 1$. Without loss of generality assume that $V$ is concentrated in a single degree and that it is pure. Let $a$ be the weight of $\cV := (\cF^{K}V)[d]$; this is a pure perverse sheaf in $\ms{M}(\Sh_{K})$. By \cite[Proposition 3.3.4]{Mor08} (which holds in our situation by \cite[Corollaire 8.1.4]{Mor}) we have a distinguished triangle
$$ w_{\leq a}^{2}Rj_{\ast}\cV \to Rj_{\ast}\cV \to Ri_{2\ast}w_{>a}i_{2}^{\ast}Rj_{\ast}\cV \to $$
in $D^{b}(\ms{M}(\Sh_{K}^{\min}))$. Applying \cite[Proposition 3.3.4]{Mor08} again to this triangle, we get a square
\[
\xymatrix{ w_{\leq a}^{3}w^{2}_{\leq a}Rj_{\ast}\cV \ar[r]\ar[d] & w_{\leq a}^{3}Rj_{\ast}\cV \ar[r]\ar[d] & w_{\leq a}^{3}Ri_{2\ast}w_{>a}i_{2}^{\ast}Rj_{\ast}\cV \ar[r]\ar[d] & \\
w_{\leq a}^{2}Rj_{\ast}\cV \ar[r]\ar[d] & Rj_{\ast}\cV \ar[r]\ar[d] & Ri_{2\ast}w_{>a}i_{2}^{\ast}Rj_{\ast}\cV \ar[r]\ar[d] & \\
Ri_{3\ast}w_{>a}i_{3}^{\ast}w_{\leq a}^{2}Rj_{\ast}\cV \ar[r]\ar[d] & Ri_{3\ast}w_{>a}i_{3}^{\ast}Rj_{\ast}\cV \ar[r]\ar[d] & Ri_{3\ast}w_{>a}i_{3}^{\ast}Ri_{2\ast}w_{>a}i_{2}^{\ast}Rj_{\ast}\cV \ar[r]\ar[d] & \\
  &  &  &
}
\]
of triangles. Continuing in this way, we get an $(n-1)$-dimensional hypercube of triangles, with `top left corner' $w^{n}_{\leq a}\dots w_{\leq a}^{2}Rj_{\ast}\cV = w_{\leq a}Rj_{\ast}\cV$.\footnote{See \cite[Th\'eor\`eme 3.3.5]{Mor08} for the equality in the Grothendieck group that results from this hypercube.} Since $\cV$ is a pure perverse sheaf of weight $a$, $w_{\leq a}Rj_{\ast}\cV = j_{!\ast}\cV$ by \cite[Th\'eor\`eme 3.1.4]{Mor08}. By \cite[Proposition 1.4.5]{Mor10} (whose proof relies of Pink's formula and the theory of weight truncations, so is valid in our setting), the cohomology of the complexes $Ri_{n_{r}\ast}w_{>a}i_{n_{r}}^{\ast}...Ri_{n_{1}\ast}w_{>a}i_{n_{1}}^{\ast}Rj_{\ast}\cV$
for $n_{1}<...<n_{r}$, for any sequence in $\{2,\dots,n\}$, are subquotients of direct sums of ordinary cohomology groups of automorphic complexes for boundary strata of $\Sh_{K}^{\min}$ (subquotients as the boundary strata are finite Galois quotients of Shimura varieties). As the complexes $Ri_{n_{r}\ast}w_{>a}i_{n_{r}}^{\ast}...Ri_{n_{1}\ast}w_{>a}i_{n_{1}}^{\ast}Rj_{\ast}\cV$ for $n_{1}<...<n_{r}$ together with $Rj_{\ast}\cV$ make up the `lower right' $2 \times \dots \times 2$ hypercube and $j_{!\ast}\cV$ sits in the top left corner, the induction step follows by taking long exact sequences of the triangles in the hypercube. This finishes the proof of the claim.
\end{proof}

To conclude, let us note that we would optimistically expect the theorem to hold with no assumptions on the $(G,X)$, and that the proof would proceed along the same lines in an ideal world; we note that the Hodge-theoretic analogue holds by work of Nair \cite{Nai}. At present, we do not know how one could try to remove the assumption of abelian type (unless one replaces $D^{b}(Rep(G))$ with a smaller subcategory, cf. \cite[Proposition 1.3.4]{Mor10}), since the link to geometry seems necessary to prove that the sheaves $\cF^{K}V$ are pure of the expected weight when $V$ is a pure representation. We would naively suspect that it might be possible to remove the other assumptions, but we have not looked into this.

\bibliographystyle{alpha}
\bibliography{Galois}

\begin{thebibliography}{BLGGT14}

\bibitem[ABV92]{Ada92}
Jeffrey Adams, Dan Barbasch, and David~A. Vogan, Jr.
\newblock {\em The {L}anglands classification and irreducible characters for
  real reductive groups}, volume 104 of {\em Progress in Mathematics}.
\newblock Birkh\"auser Boston, Inc., Boston, MA, 1992.

\bibitem[AC89]{Art89a}
James Arthur and Laurent Clozel.
\newblock {\em Simple algebras, base change, and the advanced theory of the
  trace formula}, volume 120 of {\em Annals of Mathematics Studies}.
\newblock Princeton University Press, Princeton, NJ, 1989.

\bibitem[AJ87]{Ada87}
Jeffrey Adams and Joseph~F. Johnson.
\newblock Endoscopic groups and packets of nontempered representations.
\newblock {\em Compositio Math.}, 64(3):271--309, 1987.

\bibitem[Art89]{Art89}
James Arthur.
\newblock Unipotent automorphic representations: conjectures.
\newblock {\em Ast\'erisque}, (171-172):13--71, 1989.
\newblock Orbites unipotentes et repr\'esentations, II.

\bibitem[Art02]{Art02}
James Arthur.
\newblock A note on the automorphic {L}anglands group.
\newblock {\em Canad. Math. Bull.}, 45(4):466--482, 2002.
\newblock Dedicated to Robert V. Moody.

\bibitem[AV16]{Ada16}
Jeffrey Adams and David~A. Vogan, Jr.
\newblock Contragredient representations and characterizing the local
  {L}anglands correspondence.
\newblock {\em Amer. J. Math.}, 138(3):657--682, 2016.

\bibitem[BG14]{Buz14}
Kevin Buzzard and Toby Gee.
\newblock The conjectural connections between automorphic representations and
  {G}alois representations.
\newblock In {\em Automorphic forms and {G}alois representations. {V}ol. 1},
  volume 414 of {\em London Math. Soc. Lecture Note Ser.}, pages 135--187.
  Cambridge Univ. Press, Cambridge, 2014.

\bibitem[BLGGT14]{Bar14}
Thomas Barnet-Lamb, Toby Gee, David Geraghty, and Richard Taylor.
\newblock Potential automorphy and change of weight.
\newblock {\em Ann. of Math. (2)}, 179(2):501--609, 2014.

\bibitem[BM02]{Bre02}
Christophe Breuil and Ariane M\'ezard.
\newblock Multiplicit\'es modulaires et repr\'esentations de {${\rm GL}_2({\bf
  Z}_p)$} et de {${\rm Gal}(\overline{\bf Q}_p/{\bf Q}_p)$} en {$l=p$}.
\newblock {\em Duke Math. J.}, 115(2):205--310, 2002.
\newblock With an appendix by Guy Henniart.

\bibitem[Cal]{Cal}
Frank Calegari.
\newblock Even {G}alois representations.
\newblock Unpublished lecture notes, available at
  \texttt{http://www.math.uchicago.edu/{$\sim$}fcale/papers/FontaineTalk-Adjusted.pdf}.

\bibitem[Car86]{Car86}
Henri Carayol.
\newblock Sur les repr\'esentations {$l$}-adiques associ\'ees aux formes
  modulaires de {H}ilbert.
\newblock {\em Ann. Sci. \'Ecole Norm. Sup. (4)}, 19(3):409--468, 1986.

\bibitem[CHT08]{Clo08}
Laurent Clozel, Michael Harris, and Richard Taylor.
\newblock Automorphy for some {$l$}-adic lifts of automorphic mod {$l$}
  {G}alois representations.
\newblock {\em Publ. Math. Inst. Hautes \'Etudes Sci.}, (108):1--181, 2008.
\newblock With Appendix A, summarizing unpublished work of Russ Mann, and
  Appendix B by Marie-France Vign\'eras.

\bibitem[Clo90]{Clo90}
Laurent Clozel.
\newblock Motifs et formes automorphes: applications du principe de
  fonctorialit\'e.
\newblock In {\em Automorphic forms, {S}himura varieties, and {$L$}-functions,
  {V}ol.\ {I} ({A}nn {A}rbor, {MI}, 1988)}, volume~10 of {\em Perspect. Math.},
  pages 77--159. Academic Press, Boston, MA, 1990.

\bibitem[Clo91]{Clo91}
Laurent Clozel.
\newblock Repr\'esentations galoisiennes associ\'ees aux repr\'esentations
  automorphes autoduales de {${\rm GL}(n)$}.
\newblock {\em Inst. Hautes \'Etudes Sci. Publ. Math.}, (73):97--145, 1991.

\bibitem[Clo93]{Clo93}
Laurent Clozel.
\newblock On the cohomology of {K}ottwitz's arithmetic varieties.
\newblock {\em Duke Math. J.}, 72(3):757--795, 1993.

\bibitem[Del71]{Del71}
Pierre Deligne.
\newblock Formes modulaires et repr\'esentations {$l$}-adiques.
\newblock In {\em S\'eminaire {B}ourbaki. {V}ol. 1968/69: {E}xpos\'es
  347--363}, volume 175 of {\em Lecture Notes in Math.}, pages Exp.\ No.\ 355,
  139--172. Springer, Berlin, 1971.

\bibitem[Del79a]{Del79a}
P.~Deligne.
\newblock Valeurs de fonctions {$L$}\ et p\'eriodes d'int\'egrales.
\newblock In {\em Automorphic forms, representations and {$L$}-functions
  ({P}roc. {S}ympos. {P}ure {M}ath., {O}regon {S}tate {U}niv., {C}orvallis,
  {O}re., 1977), {P}art 2}, Proc. Sympos. Pure Math., XXXIII, pages 313--346.
  Amer. Math. Soc., Providence, R.I., 1979.
\newblock With an appendix by N. Koblitz and A. Ogus.

\bibitem[Del79b]{Del79}
Pierre Deligne.
\newblock Vari\'et\'es de {S}himura: interpr\'etation modulaire, et techniques
  de construction de mod\`eles canoniques.
\newblock In {\em Automorphic forms, representations and {$L$}-functions
  ({P}roc. {S}ympos. {P}ure {M}ath., {O}regon {S}tate {U}niv., {C}orvallis,
  {O}re., 1977), {P}art 2}, Proc. Sympos. Pure Math., XXXIII, pages 247--289.
  Amer. Math. Soc., Providence, R.I., 1979.

\bibitem[Del82]{Del82}
Pierre Deligne.
\newblock Hodge cycles on abelian varieties.
\newblock In {\em Hodge cycles, motives and {S}himura varieties}, volume 900 of
  {\em Lecture Notes in Math.}, pages 9--100. Springer-Verlag, Berlin, 1982.

\bibitem[Fal88]{Fal88}
Gerd Faltings.
\newblock {$p$}-adic {H}odge theory.
\newblock {\em J. Amer. Math. Soc.}, 1(1):255--299, 1988.

\bibitem[FM95]{Fon95}
Jean-Marc Fontaine and Barry Mazur.
\newblock Geometric {G}alois representations.
\newblock In {\em Elliptic curves, modular forms, \& {F}ermat's last theorem
  ({H}ong {K}ong, 1993)}, Ser. Number Theory, I, pages 41--78. Int. Press,
  Cambridge, MA, 1995.

\bibitem[Gro]{Gro}
Benedict Gross.
\newblock Odd galois representations.
\newblock Unpublished note, available online at
  \texttt{http://www.math.harvard.edu/{$\sim$}gross/preprints/{G}alois\_{R}ep.pdf}.

\bibitem[HLTT16]{Har16}
Michael Harris, Kai-Wen Lan, Richard Taylor, and Jack Thorne.
\newblock On the rigid cohomology of certain {S}himura varieties.
\newblock {\em Res. Math. Sci.}, 3:Paper No. 37, 308, 2016.

\bibitem[HT01]{Har01}
Michael Harris and Richard Taylor.
\newblock {\em The geometry and cohomology of some simple {S}himura varieties},
  volume 151 of {\em Annals of Mathematics Studies}.
\newblock Princeton University Press, Princeton, NJ, 2001.
\newblock With an appendix by Vladimir G. Berkovich.

\bibitem[Hub97]{Hub97}
Annette Huber.
\newblock Mixed pervese sheaves for schemes over number fields.
\newblock {\em Compositio Math.}, 108(1):107--121, 1997.

\bibitem[Joh13]{Joh13}
Christian Johansson.
\newblock A remark on a conjecture of {B}uzzard-{G}ee and the cohomology of
  {S}himura varieties.
\newblock {\em Math. Res. Lett.}, 20(2):279--288, 2013.

\bibitem[KMSW]{Kal17}
Tasho Kaletha, Alberto Minguez, Sug-Woo Shin, and Paul-James White.
\newblock Endoscopic classification of representations: Inner forms of unitary
  groups.
\newblock Preprint, available online at
  \texttt{https://arxiv.org/abs/1409.3731}.

\bibitem[Kot90]{Kot90}
Robert~E. Kottwitz.
\newblock Shimura varieties and {$\lambda$}-adic representations.
\newblock In {\em Automorphic forms, {S}himura varieties, and {$L$}-functions,
  {V}ol.\ {I} ({A}nn {A}rbor, {MI}, 1988)}, volume~10 of {\em Perspect. Math.},
  pages 161--209. Academic Press, Boston, MA, 1990.

\bibitem[Kot92]{Kot92}
Robert~E. Kottwitz.
\newblock On the {$\lambda$}-adic representations associated to some simple
  {S}himura varieties.
\newblock {\em Invent. Math.}, 108(3):653--665, 1992.

\bibitem[Lan79]{Lan79}
R.~P. Langlands.
\newblock Automorphic representations, {S}himura varieties, and motives. {E}in
  {M}\"archen.
\newblock In {\em Automorphic forms, representations and {$L$}-functions
  ({P}roc. {S}ympos. {P}ure {M}ath., {O}regon {S}tate {U}niv., {C}orvallis,
  {O}re., 1977), {P}art 2}, Proc. Sympos. Pure Math., XXXIII, pages 205--246.
  Amer. Math. Soc., Providence, R.I., 1979.

\bibitem[Mil83]{Mil83}
J.~S. Milne.
\newblock The action of an automorphism of {${\bf C}$}\ on a {S}himura variety
  and its special points.
\newblock In {\em Arithmetic and geometry, {V}ol. {I}}, volume~35 of {\em
  Progr. Math.}, pages 239--265. Birkh\"auser Boston, Boston, MA, 1983.

\bibitem[Mor]{Mor}
Sophie Morel.
\newblock Complexes mixtes sure un schema de type fini sur {Q}.
\newblock {\em Available at https://web.math.princeton.edu/~smorel/}.

\bibitem[Mor06]{Mor06}
Sophie Morel.
\newblock {\em Complexes d'intersection des compactifications de
  {B}aily-{B}orel. {L}e cas des groupes unitaires sur {$\mathbb{Q}$}}.
\newblock PhD thesis, Universit\'e Paris-Sud XI - Orsay, 2006.

\bibitem[Mor08]{Mor08}
Sophie Morel.
\newblock Complexes pond\'er\'es sur les compactifications de {B}aily--{B}orel:
  le cas de vari\'eti\'es de {S}iegel.
\newblock {\em J. Amer. Math. Soc.}, 21(1):23--61, 2008.

\bibitem[Mor10]{Mor10}
Sophie Morel.
\newblock {\em On the cohomology of certain noncompact {S}himura varieties.
  With an appendix by {R}obert {K}ottwitz.}, volume 173 of {\em Annals of
  Mathematics Studies}.
\newblock Princeton University Press, Princeton, NJ, 2010.

\bibitem[Mor11]{Mor11}
Sophie Morel.
\newblock Intersection cohomology is useless.
\newblock {\em Talk at the Institute for Advanced Study,
  https://video.ias.edu/graf/2011/03/22/morel}, 2011.

\bibitem[Nai]{Nai}
Arvind Nair.
\newblock Mixed structures in {S}himura varieties and automorphic forms.
\newblock {\em Available at http://www.math.tifr.res.in/~arvind/}.

\bibitem[Pin92]{Pin92}
Richard Pink.
\newblock On $\ell$-adic sheaves on {S}himura varieties and their higher direct
  images in the {B}aily--{B}orel compactification.
\newblock {\em Math. Ann.}, 292(2):197--240, 1992.

\bibitem[Sch15a]{Sch16}
Peter Scholze.
\newblock On torsion in the cohomology of locally symmetric varieties.
\newblock {\em Ann. of Math. (2)}, 182(3):945--1066, 2015.

\bibitem[Sch15b]{Sch15}
Peter Scholze.
\newblock On torsion in the cohomology of locally symmetric varieties.
\newblock {\em Ann. of Math. (2)}, 182(3):945--1066, 2015.

\bibitem[Ser94]{Ser94}
Jean-Pierre Serre.
\newblock Propri\'et\'es conjecturales des groupes de {G}alois motiviques et
  des repr\'esentations {$l$}-adiques.
\newblock In {\em Motives ({S}eattle, {WA}, 1991)}, volume~55 of {\em Proc.
  Sympos. Pure Math.}, pages 377--400. Amer. Math. Soc., Providence, RI, 1994.

\bibitem[Ser12]{Ser12}
Jean-Pierre Serre.
\newblock {\em Lectures on {$N_X (p)$}}, volume~11 of {\em Chapman \& Hall/CRC
  Research Notes in Mathematics}.
\newblock CRC Press, Boca Raton, FL, 2012.

\bibitem[Tat79]{Tat79}
J.~Tate.
\newblock Number theoretic background.
\newblock In {\em Automorphic forms, representations and {$L$}-functions
  ({P}roc. {S}ympos. {P}ure {M}ath., {O}regon {S}tate {U}niv., {C}orvallis,
  {O}re., 1977), {P}art 2}, Proc. Sympos. Pure Math., XXXIII, pages 3--26.
  Amer. Math. Soc., Providence, R.I., 1979.

\bibitem[Tb16]{Tai16}
Olivier Ta\"\i~bi.
\newblock Eigenvarieties for classical groups and complex conjugations in
  {G}alois representations.
\newblock {\em Math. Res. Lett.}, 23(4):1167--1220, 2016.

\bibitem[vGT94]{Gee94}
Bert van Geemen and Jaap Top.
\newblock A non-selfdual automorphic representation of {${\rm GL}_3$} and a
  {G}alois representation.
\newblock {\em Invent. Math.}, 117(3):391--401, 1994.

\bibitem[Vog84]{Vog84a}
David~A. Vogan, Jr.
\newblock Unitarizability of certain series of representations.
\newblock {\em Ann. of Math. (2)}, 120(1):141--187, 1984.

\bibitem[VZ84]{Vog84}
David~A. Vogan, Jr. and Gregg~J. Zuckerman.
\newblock Unitary representations with nonzero cohomology.
\newblock {\em Compositio Math.}, 53(1):51--90, 1984.

\end{thebibliography}

\end{document}